\documentclass[12pt,leqno,dvipdfmx]{amsart} 
\usepackage{euscript, amssymb, amsmath, amsthm}
\usepackage{epsfig}
\usepackage{graphicx}
\usepackage{caption}
\usepackage{longtable}
\usepackage{dcolumn}
\usepackage{color}

\usepackage{mathtools}
\mathtoolsset{showonlyrefs=true}

\usepackage{amsfonts}

\newcommand\smallO[1]{
	\mathchoice
		{
			\ensuremath{\mathop{}\mathopen{}{\scriptstyle\mathcal{O}}\mathopen{}\left(#1\right)}
		}
		{
			\ensuremath{\mathop{}\mathopen{}{\scriptstyle\mathcal{O}}\mathopen{}\left(#1\right)}
		}
		{
			\ensuremath{\mathop{}\mathopen{}{\scriptscriptstyle\mathcal{O}}\mathopen{}\left(#1\right)}
		}
		{
			\ensuremath{\mathop{}\mathopen{}{o}\mathopen{}\left(#1\right)}
		}
}

\usepackage{setspace}
\usepackage[most]{tcolorbox}
\definecolor{webred}{rgb}{0.75,0,0}
\definecolor{webgreen}{rgb}{0,0.75,0}
\usepackage[colorlinks=true, citecolor=blue]{hyperref}

\numberwithin{equation}{section}

\newtheorem{theo}{Theorem}[section]
\newtheorem{lem}{Lemma}[section]
\newtheorem{prop}[theo]{Proposition}
\newtheorem{Def}[theo]{Definition}
\theoremstyle{remark}
\newtheorem{rem}{Remark}[section]

\newcommand{\f}{\Omega}
\newcommand{\g}{\Gamma}

\def\y{{\mathrm{d}}y}

\def\R{{\mathbb{R}}}
\def\N{{\mathbb{N}}}

\def\E{{\mathbb{E}}}
\def\LL{{R_0(t)}}
\def\L{{R(t)+1}}

\tcbset{
    frame code={},
    center title,
    left=0pt,
    right=0pt,
    top=0pt,
    bottom=0pt,
    colback=gray!10,
    colframe=white,
    width=\dimexpr\textwidth\relax,
    enlarge left by=0mm,
    boxsep=5pt,
    arc=0pt,outer arc=0pt,
}

\begin{document}

\title{Asymptotic profiles for the Cauchy problem of semilinear  beam equation with two variable coefficients in the subcritical case}

\author[M. A. Hamza]{Mohamed Ali Hamza${}^{1}$}
\thanks{${}^{1}$
Basic Sciences Department,
Deanship of Preparatory Year and Supporting Studies,
P. O. Box 1982, Imam Abdulrahman Bin Faisal University,
Dammam, KSA%
}
\email{mahamza@iau.edu.sa}

\author[Y. Wakasugi]{Yuta Wakasugi${}^2$}
\thanks{${}^2$
Laboratory of Mathematics,
Graduate School of Advanced Science and Engineering,
Hiroshima University,
Higashi-Hiroshima, 739-8527, Japan%
}
\email{wakasugi@hiroshima-u.ac.jp}

\author[S. Yoshikawa]{Shuji Yoshikawa${}^2$}
\email{s-yoshikawa@hiroshima-u.ac.jp}
\date{\today}

\medskip

%
%
%
%

\pagestyle{plain}


\maketitle
\begin{abstract}
In this article, we investigate the asymptotic profile of solutions to
the Cauchy problem for a nonlinear beam equation with two variable
coefficients in the subcritical nonlinear case.
In contrast to our previous result \cite{our1}, in which the asymptotic profile
is governed by the linear heat kernel and the nonlinear effect is
asymptotically negligible, the asymptotic profile in the present setting
is described by a self-similar solution to the associated nonlinear
parabolic equation (constructed in Brezis-Peletier-Terman \cite{BPT}).
The proof relies on delicate energy estimates in weighted spaces
formulated in parabolic self-similar variables.
\end{abstract}


\section{Introduction  and statement of the main result}
We consider the semilinear beam equation with two variable coefficients
\begin{equation}
\label{A}
	\partial_{tt}u +b(t) \partial_{t} u -a(t) \partial_{xx}u + \partial_{xxxx}u=-|u|^{p-1}u, 
	\quad  t\in (0,\infty), \ x\in \R,
\end{equation}
where
$u= u(t,x)$
is a real-valued unknown, $a(t)$ and $b(t)$
are positive functions of $t$,
and $p>1$. 

The equation corresponds to the damped fourth order wave equation with two variable coefficients. 
In \cite{le}, Levandosky studies the decay estimate for the Cauchy problem for the fourth order wave equation 
\[
\partial_{tt} u + \Delta^2 u + u = f(u), \qquad  |f'(u)| \lesssim |u|^{p-1}. 
\]
From the physical point of view, most of the nonlinear terms for the semilinear beam equation take the derivative nonlinear term 
$\partial_x f(\partial_x u)$
because in nonlinear elasticity the stress is usually expressed as a function of the strain.   
The nonlinear term of the Boussinesq type equation is
$\partial_{xx} f(u)$ (e.g. \cite{fa-gr}),
which also includes the derivative. 
For this reason, in the authors' previous result \cite{our1}, the problem with the derivative nonlinear term
$\partial_x (|\partial_x u|^{p})$
was studied, but the critical power does not appear there, as is also easily checked by the scaling argument. 
On the other hand, as mentioned later, it is interesting to investigate the problem with the nonlinearity 
$f(u)$, because the critical power from the scaling argument appears in the case. 
From the mathematical point of view, we investigate the problem \eqref{A}. 

The purpose of this paper is to study the asymptotic behavior of the solution to the equation \eqref{A}.
Evidently, the asymptotic behavior of the solution
will change depending on the coefficients $a(t)$ and $b(t)$, and the parameter $p$.
In particular, in this paper, we investigate the case where
the asymptotic properties of the solution are determined by the corresponding nonlinear parabolic regime.

To fix our setting, we introduce the assumptions on the coefficients
$a(t)$ and $b(t)$, and the power $p$ in the following.

First, we assume that the coefficients $a(t)$ and $b(t)$ 
are smooth positive functions
satisfying
\begin{equation}\label{ab}
|a(t)  -(1+t )^{\alpha}|\le C (1+t)^{\alpha-1}, \quad 
 |b(t)-  b_0(1+t )^{\beta}|\le C(1+t)^{\beta-1},
 \end{equation}
  for some   $\alpha, \beta \in \R$ and $b_0>0$.
Further, we assume 
  that  their derivatives  $a'$ and $b'$
  satisfy  the conditions:
\begin{equation}\label{H1} 
	\big|a'(t)\big| \le  C(1+t)^{\alpha-1}\qquad \text{and} \qquad  \big|b'(t)\big| \le   C (1+t)^{\beta-1}.
\end{equation}

Under these assumptions on $a(t)$ and $b(t)$,
it is proved in \cite{YW1} that
for the linearized problem of \eqref{A} and for the parameters
$(\alpha, \beta)$ satisfying
$-1<\beta<\min(2\alpha +1, \alpha+1)$,
the asymptotic behavior of the solution is described by the corresponding parabolic equation
$b_0 t^{\beta} \partial_t v - t^{\alpha} \partial_{xx} v = 0$.
Moreover, the other parameter region for $(\alpha, \beta)$ was classified into four regions
corresponding to wave-type, beam-type, fourth-order heat-type, and overdamping-type asymptotic behaviors.

In this paper, we consider the case
$-1<\beta<\min(2\alpha +1, \alpha+1)$,
where the expected asymptotic behavior of the linear part
is given by the associated parabolic equation:

\noindent
\textbf{Assumption (I)}
 \begin{equation*}
 (\alpha,\beta)\in \mathcal{A} :=\{ (\alpha,\beta)\in \R^2; -1<\beta< \min (2\alpha+1, \alpha+1)\}.
 \end{equation*}

For readers' convenience, we explain the meaning of this assumption
by giving a formal scaling argument, which indicates the asymptotic behavior of the solution to \eqref{A}
for $(\alpha, \beta) \in \mathcal{A}$.
In order to get  the leading terms,
we first replace
$a(t)$ and $b(t)$ in the linearized equation of \eqref{A}
with their asymptotic equivalents as $t\to \infty$ given in \eqref{ab}.
Then, we formally obtain
\begin{equation}
\label{A123L}
	\partial_{tt}u+b_0t^{\beta} \partial_{t} u -t ^{\alpha} \partial_{xx}u + \partial_{xxxx}u = 0.
\end{equation}
Now, for $\lambda>0$, we define
\begin{equation}\label{transf}
	u(t,x)=v(\lambda^{\nu} t,\lambda x), \qquad \lambda^{\nu} t=s, \quad \lambda x=y,
\end{equation}
for some $\nu>0$.
Clearly, if $u$ is a solution of  the associated  linear problem  to \eqref{A123L}, then $v$ satisfies
\begin{equation}
\label{A123bbL}
	\lambda^{\nu (1+\beta)}\partial_{ss}v+b_0s^{\beta} \partial_{s} v
	-\lambda^{2-(\alpha-\beta+1)\nu}s^{\alpha} \partial_{yy}v
	+\lambda^{4-\nu+\beta \nu} \partial_{yyyy}v
	=0.
\end{equation}
Thus, if we set $\nu=\frac2{\alpha-\beta+1}$ and
use the condition $(\alpha, \beta) \in \mathcal{A}$,
then we infer
$1+\beta >0$ and $4+\nu(\beta-1)=\frac{2(1+2\alpha-\beta)}{\alpha-\beta+1}>0$.
Therefore, by letting $\lambda\rightarrow 0$, 
we keep only the leading order terms, namely   
\begin{equation}\label{b0}
	b_0s^{\beta} \partial_{s} v -s^{\alpha} \partial_{yy}v =0.
\end{equation}
The kernel of the above equation is given by
\begin{equation}
K(s,y)= \frac{1}{\sqrt{4\pi R_0(s)}} \exp \left( - \frac{y^{2}}{4R_0(s)} \right),
\end{equation}
where
\begin{equation}\label{transf0}
 R_0(s):=\frac{s^{\alpha-\beta+1}}{b_0(\alpha-\beta+1)},
\end{equation}
and it can be easily seen that
$K(s,y)$ gives the asymptotic profile of solutions to the equation \eqref{b0}.

By the modification that
$u(t,x)=\lambda^{\zeta} v(\lambda^{\nu} t,\lambda x)$ in \eqref{transf}
with appropriately chosen $\zeta>0$,
we can include the nonlinear term $-|u|^{p-1}u$ in the above scaling argument.
If the nonlinearity is supercritical, that is,
$p>1+\frac{2(1-\beta)}{\alpha-\beta+1}$,
then one can see that the nonlinear term is still negligible and the same equation as \eqref{b0} is obtained.
This also suggests that the asymptotic profile is still given by $K(s,y)$ in the supercritical case.
Indeed, in our previous result \cite{our1}, the equation \eqref{A} with the derivative nonlinearity
$\partial_x (|\partial_x u|^{p-1} \partial_x u)$ was studied,
and the asymptotic profile is determined by $K(s,y)$.
The equation \eqref{A} can be handled in the same way as \cite{our1} in the supercritical case
$p>1+\frac{2(1-\beta)}{\alpha-\beta+1}$,
and we see that the asymptotic profile is given by $K(s,y)$.

However, in the subcritical case, the situation is different,
and the nonlinear effect should be involved in describing the asymptotic behavior. 
The aim of the present paper is to deal with the subcritical case, namely, $p$ satisfies:

\noindent
\textbf {Assumption (II)}
\begin{equation}\label{scritical}
1<p<1+\frac{2(1-\beta)}{\alpha-\beta+1}.
\end{equation}

Now, we construct a particular solution of the corresponding semilinear parabolic equation obtained by
substituting $a(t)$ and $b(t)$ by their equivalent forms thanks to \eqref{ab} and ignoring the lower terms
$\partial_{tt}u$ and 
$ \partial_{xxxx}u$
in \eqref{A}.
More precisely, we consider the equation:
\begin{equation}
\label{A1}
b_0(1+t )^{\beta} \partial_{t} u -(1+t )^{\alpha}
\partial_{xx}u =-|u|^{p-1}u, 
\quad t \in (0,\infty), \ x \in  \R.
\end{equation}
 We define
\begin{equation}\label{transf0}
	V(T,x)=u(t,x), 
	\quad
	\text{where} \quad
	T:=R_0(t+1),
\end{equation}
where the function $R_0$ is defined in \eqref{transf0}.
It is easy to verify that 
if $u(t,x)$ is a solution  of \eqref{A1}, then $V(T,x)$ satisfies
\begin{equation}\label{A2}
	\partial_{T} V - \partial_{xx}V +\frac{|V|^{p-1}V}{\Big(b_0(\alpha-\beta+1)T\Big)^{\alpha/(\alpha-\beta+1)}}=0, 
	\quad T\in [R_0(1),\infty), \ x\in \R.
\end{equation}
A self-similar  solution of the equation \eqref{A2} is given by
\[
	V(T,x)=\frac{A_0}{T^{\theta} }\f \left( \frac{x}{\sqrt{T}} \right),
\]
where 
\begin{equation}\label{A0}
\theta :=\frac{1-\beta}{(1-\beta+\alpha)(p-1)},
\qquad
A_0:= (b_0(\alpha-\beta +1))^{\frac{\alpha}{(\alpha-\beta+1)(p-1)}},
\end{equation}
and  $\f$ satisfies
the following  ordinary differential equation
\begin{equation}\label{1.4}
	\f^{''}(z)+\frac{z}{2}\f^{'}(z)+\theta  \f(z)
	-|\f(z)|^{p-1}\f(z)=0,
	\quad z \in \R.
\end{equation}
Thanks to \cite[Theorems 3 and 4]{BPT}, there exists a positive solution of \eqref{1.4}.
More precisely, for all $c_0>0$
there exists a smooth, even, and  positive
solution $\f$
 of the equation (\ref{1.4}) such that  $\lim_{|z|\to \infty}|z|^{2\theta}\f (z)
=c_0$ and  $\f'(0)=0$
(see Lemma \ref{L3bis1} for more detailed behavior of $\Omega$).

Throughout the paper,
we denote
\begin{equation}\label{def:gamma}
\g :=\g(t,x)=\frac{A_0}{{(\LL)}^ {\theta }} \f \left( \frac{x}{\sqrt \LL} \right).
\end{equation}

\par
If $(\alpha, \beta)$ and $p$ satisfy \textbf{Assumptions (I), (II)}, 
then $\g$ is expected to be an asymptotic solution of \eqref{A}.
Although $\g$ is not a solution of equation (\ref{A}), we can study its asymptotic stability in the following sense.
Let $t_0>1$ be a fixed, sufficiently large real number that will be chosen later;
for the initial data
$(u(t_0,x),\partial_tu (t_0,x))$ near
$(\g(t_0,x),\partial_{t}
\g(t_0,x))$,
 the corresponding solution $u(t,x)$ of equation
(\ref{A}) converges to
 $\g(t,x)$
 in an appropriate norm,
 when $t \to +\infty$.

To study the asymptotic behavior of the solutions of equation \eqref{A}
around $\Gamma(t,x)$ defined above,
we introduce the following scaling variables:
\begin{equation}\label{scaling0}
y=\frac{x}{\sqrt{\L}}, \qquad   \textrm{and} \qquad s=\log(\L),
\end{equation}
where 
\begin{equation}\label{defR}
 R(t)=\int_0^tr(\tau) \,\mathrm{d}\tau, \qquad r(t)=\frac{a(t)}{b(t)}.\ \  
\end{equation}
Writing the equation \eqref{A} as a first-order  system in the variable  $(u,\partial_tu)$ and rescaling
the two components independently, we are  in position  to set,
\begin{align}\label{scaling1}
	u(t,x) &= \frac{A_0}{(\L)^{\theta }} v \left( \log(\L), \frac{x}{\sqrt{\L}} \right),\\
	\partial_tu(t,x) &= \frac{A_0r(t)}{(\L)^{\theta +1 }}w \left( \log(\L), \frac{x}{\sqrt{\L}} \right),
\end{align}
A straightforward computation implies  that $u$  is a solution of \eqref{A} if and only if $(v,w)$
satisfies the following:
\begin{equation}
\label{vw} \left\lbrace
\begin{alignedat}{1}
	&w = v_s-\frac{y}2v_y-\theta  v,\\
	&\frac{r^2e^{-s}}{a}\Big(w_s-\frac{y}2w_y-(\theta +1)  w\Big)+\frac{r'}{a}w
	+w-v_{yy}+\frac{e^{-s}}{a}v_{yyyy}=-|v|^{p-1}v+\varepsilon(t)|v|^{p-1}v,
\end{alignedat}
\right.
\end{equation}
where
$\varepsilon(t)$
is
\begin{equation}\label{varepsilon}
\varepsilon (t)=1-\frac{(b_0(\alpha-\beta+1))^{\frac{\alpha}{\alpha-\beta+1}}(1+R(t))^{\frac{\alpha}{\alpha-\beta+1}}}{a(t)}
= \mathcal{O} \left( (1+t)^{-1} \right)
\end{equation}
under hypothesis \eqref{ab},
which means that the term
$\varepsilon(t) |v|^{p-1}v$
can be regarded as a perturbation.

In order to study the asymptotic stability of $\Gamma(t,x)$
for the equation \eqref{A},
we further rewrite the system of $(v,w)$ into that of the perturbations around $\Omega(y)$.
To this end, we introduce
\begin{equation}\label{1.14}
	f(s,y)=v(s,y) -\f(y) \quad \text {and} \quad g(s,y) =w(s,y) +\theta \f(y) +\frac{y}{2}\f' (y).
\end{equation}
Then, $f$ and $g$ satisfy the equations
\begin{equation}\label{scaling}
	\left\{
	\begin{alignedat}{1}
	&g=f_{s}-\frac{y}{2}f_{y}-\theta  f,\\
	&\frac{r^2e^{-s}}{a}\Big(g_s-\frac{y}2g_y-(\theta  +1) g\Big)+\frac{r'}{a}g
		+f_s=-\frac{e^{-s}}{a}f_{yyyy}+{\mathcal  L}(f)+{\mathcal N}(f)+h,
	\end{alignedat}
	\right.
\end{equation}
where
\begin{equation}\label{1.17}
	\begin{alignedat}{3}
	{\mathcal L}(f)&:=
		f_{yy}+\frac{y}{2}f_{y} +\theta  f-p\f^{p-1}f, 
	\\
	{\mathcal N}(f) &:=
		-\left( |f+\f|^{p-1}(f+ \f) - \f^{p}(y)-p \f^{p-1}f \right),
	\\
	h(s,y) &:=
		h_1(s,y)+h_2(s,y),\\
	h_1(s,y) &:=
		- \frac{r^2e^{-s}}{a}
			\left(
				(\theta +1)(\theta \f +{\frac{y}{2}}\f')+\frac{y}{2}((\theta +\frac{1}{2})\f'+\frac{y}{2}\f'')
			\right)
		\\
	&\quad +\frac{r'}{a}(\theta \f +\frac{y}{2}\f')-\frac{e^{-s}}{a}\f'''',
	\\
	h_2(s,y) &:=
		+\varepsilon (t)|f+ \f|^{p-1}(f+ \f).
	\end{alignedat}
\end{equation}
At this stage, the asymptotic stability of $\Gamma(t,x)$
reduces to proving that
$(f,g)$ decay to zero as $s\to \infty$ with small initial data at $s=s_0$
in an appropriate space.

We now  introduce the Hilbert spaces in which we are going to study our problem (\ref{scaling}).
For $\ell\ge 1$, we define $L^{2,\ell}(\R)$ as the weighted
Lebesgue space:
\[
	L^{2,\ell}(\R)=\Big\{u\in L^2(\R), (1+|y|^{ \ell})u\in L^2(\R)\Big\},
	\quad
	\|u\|^2_{L^{2, \ell}(\R)}=\int_{\R} (1+|y|^{2 \ell}) u^2 \, \mathrm{d}y.
\]
Also, for $k\in \N$, we define  the following weighted Sobolev spaces:
\[
	H^{k, \ell}(\R)=\Big\{u\in  L^{2, \ell}(\R),\ \ \partial^k_yu\in  L^{2, \ell}(\R)\Big\},
	\quad
	\|u\|^2_{ H^{k, \ell}(\R)}=\sum_{i=0}^k\|\partial_y^iu\|^2_{ L^{2, \ell}(\R)}.
\]
We sometimes use the notation $H^k(\mathbb{R}) := H^{k,0}(\mathbb{R})$.
In particular, we set
$X^{\ell}(\R)=H^{2,\ell}(\R)\cap H^{3,\ell-1}(\R)$
and
$\ Y^{\ell}(\R)=H^{0,\ell}(\R)\cap H^{1,\ell -1}(\R)$.
We define the product space
$Z^{\ell}(\R)=X^{\ell}(\R)\times Y^{\ell}(\R)$ 
endowed with the natural norm:
\begin{equation}
\|(v,w)\|^{2}_{Z^{\ell}(\R)}=\|v\|^{2}_{X^{\ell}(\R)} + \|w\|^{2}_{Y^{\ell}(\R)}.
\end{equation}

To study the system \eqref{scaling},
we work on the estimate of solutions in the space $Z^1(\mathbb{R})$
by applying the weighted energy method.
Before stating our main result,
we have to impose one more assumption on the parameters $\alpha, \beta, p$.

\noindent  
\textbf{Assumption (III)}
\begin{equation*}
\begin{cases}
	1+\frac{4(1-\beta )}{3(\alpha-\beta+1)}<p <\frac1{1-\frac{4(1-\beta )}{3(\alpha-\beta+1)}}
 	& \text{if} \quad \theta(p-1)< \frac14,\\
	1+\frac{4(1-\beta )}{3(\alpha-\beta+1)}<p 
	&\text{otherwise} .
\end{cases}
\end{equation*}
Although \textbf{Assumption (III)} is technical, its restrictions appear to be necessary.
Indeed, the left-hand inequalities of
\textbf{Assumption (III)}
are equivalent to $\theta < \frac{3}{4}$,
which, as we will see later in Remark \ref{rem:spec},
guarantees that the essential spectrum of $\mathcal{L}_{\infty}$ lies strictly in the left half-plane:
$\sigma_{\mathrm{ess}}(\mathcal{L}_{\infty}) \subset
\left\{ z \in \mathbb{C} \;\middle|\; \text{Re}(z) \le -\tilde \lambda\ \right\}$,
for some $\tilde \lambda>0$.
While we do not employ spectral methods directly,
the restriction $\theta < \frac{3}{4}$ is nevertheless essential for the analysis in the subsequent section
(see \eqref{eq:M}).
Specifically, it plays a crucial role in establishing
the required decay estimates via the energy method
(see the statement of Theorem \ref{T1}).
Regarding the right-hand inequality,
the asymptotic behavior of the function $\Omega$ (see Lemma \ref{L3bis1})
implies that this restriction is essential for ensuring that the remainder term $h$,
defined in \eqref{1.17}, belongs to $Y^1(\mathbb{R})$.

\begin{rem}
We summarize \textbf{Assumptions (I), (II), (III)}.
\begin{itemize}
\item \textbf{Assumption (I)}
means that the leading part of the linearized equation of \eqref{A} is given by
the corresponding parabolic equation \eqref{b0}.
\item \textbf{Assumption (II)} implies that the nonlinearity is subcritical and
the asymptotic profile should involve the nonlinear effect.
\item \textbf{Assumption (III)} is a technical condition
used to control remainder terms and to yield the exponential decay of the final energy.
\end{itemize}
\end{rem}

Finally, for all $s>s_0$, 
it will be convenient to also 
 introduce the quadratic form
\begin{align}\label{defphi}
\Phi (s,f(s),g(s)) &= \int_{\R}(1+y^{ 2})
(f^2+f_y^{2}) \y+ \int_{\R}
f_{yy}^{2} \y \\
\nonumber
&\quad
+\frac{e^{-s}}{a} \int_{\R}(y^{ 2} f_{yy}^{2} +f_{yyy}^{2}) \y 
+\frac{r^2e^{-s}}{a} \int_{\R}(1+y^{ 2})g^{2}  \y \\
&\quad
+\frac{r^2e^{-s}}{a} \int_{\R}g_{y}^{2}  \y.
\end{align}

We are now ready to state our main theorem in this paper.
\begin{theo}\label{T1}
Under \textbf{Assumptions (I), (II), (III)}, there exist $s_{0}>0$ and $\delta_{0}>0$,
such that,  for all $
(f_{0},g_{0}) \in Z^1(\R) $ with $
\|(f_0,g_0)\|_{Z^{1}(\R)} \le{\delta}_{0} $,
the equation (\ref{scaling}) has a unique solution
$(f,g)\in 
  {\mathcal C}([s_0,+\infty),Z^{1}(\R) )$
satisfying $ (f(s_{0}),g(s_{0}))=(f_{0},g_{0}) $.
Moreover, there exist $ \mu_{0}>0 $
and $C>0$, such that, for all $s\ge s_0$,
\begin{align}\label{P123}
	&\Phi (s,f(s),g(s))
	+ \int_{s_0}^s e^{-\mu_0(s-\tau)}
		\left( \|g(\tau)\|_{L^{2,1}}^2 +\|f_{yy}(\tau)\|_{L^2}^2 +\|g_y(\tau)\|_{L^2}^2 \right)
		\,\mathrm{d}\tau \\
	&\quad \le
	C \left( \Phi (s_0,f(s_0),g(s_0)) + e^{-\mu_0 s_0} \right) e^{-\mu_{0}(s-s_{0})}.
\end{align}
\end{theo}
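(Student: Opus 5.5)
We will prove Theorem \ref{T1} by a weighted energy method in the self-similar variables, combined with a standard local existence and continuation argument. Local well-posedness of \eqref{scaling} in $Z^1(\R)$ on some interval $[s_0,s_0+\tau]$ follows from the original equation \eqref{A}: it is a semilinear damped beam equation, and $H^3\times H^1$ data with polynomial weights give a unique local solution by the Duhamel formula and a contraction argument. Here we use that $p>1$, so $|u|^{p-1}u$ is $C^1$ and locally Lipschitz in $H^2\subset L^\infty$. The weights are propagated by multiplying by $y^2$ and using finite-speed-free weighted estimates. Global existence and the decay \eqref{P123} will then follow from an a priori bound, via a bootstrap on $\Phi(s)\le 2C\delta_0^2+Ce^{-\mu_0 s_0}$.

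The core is a Lyapunov-type functional. We build $\mathcal{E}(s)$ as a combination of several identities:
\begin{itemize}
\item multiply the second equation of \eqref{scaling} by $(1+y^2)g$;
\item multiply the equation differentiated in $y$ by $g_y$, and the equation differentiated twice by $f_{yy}$-type terms, to reach the $e^{-s}a^{-1}f_{yyy}^2$ part of $\Phi$;
\item multiply by $(1+y^2)f$ and $f$ with small coefficients.
\end{itemize}
Each uses the first equation $f_s=g+\frac y2 f_y+\theta f$. The term $\mathcal L(f)$ tested against $(1+y^2)g$ produces $-\frac12\frac{d}{ds}\int(1+y^2)(f_y^2+(p\Omega^{p-1}-\theta)f^2)$ plus transport terms. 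Integrating $\frac y2 f_y$ by parts with weight $1+y^2$ gives coercive contributions with coefficients involving $\frac14,\frac34$ minus $\theta$.

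This is where $\theta<\frac34$ (Assumption (III)) enters, together with $p\Omega^{p-1}\ge0$ and its decay $\Omega^{p-1}\sim |y|^{-2\theta(p-1)}$ from Lemma \ref{L3bis1}. Together they give a differential inequality
\[
\frac{d}{ds}\mathcal E+2\mu_0\mathcal E+c\big(\|g\|_{L^{2,1}}^2+\|f_{yy}\|^2+\|g_y\|^2\big)\le C\|h\|_{Y^1}^2+|\text{nonlinear terms}|,
\]
with $\mathcal E\simeq\Phi$. The factors $\frac{r^2e^{-s}}{a}$ and $\frac{r'}{a}$ are treated as slowly varying. By \eqref{ab}, \eqref{H1} and Assumption (I) we have $\frac{r^2e^{-s}}{a}\to0$ exponentially in $s$, and $\frac{r'}{a}=O(e^{-\kappa s})$; so their $s$-derivatives and commutators are absorbed for $s_0$ large. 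The source satisfies $\|h_1\|_{Y^1}^2\lesssim e^{-2\mu_0 s}$ because $\Omega$ and its derivatives, weighted by $y^2$, lie in $L^2$ thanks to the right inequality in Assumption (III). The term $h_2$ carries $\varepsilon(t)=O(e^{-\kappa s})$. The nonlinearity satisfies $|\mathcal N(f)|\lesssim |f|^{\min(p,2)}$ locally, using $\Omega>0$ and $\|f\|_{L^\infty}\lesssim\Phi^{1/2}$; it is bounded by $C\Phi^{\min(p,2)/2}\cdot(\text{dissipation})^{1/2}$ and absorbed under the smallness bootstrap. Gronwall then gives \eqref{P123}, and continuation closes the argument.

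The main obstacle is the coercivity of the weighted quadratic form coming from $\mathcal L$: showing that $\int(1+y^2)(f_y^2+(p\Omega^{p-1}-\theta)f^2)$ and its dissipation control $\Phi$ with a positive rate. The danger is the negative $-\theta f^2$ on compact sets, which the weight commutator from $\frac y2f_y$ only partially compensates. I would first try to compute exactly the identity $\int(1+y^2)f\,\mathcal L f$ to obtain dissipation $\ge(\frac34-\theta)\int y^2 f^2$ at infinity. I would then handle the bounded region using the positivity of $p\Omega^{p-1}$ there, possibly with a Hardy/Poincaré inequality in the Gaussian-free weighted setting. If that fails, I would add a small multiple of $\int f^2$ localized, and use that $\Omega$ solves \eqref{1.4}, so that $\mathcal L$ has no nonnegative eigenvalue; for this I would test $\mathcal L$ against $\Omega$ and $\Omega'$.
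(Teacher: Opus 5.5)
The outer frame of your argument matches the paper: local theory in the original variables, a functional equivalent to $\Phi$, exponential smallness of $h$, $\varepsilon(t)$, $\frac{r^2e^{-s}}{a}$, $\frac{e^{-s}}{a}$ and $\frac{r'}{a}$, then a bootstrap. However, the step you yourself flag as the main obstacle is the heart of the proof, and none of your suggested remedies, as stated, closes it.

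The $(1+y^2)f$-multiplier (Lemma \ref{L1}) yields the dissipation $-\int(1+y^2)f_y^2-(\frac34-\theta)\int y^2f^2$, against the bad term $(\theta+\frac34)\int f^2$. No Hardy/Poincar\'e-type argument can absorb that term. For $f=e^{-y^2/(2\sigma^2)}$,
\[
\frac{\int(1+y^2)f_y^2\,\mathrm{d}y+(\frac34-\theta)\int y^2f^2\,\mathrm{d}y}{\int f^2\,\mathrm{d}y}=\frac34+\frac{1}{2\sigma^2}+\frac{(\frac34-\theta)\sigma^2}{2},
\]
whose minimum over $\sigma$, namely $\frac34+\sqrt{\frac34-\theta}$, is strictly below $\theta+\frac34$ for every $\theta\in(\frac12,\frac34)$. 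Hence the potential has to do quantitative work, and its mere positivity gives nothing; the paper in fact discards $-p\int\Omega^{p-1}(1+y^2)f^2$.

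Your spectral fallback is unproved and delicate:
\begin{itemize}
\item Formally, $\psi=\partial_{c_0}\Omega$ solves $\mathcal L\psi=0$ with $\psi\sim|y|^{-2\theta}$. This is a zero mode that misses $L^{2,1}$ only because $\theta<\frac34$.
\item Testing against $\Omega'$ only detects the mode $\mathcal L\Omega'=-\frac12\Omega'$.
\item A spectral bound for this non-self-adjoint operator would not by itself give the quadratic-form inequality needed for the coupled system with the $c_1$, $c_3$ terms.
\end{itemize}

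The missing idea is the corrective weight $q$ of \eqref{3.28}. Your instinct to exploit \eqref{1.4} is the right one, since $\Omega>0$ and $\mathcal L\Omega=-(p-1)\Omega^p<0$. The paper converts this into a polynomially weighted inequality in three steps. It substitutes $\theta-\Omega^{p-1}=-(\Omega''+\frac y2\Omega')/\Omega$ into $\int qf\mathcal L(f)$ and integrates by parts. It then completes the square, $-\int q\,(f_y-\frac{\Omega'}{\Omega}f)^2\le0$. Finally it chooses $q$ to satisfy the differential inequality \eqref{3.25}. This gives
\[
\int qf\mathcal L(f)\le-(p-1)\int\Omega^{p-1}qf^2,
\]
with $\inf q>0$ and $q\sim|y|^{4\theta-1}\lesssim1+y^2$ (again because $\theta<\frac34$). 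Taking $M$ as in \eqref{eq:M} and $\rho$ large then absorbs $(\theta+\frac34)\int f^2$ (Lemma \ref{L03}).

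Two further points need fixing.
\begin{itemize}
\item Your coefficients are inverted. Only the $f$-multiplier identities dissipate $\int(1+y^2)(f^2+f_y^2)$. The $g$-multiplier identities generate $O(1)$ positive multiples of it, e.g. $(\theta+\frac14)\int f_y^2+(\theta-\frac14)\int y^2f_y^2$, $-2\int yf_yg$ and $\int(1+y^2)kg$. So the $q$-corrected $L^2$ functional must carry weight one, and the $g$-type functionals small weights $\vartheta,\zeta,\omega$. Your energy $\int(1+y^2)(f_y^2+(p\Omega^{p-1}-\theta)f^2)$ is in any case indefinite.
\item $\Omega\notin L^{2,1}$, since $\Omega\sim c_0|y|^{-2\theta}$ and $\theta<\frac34$. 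The bound on $h$ rests instead on the cancellations $\theta\Omega+\frac y2\Omega'=O(|y|^{-2\theta-2\varsigma})$ and \eqref{28oc2L}, together with $\Omega^p\in L^{2,1}$. This is where $\theta p>\frac34$ is used.
\end{itemize}
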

\begin{theo}\label{T2}
Under \textbf{Assumptions (I), (II), (III)}, there exist
$t_{0}>0$ and $\delta_{0}>0$
such that, for the initial data
$(u_0, u_1)$ given at $t=t_0$ satisfying 
\[
	\|(u_0 - \Gamma(t_0) , u_1 - \partial_t \Gamma (t_0))\|_{X^{1}(\R)\times Y^{1}(\R)} \le{\delta}_{0},
\]
the solution $u$ of the equation \eqref{A} 
with the initial data
$(u(t_0), \partial_t u(t_0)) = (u_0, u_1)$
satisfies
\[
	u - \Gamma \in {\mathcal C}^0([t_{0},+\infty ), X^1(\R)) \cap  {\mathcal  C}^1([t_{0}, +\infty ), Y^1(\R)).
\]
Moreover, there exist
$ \mu_{0}>0$
and satisfies in particular the following estimate:
\begin{equation}\label{1.23}
	\| u(t) - \Gamma (t)\|_{L^{\infty}}
	={\mathcal O}(R(t)^{-\theta -\frac{\mu_{0}}{2}}), \qquad \qquad t \to +\infty .
\end{equation}
\end{theo}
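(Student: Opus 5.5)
The statement to prove is Theorem \ref{T2}. I would deduce it from Theorem \ref{T1} by undoing the change of variables \eqref{scaling1}, \eqref{1.14}. The argument has three steps.

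\emph{Step 1: translate the data.} Given $(u_0,u_1)$ at $t=t_0$, define
\[
s_0=\log(R(t_0)+1),\qquad f_0(y)=\tfrac{(R(t_0)+1)^{\theta}}{A_0}\,u_0\big(\sqrt{R(t_0)+1}\,y\big)-\Omega(y),
\]
and define $g_0$ analogously from $u_1$ via the second line of \eqref{scaling1} together with \eqref{1.14}.

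We must show that $\|(f_0,g_0)\|_{Z^1}\le \delta_0'$ once $\delta_0$ is small and $t_0$ is large. Write
\[
u_0-\Gamma(t_0)=\big(u_0-\widetilde\Gamma(t_0)\big)+\big(\widetilde\Gamma(t_0)-\Gamma(t_0)\big),
\]
where $\widetilde\Gamma(t,x)=A_0(R(t)+1)^{-\theta}\Omega\big(x/\sqrt{R(t)+1}\big)$ is the profile written in the variable $R+1$. Note that $\Gamma$ is written with $R_0(t)$, and $R_0(t)$ and $R(t)+1$ differ by relative order $(1+t)^{-1}$ by \eqref{ab}.

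The first piece rescales into $f_0$. The weighted norms transform with fixed powers of $R(t_0)+1$, which are bounded for fixed $t_0$. The second piece is bounded by $C(1+t_0)^{-1}$ times norms of $\Omega$, $y\Omega'$ and the corresponding time-derivative terms. Lemma \ref{L3bis1} places these in $X^1$, $Y^1$ under Assumption (III).

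So we first choose $t_0$ large, so that $s_0$ meets the requirement of Theorem \ref{T1} and the profile mismatch is below $\delta_0'/2$. Then we choose $\delta_0$ small relative to the rescaling constants at this fixed $t_0$.

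\emph{Step 2: global solution and regularity.} Theorem \ref{T1} gives a global solution $(f,g)\in\mathcal C([s_0,\infty),Z^1)$. Reversing \eqref{1.14} and \eqref{scaling1} produces a solution $u$ of \eqref{A}, and $u$ coincides with the local solution by uniqueness. The first equation of \eqref{scaling} links $g$ to $\partial_s f$. Translating back gives the regularity $u-\Gamma\in\mathcal C^0X^1\cap\mathcal C^1Y^1$. As in Step 1, we again absorb the smooth discrepancy $\widetilde\Gamma-\Gamma$, which is $O((1+t)^{-1})$ in relative size.

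\emph{Step 3: the $L^\infty$ rate.} This step carries the main bookkeeping. By the one-dimensional Sobolev embedding,
\[
\|f(s)\|_{L^\infty}\le C\|f(s)\|_{H^1}\le C\,\Phi(s)^{1/2}\le Ce^{-\mu_0 s/2}.
\]
Hence
\[
\|u(t)-\widetilde\Gamma(t)\|_{L^\infty}=A_0(R+1)^{-\theta}\|f(s)\|_{L^\infty}\le C(R+1)^{-\theta-\mu_0/2}.
\]
It remains to control $\widetilde\Gamma-\Gamma$. By the mean value theorem in the scale variable, and using $|\Omega|+|y\Omega'|\le C$, we get
\[
|\widetilde\Gamma-\Gamma|\le C R^{-\theta}\,\big|R_0(t)/(R(t)+1)-1\big|\le C R^{-\theta}(1+t)^{-1}.
\]
Since $R(t)\sim t^{\alpha-\beta+1}$, the factor $(1+t)^{-1}$ is a negative power of $R$. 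Shrinking $\mu_0$ if necessary, so that $\mu_0/2\le 1/(\alpha-\beta+1)$, this term is also $O(R^{-\theta-\mu_0/2})$, which gives \eqref{1.23}.

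The main obstacle is the careful comparison of $R_0(t)$ with $R(t)+1$. This comparison is needed both in Step 1, to show that the initial perturbation is small in $Z^1$, and in Step 3, where the mismatch forces a possibly reduced $\mu_0$.
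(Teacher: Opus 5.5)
Your argument follows the paper's route: rescale the data, apply Theorem \ref{T1}, and use $\|f(s)\|_{L^\infty}\le C\|f(s)\|_{H^1}\le C\Phi(s)^{1/2}$. Where you go further is in separating $\Gamma$, which is built on $R_0(t)$, from the profile $\widetilde\Gamma$ built on $R(t)+1$, which is what \eqref{scaling1} and \eqref{1.14} actually subtract. The paper's proof silently identifies the two: it writes $\|u(t)-\Gamma(t)\|_{L^\infty}\le C(R(t)+1)^{-\theta}\|f\|_{L^\infty}$ directly. Your Steps 1 and 3 therefore supply a comparison the paper omits, needed both to transfer the smallness hypothesis to $(f_0,g_0)$ and to obtain the final rate.

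Two points in that comparison need repair.

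First, the relative mismatch is not $O((1+t)^{-1})$ in general. From \eqref{ab}, $r(t)=b_0^{-1}(1+t)^{\alpha-\beta}(1+O((1+t)^{-1}))$. When $\alpha<\beta$, the integrated error converges to a constant that is generically nonzero, so $R_0(t)/(R(t)+1)-1$ is in general of order $(1+t)^{-(\alpha-\beta+1)}\sim R^{-1}$. When $\alpha=\beta$ there is an extra logarithm. This is still a negative power of $R$, so Step 3 goes through once your condition is replaced by $\mu_0/2<\min\{1,1/(\alpha-\beta+1)\}$.

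Second, in Step 1 you cannot bound $\widetilde\Gamma(t_0)-\Gamma(t_0)$ by norms of $\Omega$ and $y\Omega'$ taken separately. Both decay like $|y|^{-2\theta}$, and since $\theta<3/4$, neither belongs to $L^{2,1}$. The mismatch is small in $Z^1$ only because the leading term $c_0|y|^{-2\theta}$ is invariant under $\Omega\mapsto\lambda^{\theta}\Omega(\sqrt{\lambda}\,\cdot)$. Consequently the mean value theorem in $\lambda$ produces only the combination $\theta\Omega+\frac y2\Omega'=O(|y|^{-2\theta-2\varsigma})$, by \eqref{1.5000}. For the $g$-component it produces the bracket appearing in $h_1$, which has the same decay by \eqref{28oc2L}, plus the extra term $(R_0'(t)/r(t)-1)(\theta\Omega+\frac y2\Omega')$. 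These functions lie in $L^{2,1}$ exactly because $\theta+\varsigma=\min\{\theta+1,\theta p\}>3/4$. That is the condition extracted from Assumptions (II) and (III) in the proof of Lemma \ref{lem:est:h}. The derivatives required for $X^1\times Y^1$ are handled the same way using \eqref{28oc2bis}--\eqref{28oc2kk}.

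With these two corrections your proof is complete, and somewhat more careful than the paper's.
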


Here, we give comparisons with previous studies, some remarks, and the strategy of the proof for
the above theorems.

The method of scaling variables coupled with energy estimates and various weighted energy estimates
is successfully used by Gallay and Raugel in \cite{GR1}.
They study the nonlinear damped wave equation in the one-dimensional space
\[
	\partial_{tt}u + \partial_t u = \partial_x \left( a(x) \partial_x u \right) + N\left(u, \partial_x u, \partial_t u \right),
\]
and prove that the solution has the same large-time behavior as
the corresponding parabolic equation for the supercritical nonlinearity.
In the subcritical case with the specific nonlinearity $-|u|^{p-1}u$,
in \cite{Hamza2}, the first author proves
an asymptotic stability result
near a one-parameter family
of the self-similar solutions of the associated semilinear parabolic equation.
Therefore, Theorems \ref{T1} and \ref{T2} can be regarded as variants of the result of \cite{Hamza2}
for the beam equation.

The study of the asymptotic behavior of solutions for the damped wave equation
with time-dependent coefficient
$\partial_{tt} u + b(t) \partial_t u - \partial_{xx}u=0$
goes back to Wirth in \cite{wi1} and \cite{wi2}.
He gives an interesting observation:
For $b(t) \sim (1+t)^{\beta}$, the asymptotic profiles of the solution are classified as
wave-type if $\beta < -1$ and heat-type if $-1<\beta <1$.
This observation can be generalized to the damped beam equation with two variable coefficients:
$\partial_{tt} u + b(t) \partial_t u - a(t) \partial_{xx}u + \partial_{xxxx} u=0$
with $a(t) \sim (1+t)^{\alpha}$ and $b(t)\sim (1+t)^{\beta}$.
It is known that in the constant coefficient case $a(t)=b(t) = 1$,
the solution behaves like that for the heat equation (see \cite{ta-yo}).
The classification of the behavior of solutions for general $(\alpha, \beta)$ can be found in \cite{YW1}.
Moreover, the corresponding nonlinear problem
$\partial_{tt} u + b(t) \partial_t u - a(t) \partial_{xx}u + \partial_{xxxx} u= \partial_x f(\partial_x u)$ 
is studied in the authors' previous result \cite{our1} in the supercritical case.
Our main theorems are counterparts to \cite{our1} for the subcritical case.
As far as the authors know, our theorems are new even in the constant coefficients case $a(t) = b(t)=1$.

\begin{rem}\label{rem:spec}
{(Heuristic dynamics and spectral properties)}
As detailed previously, the function $K(t,x)$ serves as an asymptotic solution to the linear problem and remains valid for the nonlinear problem \eqref{A123L} in the supercritical case.
In this setting, the nonlinear term acts as a perturbation that becomes asymptotically negligible as
$t \to +\infty$.
In other words, when the problem is reformulated in terms of the forward similarity variables $(y, s)$, the long-time asymptotic dynamics of the solution to \eqref{scaling} as $s \to +\infty$ are formally dictated by the following linear equation:
\begin{equation}
\label{w1.18}
f_{s} = \widetilde{\mathcal{L}}_{\infty}(f),
\end{equation}
where
\begin{equation*}
\widetilde{\mathcal{L}}_{\infty}(f) = f_{yy} + \frac{1}{2}yf_y + \frac{1}{2}f.
\end{equation*}
According to \cite[Appendix A]{GR1}, the spectrum of the differential operator $\widetilde{\mathcal{L}_{\infty}}$ in the function space $Y^1(\mathbb{R})$ satisfies 
\begin{equation*}
\sigma(\widetilde{\mathcal{L}}_{\infty}) = \left\{ -\frac{n}{2} \;\middle|\; n \in \mathbb{N} \right\} \cup \left\{ z \in \mathbb{C} \;\middle|\; \text{Re}(z) \le -\frac{1}{4} \right\} \cup \{0\}.
\end{equation*}
The spectrum of this  operator is contained in the half-plane $
\left\{ z \in \mathbb{C} \;\middle|\; \text{Re}(z) \le 0 \right\},$
 with the exception of the isolated principal eigenvalue $\lambda_0 = 0$, whose corresponding eigenfunction is given by the Gaussian profile $\varphi_0(y) = \frac{1}{\sqrt{4\pi}}e^{-y^2/4}$. Although these precise spectral estimates are not explicitly utilized in the proof, they are nevertheless essential for understanding the origin of the asymptotic expansion. Consequently, the long-time dynamics of the profile are driven primarily by these leading spectral elements. In fact, the aforementioned properties allow us to derive sharp, self-contained bounds by hand, utilizing refined energy estimates rather than relying on abstract spectral projections.

Obviously, in the subcritical regime, namely when $p < 1 + \frac{2(1-\beta)}{\alpha-\beta+1}$, the situation becomes significantly more challenging. Indeed, the nonlinear term is non-negligible at infinity and plays a decisive role in determining the precise asymptotic behavior of the solutions. More precisely, in this case, the dynamics are governed by a family of self-similar solutions to the associated parabolic problem. Formally, the behavior near this family of profiles is driven by the following equation:
\begin{equation}
\label{1.18}
f_{s} = \mathcal{L}_{\infty}(f),
\end{equation}
where
\begin{equation}
\label{1.18-bis}
\mathcal{L}_{\infty}(f) = \widetilde{\mathcal{L}}_{\infty}(f) + \left(\theta - \frac{1}{2}\right)f - p\Omega^{p-1}(y)f,
\end{equation}
and $\Omega$ is a solution of \eqref{1.4}.

We remark that the spectrum of the unperturbed operator ${\mathcal{L}}_{\infty}$ in the space $Y^1(\mathbb{R})$ is not explicitly known. However, since the profile $\Omega(y)$ decays sufficiently fast at infinity, the potential term $ - p\Omega^{p-1}(y)$ constitutes a localized, compact perturbation.
 Consequently, the essential spectrum of the  operator $\mathcal{L}_{\infty}$ is entirely determined by that of $\widetilde{\mathcal{L}}_{\infty}$ shifted by the real parameter $\theta - \frac{1}{2}$. By using the fact  that $\sigma_{\text{ess}}(\widetilde{\mathcal{L}}_{\infty})$ is contained in the half-plane $
\left\{ z \in \mathbb{C} \;\middle|\; \text{Re}(z) \le -\frac14\ \right\}$, it follows that $\sigma_{\text{ess}}(\mathcal{L}_{\infty})$ is contained in
$
\left\{ z \in \mathbb{C} \;\middle|\; \text{Re}(z) \le \theta-\frac34\ \right\}$.
Finally, \textbf{Assumption (III)} ensures that 
$\theta < \frac{3}{4}$,
which implies that the essential spectrum of $\mathcal{L}_{\infty}$ lies strictly in the left half-plane: 
$\sigma({\mathcal{L}_{\infty}}) \subset
\left\{ z \in \mathbb{C} \;\middle|\; \text{Re}(z) \le -\tilde \lambda \right\}$,
for some
$\tilde \lambda>0.$
Furthermore, the discrete spectrum will naturally shift under perturbation. To gain precise control over these eigenvalues, we exploit the relationship between the two operators, viewing  $\mathcal{L}_{\infty}$ as a perturbation of 
$\widetilde{\mathcal{L}}_{\infty}$. As seen in \eqref{1.18-bis}, they differ by exactly two terms:
\begin{itemize}
\item[{a)}] The first term, $\left(\theta - \frac{1}{2}\right)f$, induces a simple  translation of the spectrum to the right by $\theta - \frac{1}{2}$ (since $\theta > \frac{1}{2}$) compared to $\sigma(
\widetilde{
\mathcal{L}}_{\infty})$.
\item[{b)}] The second term, $-p\Omega^{p-1}(y)f$, acts as a localized negative potential, which formally pulls the spectrum to the left.
\end{itemize}
Although the competing nature of these two algebraic shifts prevents an immediate localization of the discrete spectrum, this difficulty can be bypassed by means of sharp energy estimates. More precisely, by implementing a careful energy argument, we establish that the superposition of these two competing effects forces the entire spectrum of the operator ${\mathcal{L}_{\infty}}$ in $Y^1(\mathbb{R})$ to be strictly confined to the left complex half-plane. That is, under
\textbf{Assumptions (I), (II), (III)}, there exists a constant $\tilde{\lambda} > 0$ such that
\begin{equation}\label{3543}
\sigma({\mathcal{L}_{\infty}}) \subset
\left\{ z \in \mathbb{C} \;\middle|\; \text{Re}(z) \le -\tilde \lambda\ \right\}.
\end{equation}

To clarify the role of these spectral bounds, let us outline the strategy for deriving the refined energy estimates that form the technical core of this work. A standard approach relying on classical energy functionals equipped with the typical polynomial weight $(1+y^2)$, as employed in \cite{GR1, our1}, fails in the present context. Indeed, a direct calculation of the quadratic form
\begin{equation}
 \label{bad}
 \text{Re} \left( \int_{\mathbb{R}} \mathcal{L}_{\infty}(f) \bar{f}  (1+y^2) \mathrm{d}y \right)
 \end{equation}
 generates problematic positive terms arising from the tail behavior of the subcritical profile $\Omega(y)$ and the cross-terms of the operator. These coherent structures cannot be absorbed by the dissipative terms of the equation, preventing one from establishing the localized coercivity needed for decay.
To overcome this fundamental structural obstruction, we introduce a novel corrective weight $q(y)$, constructed explicitly by using the subcritical profile $\Omega(y)$ (as detailed below in \eqref{3.28}), which remains controlled by the standard polynomial weight $(1+y^2)$. From an analytical standpoint, this modified framework allows us to exploit crucial algebraic cancellations inherent to the linearized operator, yielding the fundamental estimate:
\begin{equation} \label{cc}
\text{Re} \left( \int_{\mathbb{R}} \mathcal{L}_{\infty}(f) \bar{f}  \mathrm{d}\mu \right) \le -\tilde{\lambda} \int_{\mathbb{R}} |f|^2 \mathrm{d}\mu,
\end{equation}
where $\mathrm{d}\mu(y) = \left(1 + y^2 + \tilde{c} q(y)\right) \,\mathrm{d}y$ denotes the modified weighted measure, for some $\tilde{c} > 0$, and $\tilde{\lambda} > 0$. Crucially, inequality \eqref{cc} implies \eqref{3543}.
 On the level of the full coupled system \eqref{scaling}, this mechanism is captured by introducing a dedicated corrective functional
 based on a special weight function $q(y)$
 (for the definition, see \eqref{3.2} and \eqref{3.28}).
 This functional is tailored so that it is equivalent to the standard energy
 and enables us to obtain the exponential decay (see Section 2.1).
\end{rem}

The outline of this paper is as follows.
In the next section,
we give the a priori estimates of the solution in terms of weighted energy.
In Appendices, we collect some technical lemmas used in the energy estimates,
and we prove the local existence of the unique mild solution belonging to weighted energy spaces.



\section{Proofs of main theorems}\label{sec2}
The local existence of mild solutions in weighted Sobolev spaces
with initial data given at an arbitrary initial time $s=s_0$
is proved in Appendix B.
Therefore, it suffices to prove the a priori estimate which guarantees
the global existence and the decay of solutions.
To this end, throughout this section, we assume that for $s_{0}>0$ and $S>0$,
we are given a solution
$ (f,g)\in {\mathcal C}^{0}([s_{0},s_{0}+S], Z^1(\R))$ of (\ref{scaling}) with
initial data $(f_{0},g_{0}) \in Z^1(\R)$   such that $\|(f_0,g_0)\|_{Z^1(\R)}
\le \delta_0$, which satisfies
\begin{equation}\label{3.1}
	\|f(s)\|^2_{H^{1,1}(\R)}+\|f_{yy}(s)\|^2_{L^2(\R)}
	\le\kappa^2 \delta^2_{0}<1,\qquad \
	s\in [s_{0},s_{0}+S],
\end{equation}
where $\kappa$ is a real number that will be fixed later
and $\delta_{0}$ is small enough
such that $\kappa\delta_{0}<1$.
In order to apply general energy identity in Lemma \ref{LG},
we sometimes use the following rearranged version of the system \eqref{scaling}:
\begin{equation}\label{scalingh}
\left\{
\begin{alignedat}{3}
&g=f_{s}-\frac{y}{2}f_y-\theta  f,\\
&\frac{r^2e^{-s}}{a}\Big(g_s-\frac{y}2g_y-(\theta  +1) g\Big)+\frac{r'}{a}g
+g=-\frac{e^{-s}}{a}f_{yyyy}+f_{yy}+k+h,
\end{alignedat}
\right.
\end{equation}
where $h$ is  given by \eqref{1.17}, and  
\begin{equation}\label{defk}
k=-\big(|f+\ \f |^{p-1}(f+\ \f )
-\ \f ^{p}(y)\big).
\end{equation}
Throughout this section, the symbol $C$ stands for a generic constant
that may change from line to line.

\subsection{Part I: Weighted $L^2$ energies}
We first introduce the following energy functionals.
\begin{equation}
\label{3.2}
\begin{alignedat}{3}
&E_2^{(0,1)}(s) = \frac{1}{2} \int_{\R}(1+y^2) f^{2}  \,{\mathrm{d}}y
	+ \frac{r^2e^{-s}}{a} \int_{\R} (1+y^2) f g \, {\mathrm{d}}y, \\
&E_{q}(s)=\frac{1}{2}\displaystyle\int_{\R} q(y)f^{2} \,{\mathrm{d}}y
	+\frac{r^2e^{-s}}{a} \int_{\R} q(y) f g \,{\mathrm{d}}y, \\
& \E_{\rho}(s)=E_{2}^{(0,1)}(s)+\rho E_{q}(s),
\end{alignedat}
\end{equation}
where the function $q(y)$ is defined by \eqref{3.28} and its properties are summarized in Lemma \ref{L3},
and $\rho > 0$ is a sufficiently large constant determined later.
The above energy functionals correspond to the weighted $L^2$-norm of $f$.

\begin{lem}\label{L1}
$E_{2}^{(0,1)} \in{\mathcal C}^1([s_{0},s_{0}+S])$ and, for
all $s\in [s_{0},s_{0}+S]$,
\begin{align}\label{3.3}
	\frac{d}{ds}E_{2}^{(0,1)}(s)
	&=
	-\frac{e^{-s}}{a}\int_{\R}(1+y^{2})f_{yy}^2 \,{\mathrm{d}}y
	-\int_{\R}(1+y^{2}) f_{y}^2 \,{\mathrm{d}}y
	+ \left( \theta -\frac{ 3}{4} \right) \int_{\R}y^2f^2 \,{\mathrm{d}}y \\
\nonumber
	&\quad
	+ \left( \theta  +\frac{3}{4} \right) \int_{\R}f^2{\mathrm{d}}y
	+ \Lambda_{2}^{(0,1)} (s),
\end{align}
where $\Lambda_{2}^{(0,1)}$ satisfies
\begin{equation}\label{3.300A}
	\Lambda_{2}^{(0,1)}
	\le
	C_0 (\kappa\delta_{0})^{\overline {p}-1} \int_{\R}(1+y^{2})f^2 \,{\mathrm{d}}y
	+Ce^{-\mu s} \int_{\R}(1+y^{2})g^2 \,{\mathrm{d}}y +Ce^{-\mu s},
\end{equation}
with $\overline{p}= \min (2,p)$.
\end{lem}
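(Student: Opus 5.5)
We differentiate $E_2^{(0,1)}$ and substitute the rearranged system \eqref{scalingh}. The main terms in \eqref{3.3} come from the parabolic part. Every term carrying the factor $\frac{r^2e^{-s}}{a}$, $\frac{r'}{a}$, $\frac{e^{-s}}{a}$ or $\varepsilon(t)$ goes into $\Lambda_2^{(0,1)}$. For these we use that, by \eqref{ab}, \eqref{H1}, \eqref{defR} and $e^s=R(t)+1\sim t^{\alpha-\beta+1}$,
\[
\frac{r^2e^{-s}}{a}\sim t^{-1-\beta},\quad \Big(\frac{r^2e^{-s}}{a}\Big)'\text{ (in }s\text{)}\lesssim t^{-1-\beta},\quad \frac{|r'|}{a}\lesssim t^{-1-\beta},\quad \frac{e^{-s}}{a}\lesssim t^{-1-2\alpha+\beta},\quad |\varepsilon(t)|\lesssim t^{-1}.
\]
By Assumption (I) all of these are $\le Ce^{-\mu s}$ for some $\mu>0$. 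Regularity $E_2^{(0,1)}\in\mathcal C^1$ follows from $(f,g)\in\mathcal C^0([s_0,s_0+S],Z^1)$ together with the equation; if needed, argue first for smooth data and pass to the limit by the continuous dependence in Appendix B.

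\emph{Main computation.} We have
\[
\frac{d}{ds}E_2^{(0,1)}=\int(1+y^2)ff_s+\frac{r^2e^{-s}}{a}\int(1+y^2)(f_sg+fg_s)+\Big(\frac{r^2e^{-s}}{a}\Big)'\int(1+y^2)fg .
\]
Insert $f_s=g+\frac y2f_y+\theta f$ in the first integral. In the term $\frac{r^2e^{-s}}{a}\int(1+y^2)fg_s$, write $g_s=\big(g_s-\frac y2 g_y-(\theta+1)g\big)+\frac y2g_y+(\theta+1)g$ and use the second equation of \eqref{scalingh}. The term $\int(1+y^2)fg$ then cancels against $-\int(1+y^2)fg$ from the right-hand side. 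One integration by parts gives
\[
\int(1+y^2)f\big(\tfrac y2f_y+\theta f\big)=\big(\theta-\tfrac14\big)\int f^2+\big(\theta-\tfrac34\big)\int y^2f^2 ,
\]
and another gives
\[
\int(1+y^2)ff_{yy}=-\int(1+y^2)f_y^2+\int f^2 .
\]
These produce exactly the coefficients $\theta-\frac34$ and $\theta+\frac34$ in \eqref{3.3}. For the fourth-order term, integrating by parts twice yields
\[
-\frac{e^{-s}}{a}\int(1+y^2)ff_{yyyy}=-\frac{e^{-s}}{a}\int(1+y^2)f_{yy}^2+\frac{4e^{-s}}{a}\int f_y^2 .
\]
The last term is at most $Ce^{-\mu s}\kappa^2\delta_0^2\le Ce^{-\mu s}$ by \eqref{3.1}.

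\emph{Remainder terms.} All terms with $\frac{r^2e^{-s}}{a}$ or its derivative involve $g^2$, $fg$, $yf_yg$ and $yfg_y$, weighted by $(1+y^2)$. We integrate $yfg_y$ by parts onto $f,f_y$, then apply Young's inequality and \eqref{3.1}. This bounds them by $Ce^{-\mu s}\big(\int(1+y^2)g^2+1\big)$. The term $-\frac{r'}{a}\int(1+y^2)fg$ is handled the same way. For the nonlinear term, monotonicity of $x\mapsto|x|^{p-1}x$ gives $kf\le0$ pointwise, so $\int(1+y^2)fk\le0$. Alternatively, if one wants to keep the negative potential $-p\Omega^{p-1}f^2$ for later use, split $k=-p\Omega^{p-1}f+\mathcal N(f)$. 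Then use $|\mathcal N(f)|\le C|f|^{\bar p}$ (for $p<2$ directly; for $p\ge2$ with $\|f\|_{L^\infty}\le C\kappa\delta_0<C$ via the embedding $H^1\subset L^\infty$). This yields the term $C_0(\kappa\delta_0)^{\bar p-1}\int(1+y^2)f^2$. Finally, for the forcing,
\[
\int(1+y^2)fh_1\le\|f\|_{L^{2,1}}\|h_1\|_{L^{2,1}},
\]
which is absorbed into the right-hand side of \eqref{3.300A} after using Young's inequality and \eqref{3.1}. For $h_2$, we use $|\varepsilon|\le Ce^{-\mu s}$, $\|f\|_{L^\infty}\le1$ and $|f+\Omega|^p|f|\le C(|f|^2+\Omega^p|f|)$.

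\emph{Expected obstacle.} The main difficulty is showing $\|h_1(s)\|_{L^{2,1}}\le Ce^{-\mu s}$ and $(1+y^2)\Omega^p\in L^2$. The coefficient prefactors are harmless. The issue is integrability: with $\Omega(z)\sim c_0|z|^{-2\theta}$, and $\Omega',\Omega''$ behaving similarly (Lemma \ref{L3bis1}), we have $|y|\,y^2\Omega''\in L^2$ only if $2\theta>5/2-\ldots$. The right cancellations must be exploited: the combination $\theta\Omega+\frac y2\Omega'$ decays faster, by the asymptotics of Lemma \ref{L3bis1}. Once this is available, the precise decay rates together with the right-hand inequality in Assumption (III) guarantee that $h_1\in L^{2,1}$. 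This is the step I would check most carefully.
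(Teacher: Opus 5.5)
Your proof is correct. Apart from one step it is the paper's proof. The paper obtains \eqref{3.3} by applying Lemma~\ref{LG} with $n=0,1$, $l=\theta$, $m=\theta+1$ to \eqref{scalingh} and adding, which is exactly your direct computation: the cancellation of $\pm\int(1+y^2)fg$, the integrations by parts producing $\theta-\frac34$ and $\theta+\frac34$, and the term $\frac{4e^{-s}}{a}\int f_y^2$. It then bounds the terms with $c_1$, $c_1'$, $c_2$ by \eqref{v1}, \eqref{v2}, \eqref{3.1}, and the forcing by Lemma~\ref{lem:est:h}, as you do.

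The one genuine difference is the nonlinear term. The paper splits $k=-p\Omega^{p-1}f+\mathcal N(f)$, drops $-p\int\Omega^{p-1}(1+y^2)f^2\le 0$, and estimates $\int(1+y^2)f\mathcal N(f)$ by Lemma~\ref{LN}. That is the sole source of the term $C_0(\kappa\delta_0)^{\bar p-1}\int(1+y^2)f^2$ in \eqref{3.300A}. You instead observe that $kf=-\bigl(\phi(f+\Omega)-\phi(\Omega)\bigr)f\le 0$, where $\phi(x)=|x|^{p-1}x$ is increasing. This removes the whole term without any smallness, and shows that this part of \eqref{3.300A} is not needed for Lemma~\ref{L1}. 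The paper's splitting is set up to parallel Lemma~\ref{L2}. There the potential $-p\Omega^{p-1}$ is genuinely used, via \eqref{1.4}, to produce $-(p-1)\int\Omega^{p-1}qf^2$, so the shortcut would discard exactly the needed term. Here it is simply cleaner.

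A few details should be tightened.

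\emph{Cancellation of the $|y|^3$-weighted terms.} Two terms $\pm\frac{c_1}{2}\int(1+y^2)y f_y g$ arise: one from $c_1\int(1+y^2)f_sg$, and one from integrating $c_1\int(1+y^2)\frac y2 fg_y$ by parts. You must state that they cancel exactly, leaving $-\frac{c_1}{2}\int(1+3y^2)fg$. Individually each carries the weight $|y|^3$ and cannot be bounded by Young's inequality under \eqref{3.1}.

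\emph{The condition for $h_2$.} What you need is $\Omega^p\in L^{2,1}$, i.e.\ $\theta p>\frac34$. You wrote $(1+y^2)\Omega^p\in L^2$, which would require $\theta p>\frac54$ and can fail under the assumptions.

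\emph{The two cancellations for $h_1$.} You need both \eqref{1.5000} and \eqref{28oc2L}. The latter is what controls $\frac y2\bigl((\theta+\frac12)\Omega'+\frac y2\Omega''\bigr)$, which contains the bad term $\frac{y^2}{4}\Omega''$. Together they give $|h_1|\le Ce^{-\mu s}(1+|y|)^{-2\theta-2\varsigma}$, with $\varsigma$ from \eqref{1.5bis}. Hence $h_1\in L^{2,1}$ as soon as $\theta+\varsigma>\frac34$. This follows from \textbf{Assumption (III)} and $\theta>\frac12$, exactly as in the paper's three-case proof of Lemma~\ref{lem:est:h}.

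\emph{Regularity.} The continuous dependence in Appendix~B holds only in unweighted $H^2\times L^2$, so it does not by itself justify the weighted identity. The mollification argument in the proof of Lemma~\ref{LG} is what handles this.
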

\begin{proof}
For $n= 0,1$, let
\[
	E_2^{(n)}(s) := \frac{1}{2} \int_{\mathbb{R}} y^{2n}
		\left( f^2 + \frac{r^2e^{-s}}{a} fg \right) \, \mathrm{d}y.
\]
Then, applying Lemma \ref{LG} with
$c_1(s) = \dfrac{r^2e^{-s}}{a}$,
$c_2(s) = \dfrac{r'}{a}$,
$c_3(s) = \dfrac{e^{-s}}{a}$
to the system \eqref{scalingh}
with noting \eqref{eq:c1prime}, we obtain
\begin{equation}\label{3.7}
	\frac{d}{ds}{E_{2}^{(0)}}(s)=-\frac{e^{-s}}{a}\int_{\R}f^2_{yy} \,{\mathrm{d}}y
	-\int_{\R} f_{y}^2 \,{\mathrm{d}}y
	+ \left( \theta  -\frac{1}{4} \right) \int_{\R}f^2 \,{\mathrm{d}}y
	+ \Lambda _{2}^{(0)} (s),
\end{equation}
where
\begin{align}\label{3.7.bis}
	\Lambda _{2}^{(0)} (s)
	&=
	-p\int_{\R}\ \f ^{p-1}f^2 \,{\mathrm{d}}y+\int_{\R}f{\mathcal N}(f) \,{\mathrm{d}}y \\
\nonumber
	&\quad +\frac{r^2e^{-s}}{a}
		\int_{\R}\Big(g^2+(2\theta  -\frac{1}{2})fg\Big) \,{\mathrm{d}}y
		-\frac{b'}{b^2}\int_{\R}fg \,{\mathrm{d}}y
		+\int_{\R}hf \,{\mathrm{d}}y,
\end{align}
In the same way, we have
\begin{align}\label{ss0}
	\frac{d}{ds}{E_{2}^{(1)}}(s)
	&=
	-\frac{e^{-s}}{a}\int_{\R}y^{2}f_{yy}^2 \,{\mathrm{d}}y
	-\int_{\R}y^{2} f_{y}^2 \,{\mathrm{d}}y
	+ \left( \theta -\frac{3}{4} \right) \int_{\R}y^{2}f^2 \,{\mathrm{d}}y
	+\int_{\R}f^2 \,{\mathrm{d}}y
	+ \Lambda_{2}^{(1)} (s),
\end{align}
where
\begin{align}\label{ss1}
	\Lambda_{2}^{(1)} (s)
	&=
	-p\int_{\R} \f ^{p-1}y^{2}f^2{\mathrm{d}}y+\int_{\R}y^2f{\mathcal N}(f) \,{\mathrm{d}}y
	+4 \frac{e^{-s}}{a}\int_{\R}f^2_{y} \,{\mathrm{d}}y
	\nonumber\\ 
	&\quad
	+\frac{r^2e^{-s}}{a} \int_{\R}y^2 \Big(g^2+(2\theta -\frac{ 3}{2})fg\Big) \,{\mathrm{d}}y
	-\frac{b'}{b^2}\int_{\R}y^{2}fg \, {\mathrm{d}}y
	+\int_{\R}y^{2}hf \,{\mathrm{d}}y.
\end{align}
Adding the above two identities, we deduce \eqref{3.3} with
\begin{align}\label{3.7.}
	\Lambda_{2}^{(0,1)} (s)
	&:= \Lambda_{2}^{(0)} (s) + \Lambda_{2}^{(1)}(s) \\
	&=\underbrace{ \frac{r^2e^{-s}}{a} \int_{\R}\Big(g^2+(2\theta -\frac{1}{2})fg\Big) \,{\mathrm{d}}y}_{I_1(s)}
	+\underbrace{\frac{r^2e^{-s}}{a} \int_{\R}y^2\Big(g^2+(2\theta -\frac{ 3}{2})fg\Big) \,{\mathrm{d}}y}_{I_2(s)}
	\nonumber\\
	&\quad +\underbrace{4 \frac{e^{-s}}{a}\int_{\R}f^2_{y} \,{\mathrm{d}}y}_{I_3(s)}
		\underbrace{-\frac{b'}{b^2}\int_{\R}(1+y^{2})fg \,{\mathrm{d}}y}_{I_4(s)}
		+\underbrace{\int_{\R}(1+y^{2})hf \,{\mathrm{d}}y}_{I_5(s)}
\nonumber\\
	&\quad + \underbrace{\int_{\R}(1+y^{2}) f \mathcal{N}(f) \,{\mathrm{d}}y}_{I_6(s)}
		-p\int_{\R} \Omega^{p-1}(1+y^{2})f^2 \,{\mathrm{d}}y.
\nonumber
\end{align}
It remains then to prove the estimate \eqref{3.300A}.
Combining the Schwarz inequality with \eqref{v1} and \eqref{v2} in Lemma \ref{lem:remainder},
we easily obtain
\[
	\sum_{i=1}^4|I_i(s)| \le Ce^{- \mu s}\Big( \|f(s)\|^2_{H^{1,1}(\R)}+ \|g(s)\|^2_{H^{0.1}(\R)}\Big).
\]
Thanks to \eqref{3.1}, we have
\[
	\sum_{i=1}^4|I_i(s)| \le Ce^{- \mu s} \|g(s)\|^2_{H^{0.1}(\R)}+Ce^{- \mu s}.
\]
For the term $I_5$, applying \eqref{3.1} and Lemma \ref{lem:est:h}, we conclude
\begin{equation}\label{65231}
	|I_5(s)| \le  C\|f(s)\|_{H^{0.1}(\R)} \|h(s)\|_{H^{0.1}(\R)}\le C e^{- \mu s}.
\end{equation}
Moreover, we handle the term $I_6(s)$ thanks to Lemma \ref{LN} and obtain
\[
	|I_6(s)| \le C (\kappa \delta_0)^{\bar{p}-1} \int_{\mathbb{R}} (1+y^2) f^2 \,\mathrm{d}y.
\]
Finally, since the function $\f (y)$ is positive, the last term in the right-hand side of \eqref{3.7.} is non-positive.
Putting the above estimates together, we have the estimate \eqref{3.300A}.
\end{proof}

The inequality (\ref{3.3}) still includes a positive term
\[
	\left( \theta  +\frac{3}{4} \right) \int_{\R}f^{2} \,{\mathrm{d}}y,
\]
which is needed to be controlled.
To surmount this, we introduce a new energy functional
$E_q(s)$, which includes a weight function $q(y)$ defined by \eqref{3.28}.

Let us state now the following  auxiliary result:
\begin{lem}\label{L2}
Let $E_q(s)$ be defined in \eqref{3.2}.
Then,
$E_{q}(s) \in {\mathcal C}^1([s_{0},s_{0}+S])$
and for all $s\in [s_{0},s_{0}+S]$,
\begin{equation}\label{3.15}
	\frac{d}{ds}{E_{q}}(s)\le -(p-1)\int_{\R} \f ^{p-1}q(y)f^2{\mathrm{d}}y 
	+ \Lambda_{q}(s),
\end{equation}
where
\begin{equation}\label{3.15a}
	\Lambda _{q}(s) \le 
	C (\kappa\delta_{0})^{\overline {p}-1} \int_{\R}(1+y^2)f^2 \,{\mathrm{d}}y
	+ C e^{-\mu  s}\int_{\R}(1+y^2)g^2 \,{\mathrm{d}}y+ C e^{-\mu s}
\end{equation}
with $\bar{p}=\min(2, p)$.
\end{lem}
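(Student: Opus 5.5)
The plan is to repeat the computation of Lemma \ref{L1}, with the polynomial weights $1$ and $y^2$ replaced by the single weight $q(y)$. Throughout, I rely on the properties of $q$ collected in Lemma \ref{L3}:
\begin{itemize}
\item $q$ is smooth and $q\ge 0$;
\item $|q|+|yq'|+|q''|\le C(1+y^2)$;
\item $q$ is built from $\f$ through \eqref{3.28}, so that a pointwise differential inequality of the form
\[
	\tfrac12 q''-\tfrac14 (yq)'+\theta q \le \f^{p-1} q
\]
holds.
\end{itemize}
I have not seen \eqref{3.28}, so this differential inequality is my guess at the property that makes the lemma work: it is the algebraic cancellation announced in Remark \ref{rem:spec}. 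It is exactly what is needed to turn $-p\int \f^{p-1}qf^2$ into $-(p-1)\int\f^{p-1}qf^2$.

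\textbf{First step: energy identity.} Apply Lemma \ref{LG} to the system \eqref{scalingh}, with weight $q$ in place of $y^{2n}$ and with $c_1=\frac{r^2e^{-s}}{a}$, $c_2=\frac{r'}{a}$, $c_3=\frac{e^{-s}}{a}$, using \eqref{eq:c1prime}. Formally, the leading part comes from $f_s=g+\frac y2 f_y+\theta f$ with $g\approx f_{yy}-p\f^{p-1}f$. Integrating by parts gives
\[
	\int q f\Big(f_{yy}+\tfrac y2 f_y+\theta f\Big)\,\y
	=-\int q f_y^2\,\y+\int\Big(\tfrac12 q''-\tfrac14(yq)'+\theta q\Big)f^2\,\y .
\]
Hence
\[
	\frac{d}{ds}E_q=-\int q f_y^2\,\y+\int\Big(\tfrac12 q''-\tfrac14(yq)'+\theta q-p\f^{p-1}q\Big)f^2\,\y+\Lambda_q(s).
\]
Here $\Lambda_q$ gathers the remaining terms:
\begin{itemize}
\item the $c_1$-terms $\frac{r^2e^{-s}}{a}\int q(g^2+\text{const}\cdot fg)$, together with the commutator pieces involving $q'$ and $yq'$;
\item the term $-\frac{b'}{b^2}\int qfg$;
\item the fourth-order contributions $\frac{e^{-s}}{a}\int(q f_{yy}^2+2q'f_yf_{yy}+\dots)$, which are integrated by parts until they contain at most $f_{yy}^2$ and $f_y^2$ with coefficients bounded by $C(1+y^2)$ (as $I_3$ was in Lemma \ref{L1});
\item the terms $\int q f\mathcal N(f)$ and $\int qhf$.
\end{itemize}
The first integral on the right is non-positive because $q\ge0$, so it is discarded. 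The differential inequality for $q$ then bounds the second integral by $-(p-1)\int \f^{p-1}qf^2\,\y$, which is \eqref{3.15}.

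\textbf{Second step: bounding $\Lambda_q$.} Since $|q|,|q'|,|q''|$ are bounded by $C(1+y^2)$, every term of $\Lambda_q$ is controlled exactly as $I_1,\dots,I_6$ in Lemma \ref{L1}:
\begin{itemize}
\item the $c_1$, $c_2$, $c_3$ terms via \eqref{v1}, \eqref{v2} of Lemma \ref{lem:remainder}, the Schwarz inequality and \eqref{3.1}, giving $Ce^{-\mu s}\int(1+y^2)g^2+Ce^{-\mu s}$;
\item the term $\int qhf$ via Lemma \ref{lem:est:h} and \eqref{3.1}, giving $Ce^{-\mu s}$;
\item the term $\int qf\mathcal N(f)$ via Lemma \ref{LN}, giving $C(\kappa\delta_0)^{\bar p-1}\int(1+y^2)f^2$.
\end{itemize}
Together these give \eqref{3.15a}. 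The $\mathcal C^1$ regularity of $E_q$ follows as for $E_2^{(0,1)}$: first for smooth approximating data, then by density, using the growth bound on $q$.

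\textbf{Main obstacle.} The delicate point is the first step. One must check that the exact weight \eqref{3.28} yields the differential inequality with constant $p-1$. In particular, the boundary and commutator terms coming from $\frac y2 f_y$ and from the $c_1$-part of the $g$-equation (which carry factors $y q'$) must be genuinely $\mathcal O(e^{-\mu s})$ or absorbable, and must not secretly contribute an $\mathcal O(1)$ positive term. If the inequality only holds with a remainder of the form $C\int_{|y|\le R}f^2$, I would try to absorb that remainder by choosing the constant $\rho$ large in $\E_\rho$, using the negative terms from Lemma \ref{L1}.
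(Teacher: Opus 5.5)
There is a genuine gap. Your first step is correct and matches the paper's \eqref{3.18}--\eqref{3.19}. Setting
\[
V:=\tfrac12 q''-\tfrac14 (yq)'+(\theta-\Omega^{p-1})q ,
\]
one has $\int q f\mathcal{L}(f)\,\mathrm{d}y=-\int q f_y^2\,\mathrm{d}y+\int V f^2\,\mathrm{d}y-(p-1)\int \Omega^{p-1}q f^2\,\mathrm{d}y$. Your bounds for $\Lambda_q$ also follow the paper. The problem is the next move: you discard $-\int q f_y^2$ and require $V\le 0$ pointwise. This fails for the weight \eqref{3.28}, which is the weight the lemma is about. By \eqref{qdiff} and \eqref{qdiff1}, $q'(0)=0$ and $q''(0)=\frac12 q(0)$, so
\[
V(0)=\bigl(\theta-\Omega(0)^{p-1}\bigr)q(0)=-\frac{\Omega''(0)}{\Omega(0)}\,q(0)>0 .
\]
Here $q(0)>0$, and $\Omega''(0)<0$ because $\Omega$ is even, nonincreasing on $(0,\infty)$, and not the constant solution $\theta^{1/(p-1)}$. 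Hence $\int Vf^2\,\mathrm{d}y>0$ for $f$ concentrated near the origin, and once the gradient term is dropped you cannot reach \eqref{3.15}.

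The missing idea is a ground-state (Picone-type) substitution with $\Omega$. The paper uses \eqref{1.4} to write $\theta-\Omega^{p-1}=-(\Omega''+\frac y2\Omega')/\Omega$ and integrates the $\Omega''/\Omega$ term by parts. This gives (see \eqref{3.22} and \eqref{4july})
\[
-\int q f_y^2\,\mathrm{d}y+\int V f^2\,\mathrm{d}y
=-\int q\Bigl(f_y-\frac{\Omega'}{\Omega}f\Bigr)^2\mathrm{d}y
+\frac12\int \frac{\bigl(\Omega^2(q'-\frac y2 q)\bigr)'}{\Omega^2}\,f^2\,\mathrm{d}y .
\]
The first term on the right is minus a perfect square, and it is built from the very gradient term you threw away. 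The second term is nonpositive because \eqref{3.25} says exactly that $\Omega^2(q'-\frac y2 q)$ is nonincreasing. For \eqref{3.28} this is immediate: $\Omega^2(q'-\frac y2 q)=-(1-\Omega^2/\Omega(0)^2)$ for $y\ge 0$ and $+(1-\Omega^2/\Omega(0)^2)$ for $y\le 0$. So the property of $q$ you need is a first-order monotonicity condition used together with the gradient term, not a pointwise sign condition on the second-order potential $V$.

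Your fallback does not close the gap either. A remainder $C\int_{|y|\le R}f^2\,\mathrm{d}y$ is not allowed by \eqref{3.15a}. In $\E_\rho$ it would also be multiplied by $\rho$, so enlarging $\rho$ makes it worse. Moreover, Lemma \ref{L1} supplies no negative multiple of $\int_{|y|<M}f^2\,\mathrm{d}y$, which is precisely why $E_q$ was introduced.
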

\begin{proof}
Using the equation \eqref{scaling}, we obtain
\begin{multline*}
	\frac{d}{ds}{E_{q}}(s)
	= -\frac{e^{-s}}{a}\int_{\R}q(y)ff_{yyyy} \,{\mathrm{d}}y
	+ \int_{\R}q(y)f{\mathcal L}(f) \,{\mathrm{d}}y
	+\int_{\R}q(y)f{\mathcal N}(f) \,{\mathrm{d}}y \\
	+\frac{r^2e^{-s}}{a}
		\int_{\R}q(y)\Big( \frac{y}2(fg)_y+g^2+2\theta fg \Big)\,{\mathrm{d}}y
	-\frac{b'}{b^2} \int_{\R}q(y)fg \,{\mathrm{d}}y
	+\int_{\R}q(y)hf \,{\mathrm{d}}y.
\end{multline*}
Then by integrating by parts, we conclude that 
\begin{equation}
\label{3.18}
\frac{d}{ds}{E_{q}}(s)=\int_{\R} q(y)f{\mathcal L}(f)
{\mathrm{d}}y
+\Lambda _{q}(s),
\end{equation}
where
\begin{align}
\label{3.182}
\Lambda _{q}(s)=&-\frac{e^{-s}}{a}\int_{\R}q(y)f^2_{yy}
{\mathrm{d}}y-\frac{e^{-s}}{a}\int_{\R}q''(y)ff_{yy}
{\mathrm{d}}y+\frac{e^{-s}}{a}\int_{\R}q''(y)f_y^2
{\mathrm{d}}y\\
&+\int_{\R}q(y)f{\mathcal
  N}(f){\mathrm{d}}y  +\frac{r^2e^{-s}}{a}
\int_{\R}q(y)\Big(g^2+(2\theta 
-\frac{1}{2})fg\Big){\mathrm{d}}y \nonumber\\
 &- \frac{r^2e^{-s}}{2a}
\int_{\R}yq'(y)fg{\mathrm{d}}y
-\frac{b'}{b^2}\int_{\R}q(y)fg{\mathrm{d}}y+\int_{\R}q(y)hf{\mathrm{d}}y.\nonumber
\end{align}
Recalling the definition of ${\mathcal L}$ given by \eqref{1.17}, and integrating by parts, we get
\begin{align}\label{3.19}
	\int_{\R}q(y)f{\mathcal L}(f){\mathrm{d}}y
	&=
	-\int_{\R}q(y)f_{y}^2 \,{\mathrm{d}}y
	+\frac{1}{2} \int_{\R}\ \Big(q''(y)-\frac{y}{2}q'(y) -\frac{1}{2}q(y)\Big)f^2 \,{\mathrm{d}}y\nonumber\\
	&\quad
	+\int_{\R}\ (\theta - \f ^{p-1}(y))q(y)f^2\,{\mathrm{d}}y
	-(p-1)\int_{\R}\f ^{p-1}(y)q(y)f^2 \,{\mathrm{d}}y.
\end{align}
Now, observe that by (\ref{1.4}), we write
$\theta - \f ^{p-1}(y)=-\frac{\f''(y) +\frac{y}{2} \f'(y)} {\ \f (y)}.$
Consequently, by  using  (\ref{3.19}), we deduce that
\begin{align}\label{3.21}
	\int_{\R}q(y) f{\mathcal L}(f){\mathrm{d}}y
	&=
	\frac{1}{2}\int_{\R}\Big(q''(y)-\frac{y}{2}q'(y)-(\frac{y\f'(y)}{\ \f (y)}+\frac{1}{2})q(y)\Big)f^2 \,{\mathrm{d}}y
	-\int_{\R}q(y)f_{y}^2 \,{\mathrm{d}}y\\
\nonumber
	& - \int_{\R}\frac{ \f'' (y)}{ \f (y)}q(y)f^2 \,{\mathrm{d}}y
	-(p-1)\int_{\R}\f ^{p-1}(y)q(y)f^2 \,{\mathrm{d}}y.
\end{align}
Integrating by parts again implies
\begin{align}\label{3.22}
	\int_{\R}q(y) f{\mathcal L}(f){\mathrm{d}}y
	&=
	\frac{1}{2}\int_{\R}\Big(
		q''(y) + \left( \frac{2 \f' (y)}{\f (y)}-\frac{y}{2} \right) q'(y)
				- \left( \frac{y \f' (y)}{ \f } +\frac{1}{2} \right) q(y)
		\Big)f^2 \,{\mathrm{d}}y\nonumber\\
	&\quad
	-\int_{\R}q(y) f_{y}^2 \,{\mathrm{d}}y
	-\int_{\R} \frac{{\f'}^{2}(y)}{\f^{2}(y)} q(y)f^2 \,{\mathrm{d}}y
	+2\int_{\R}\ \frac{ \f' (y)}{ \f (y)}q(y)ff_y \,{\mathrm{d}}y \\
	&\quad
	-(p-1)\int_{\R} \f ^{p-1}(y) q(y)f^2 \,{\mathrm{d}}y.\nonumber
\end{align}
Thanks to the Schwarz inequality, we infer
\begin{equation}\label{4july}
 -\int_{\R}q(y)
f_{y}^2{\mathrm{d}}y
-\int_{\R} 
\frac{{\f'}^{2}(y)}{\f^{2}(y)}
q(y)f^2{\mathrm{d}}y
+2\int_{\R}\frac{ \f' (y)}
{\ \f (y)}q(y)ff_y
{\mathrm{d}}y\le 0.
\end{equation}
Clearly  (\ref{3.22}), \eqref{4july},   and (\ref{3.25}), gives
\begin{equation}
\label{3.22.}
\int_{\R}q(y) f{\mathcal L}(f){\mathrm{d}}y\le
-(p-1)\int_{\R}\f ^{p-1}q(y)f^2
{\mathrm{d}}y.
\end{equation}
Combining (\ref{3.18}) and (\ref{3.22.}),
we obtain the inequality
(\ref{3.15}), where
$\Lambda _{q}(s)$
is given by 
\eqref{3.182}.
To control 
$\Lambda_{q}(s)$,
we follow a similar way utilised  in the  bound of
$\Lambda_{2}^{(0,1)}(s)$.
Obviously, we use also the 
estimates (\ref{3.1}), (\ref{3.34}), \eqref{3.35}, and  \eqref{3.35bis}  useful to control the  functions  $q, yq'$ and $q''$. Hence,  we
 obtain the estimate  (\ref{3.15a}).
 This concludes the proof of Lemma \ref{L2}.
\end{proof}

Finally, combining the above two lemmas and choosing the constant $\rho$
suitably large, we have the following good estimate for $\mathbb{E}_{\rho}(s)$.
\begin{lem}\label{L03}
Let $\E_{\rho}(s)$ be defined in \eqref{3.2}.
$\E_{\rho} \in{\mathcal C}^1([s_{0},s_{0}+S])$
and there exist
$C_1>0$ 
and  $\rho_0>0$,
such that, for  all $s\in [s_{0},s_{0}+S]$,
\begin{align*}
	\frac{d}{ds} {\E}_{\rho_0}(s)
	&\le 
	- \frac{1}{2} \left( \frac{ 3}{4}-\theta \right) \int_{\R}(1+y^2)f^{2} \,{\mathrm{d}}y
	-\int_{\R}(1+y^2)f_{y}^{2} \,{\mathrm{d}}y \\
	&\quad
	- \frac{e^{-s}}{a}\int_{\R}(1+y^2)f_{yy}^2 \,{\mathrm{d}}y
	+\Lambda _{\rho_0}(s),
\end{align*}
where $\Lambda _{\rho_0}$ satisfies,
with some constant $C_{\ast}>0$,
\begin{equation}\label{3.300}
	\Lambda _{\rho_0}(s)
	\le  C_{\ast} (\kappa \delta_0)^{\bar p -1} \int_{\R}(1+y^{2})f^2 \,{\mathrm{d}}y
	+ C_{\ast} e^{-\mu s} \int_{\R} (1+y^{2})g^2 \, {\mathrm{d}}y
	+ C_{\ast} e^{-\mu s}.
\end{equation}
\end{lem}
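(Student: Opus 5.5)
We add \eqref{3.3} to $\rho$ times \eqref{3.15}, set $\Lambda_{\rho}:=\Lambda_2^{(0,1)}+\rho\Lambda_q$, and then choose $\rho=\rho_0$. Since $E_2^{(0,1)}$ and $E_q$ are $\mathcal C^1$ by Lemmas \ref{L1} and \ref{L2}, so is $\E_\rho$. We obtain
\begin{align*}
\frac{d}{ds}\E_\rho(s)&\le -\frac{e^{-s}}{a}\int_{\R}(1+y^2)f_{yy}^2\,\mathrm{d}y-\int_{\R}(1+y^2)f_y^2\,\mathrm{d}y-\Big(\frac34-\theta\Big)\int_{\R}y^2f^2\,\mathrm{d}y\\
&\quad+\Big(\theta+\frac34\Big)\int_{\R}f^2\,\mathrm{d}y-\rho(p-1)\int_{\R}\f^{p-1}qf^2\,\mathrm{d}y+\Lambda_2^{(0,1)}+\rho\Lambda_q .
\end{align*}
Recall that $\theta<\frac34$ by \textbf{Assumption (III)}. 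It is then enough to show
\[
\Big(\theta+\frac34\Big)\int_{\R} f^2\,\mathrm{d}y-\Big(\frac34-\theta\Big)\int_{\R} y^2f^2\,\mathrm{d}y-\rho(p-1)\int_{\R}\f^{p-1}qf^2\,\mathrm{d}y\le-\frac12\Big(\frac34-\theta\Big)\int_{\R}(1+y^2)f^2\,\mathrm{d}y
\]
for a suitable $\rho$. This reduces to a pointwise inequality for the weights. Set $c=\frac34-\theta>0$. We need
\[
\Big(\theta+\frac34+\frac c2\Big)-\frac c2\,y^2\le \rho(p-1)\f^{p-1}(y)q(y)\qquad\text{for all } y\in\R.
\]

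We split $\R$ at a radius $M$. Take $M$ with $\frac c2 M^2\ge \theta+\frac34+\frac c2$. For $|y|\ge M$ the left-hand side is nonpositive, and the right-hand side is nonnegative because $q\ge0$ and $\f>0$. For $|y|\le M$ the left-hand side is bounded by $\theta+\frac34+\frac c2$. We therefore need the positive lower bound
\[
\inf_{|y|\le M}\f^{p-1}(y)q(y)>0 .
\]
This should follow from the positivity and continuity of $\f$ on the compact interval $[-M,M]$, together with the properties of $q$ in Lemma \ref{L3} ($q$ continuous and strictly positive on compacts; I expect this is exactly the role of the choice \eqref{3.28}). We then fix
\[
\rho_0\ge \frac{\theta+\frac34+\frac c2}{(p-1)\inf_{|y|\le M}\f^{p-1}q}.
\]
This step is the main point of the proof, and the constant $M$ (hence $\rho_0$) depends only on $\theta$, $p$ and $\f$. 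If instead $q$ only satisfies the convexity-type inequality \eqref{3.25} and is positive only near the origin, we would adjust $M$ using the growth of $q$ given in Lemma \ref{L3}.

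Finally, the remainder satisfies $\Lambda_{\rho_0}\le \Lambda_2^{(0,1)}+\rho_0\Lambda_q$. Adding \eqref{3.300A} and $\rho_0$ times \eqref{3.15a} gives \eqref{3.300} with $C_*=C_0+\rho_0 C+C$. This constant is fixed and independent of $\kappa$ and $\delta_0$, since $\rho_0$ was chosen before any smallness condition was imposed.
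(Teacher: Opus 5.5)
Your proof is correct and follows the paper's argument. You add \eqref{3.3} and $\rho$ times \eqref{3.15}, split at a radius $M$ chosen from $\theta<\frac34$, and take $\rho_0$ large using a positive lower bound for $\Omega^{p-1}q$ on $[-M,M]$. The paper gets that bound from the monotonicity of $\Omega$ together with $\inf_{\R} q>0$, while you get it by compactness. Your pointwise formulation tracks the constant $\frac12(\frac34-\theta)$ more cleanly than the paper's integral splitting. Your fallback remark is unnecessary, since \eqref{3.34} already gives $q\ge C^{-1}>0$ on all of $\R$.
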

\begin{proof}
Clearly, for all $\rho>0$,
$\E_{\rho} \in{\mathcal C}^1([s_{0},s_{0}+S])$
and
$\dfrac{d}{ds} {\E}_{\rho}(s)=\dfrac{d}{ds} {E_{2}^{(0,1)}}(s)+\rho \dfrac{d}{ds}  {E_{q}}(s)$.
Using (\ref{3.3}) and (\ref{3.15}), we get for all $\rho>0$,
\begin{align}\label{3.42}
	\frac{d}{ds} {\E}_{\rho}(s)
	&\le
	-\frac{e^{-s}}{a}\int_{\R}(1+y^2)f_{yy}^2 \,{\mathrm{d}}y
	-\int_{\R}(1+y^2)f_{y}^{2} \,{\mathrm{d}}y
	+\Lambda_{2}^{(0,1)}(s) + \rho \Lambda_{q}(s)
	\nonumber\\
	&\quad
	+ \left( \theta  -\frac{ 3}{4} \right) \int_{\R}y^2f^{2} \,{\mathrm{d}}y 
	+ \left( \theta +\frac{3}{4} \right) \int_{\R}f^{2} \,{\mathrm{d}}y \\
	&\quad
	-\rho (p-1)\int_{\R} \f ^{p-1}q(x)f^2 \,{\mathrm{d}}y.
\end{align}
Note that, one can neglect the first term of the right-hand side in \eqref{3.42}.
Using the fact that $\theta < 3/4$, which follows from \textbf{Assumption (III)},
we see that the coefficient of the fifth term is negative.
Moreover, the terms $\Lambda _{2}^{(0,1)}(s)$ and $\rho \Lambda _{q}(s)$ are already estimated in
Lemmas \ref{L1} and \ref{L2}.
Therefore, we focus on  the control of the bad term
$\displaystyle \left( \theta +\frac{3}{4} \right) \int_{\R}f^{2} \,{\mathrm{d}}y$.
It is easy to see that
\[
	\left( \theta + \frac{3}{4} \right) \int_{|y| > M} f^2 \,\mathrm{d}y
	+ \frac{1}{2} \left( \theta  -\frac{ 3}{4} \right) \int_{|y| > M} y^2f^{2} \,{\mathrm{d}}y \le 0
\]
if $M$ is chosen so that
\begin{equation}\label{eq:M}
	\left( \theta + \frac{3}{4} \right) \dfrac{1}{M^2} + \frac{1}{2} \left( \theta  -\frac{ 3}{4} \right) \le 0.
\end{equation}
Since $\f (x)$ is even and  non increasing function in $(0,\infty)$,
we infer
\begin{equation*}
	\int_{|y|<M} \f ^{p-1}(x)q(x)f^2 \,{\mathrm{d}}y
	\ge
	\tilde C \f ^{p-1}(M) \int_{|y|<M}f^{2} \,{\mathrm{d}}y,
\end{equation*}
where
$ \tilde C=\inf_{x\in \R}q(x)$. Now we choose
 $\rho=\rho_0$ large enough so that
\begin{equation*}
	\theta +\frac{3}{4} -\rho_0(p-1)\tilde C \f ^{p-1}(M_{0}) \le - \frac{1}{2} \left( \frac{ 3}{4}-\theta \right). 
\end{equation*}
Consequently, the last three terms of \eqref{3.42} can be estimated by
\[
	- \frac{1}{2} \left( \frac{ 3}{4}-\theta \right)  \int_{\R}(1+y^{2})f^2 \,{\mathrm{d}}y.
\]
Applying this to \eqref{3.42} concludes the proof of Lemma \ref{L03}.
\end{proof}
%
%
%
%
%
%
%

\subsection{Part II: Weighted first order energies}
Since we want to control  the norm of $(f(s),g(s))$ in $Z^{1}(\R) $, 
 it is natural to introduce the following energy
functional:
\begin{equation}\label{3.51b}
	E_{1}^{(0,1)}(s)
	= \frac{1}{2} \int_{\R} (1+y^2) f_y^{2} \,\y
	+ \frac{e^{-s}}{2a} \int_{\R} (1+y^2) f_{yy}^{2} \,\y
	+\frac{r^2e^{-s}}{2a} \int_{\R} (1+y^2) g^{2} \,\y.
\end{equation}
\begin{lem}\label{L5}
Let $E_1^{(0,1)} (s)$ be defined in \eqref{3.51b}.
Then $E_{1}^{(0,1)} \in{\mathcal C}^1([s_{0},s_{0}+S])$,
and for all $s\in [s_{0},s_{0}+S]$,
\begin{align}
\label{3.52b}
	\frac{d}{ds}  {E_{1}^{(0,1)}}(s)
	&\le
	-\frac{1}{2}\int_{\R} (1+y^{ 2})g^{2} \,{\mathrm{d}}y
	- \frac{4 e^{-s}}{a} \int_{\R}yf_{yy} g_y \,\y
	+ \Lambda_1^{(0,1)}(s),
\end{align}
where $\Lambda_1^{(0,1)}(s)$ satisfies, with some constant $C_{\ast\ast}>0$,
\[
	\Lambda_1^{(0,1)}(s) \le
	C_{\ast\ast} \int_{\R}(1+y^{ 2})\big(f^{2}+f^2_{y}\big) \,{\mathrm{d}}y
	+ C_{\ast \ast} \frac{e^{-s}}{a} \int_{\R}(1+y^2)f^2_{yy} \,\y
	+ C_{\ast\ast} e^{-\mu s}.
\]
\end{lem}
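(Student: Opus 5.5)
We differentiate $E_1^{(0,1)}$ directly and use the second equation of \eqref{scalingh}, written with $g$ eliminated via $f_s=g+\frac y2 f_y+\theta f$. Set $c_1=\frac{r^2e^{-s}}{a}$ and $c_3=\frac{e^{-s}}{a}$. We have
\[
\frac{d}{ds}\Big(\frac{c_1}{2}\int(1+y^2)g^2\Big)=\frac{c_1'}{2}\int(1+y^2)g^2+c_1\int(1+y^2)g\,g_s .
\]
We substitute $c_1g_s=c_1\big(\frac y2 g_y+(\theta+1)g\big)-\frac{r'}{a}g-g-c_3f_{yyyy}+f_{yy}+k+h$, which produces the good term $-\int(1+y^2)g^2$.

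The terms coming from $f_{yy}$ and $-c_3 f_{yyyy}$ are paired with the derivatives of the first two integrals in \eqref{3.51b}. For this we write $f_{ys}=g_y+\frac12 f_y+\frac y2 f_{yy}+\theta f_y$ and $f_{yys}=g_{yy}+f_{yy}+\frac y2f_{yyy}+\theta f_{yy}$.

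Integrating by parts, $\int(1+y^2)f_yg_y+\int(1+y^2)gf_{yy}=-2\int y f_yg$. This is bounded by $\frac18\int(1+y^2)g^2+C\int(1+y^2)f_y^2$. Similarly,
\[
c_3\int(1+y^2)f_{yy}g_{yy}-c_3\int(1+y^2)gf_{yyyy}
\]
cancels up to commutator terms. After two integrations by parts these are $-4c_3\int yf_{yy}g_y$ (kept explicitly in \eqref{3.52b}, since $g_y$ is not yet controlled at this level) plus a term of the type $c_3\int f_{yy}g$. The latter is bounded by $\frac18\int g^2+Cc_3^2\int f_{yy}^2$, and $c_3^2\le c_3$ for large $s$.

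The transport terms $\int(1+y^2)f_y\,\frac y2 f_{yy}$ and similar ones are integrated by parts into $\int(1+y^2)f_y^2$ and $c_3\int(1+y^2)f_{yy}^2$, with harmless constants. The term $c_1\int(1+y^2)g\frac y2 g_y=-\frac{c_1}{4}\int(1+3y^2)g^2$, together with $c_1'$, $\frac{r'}{a}$ and $c_1(\theta+1)$, gives contributions $O(e^{-\mu s})\int(1+y^2)g^2$ by \eqref{v1}--\eqref{v2} of Lemma \ref{lem:remainder}. For $s_0$ large these are absorbed into $-\frac18\int(1+y^2)g^2$.

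For the source terms we use Young's inequality,
\[
\int(1+y^2)g(k+h)\le\frac18\int(1+y^2)g^2+C\int(1+y^2)(k^2+h^2).
\]
Lemma \ref{lem:est:h} gives $\|h\|_{L^{2,1}}^2\le Ce^{-\mu s}$. For $k$, we have the pointwise bound $|k|\le C(\Omega^{p-1}+|f|^{p-1})|f|$. Since $\Omega$ is bounded and $\|f\|_{L^\infty}\le C\kappa\delta_0<1$ by \eqref{3.1} and Sobolev embedding, this gives $\int(1+y^2)k^2\le C\int(1+y^2)f^2$.

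Collecting everything, the total coefficient of $-\int(1+y^2)g^2$ is at least $1-\frac18\cdot3-o(1)\ge\frac12$. This yields \eqref{3.52b} with $\Lambda_1^{(0,1)}$ of the stated form.

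The main obstacle is the fourth-order commutator bookkeeping: showing that all terms with $f_{yyy}$ or $g_{yy}$ either cancel exactly or reduce to $c_3\int yf_{yy}g_y$ and controllable terms, without needing $\int f_{yyy}^2$ at this level. I would first carry out the computation with $1+y^2$ replaced by a general weight $\varphi$, recording the terms $\varphi'$ and $\varphi''$, and then specialize to $\varphi=1+y^2$. The justification of the $\mathcal C^1$ regularity and of the integrations by parts is done by the usual density argument with the regularity of Appendix B.
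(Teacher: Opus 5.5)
Your proposal is correct and follows essentially the same route as the paper. The paper obtains the identities for $\frac{d}{ds}E_1^{(0)}$ and $\frac{d}{ds}E_1^{(1)}$ from the general energy identity of Lemma \ref{LG}, which is exactly your integration-by-parts computation, justified by mollification. It then closes with Cauchy--Schwarz, \eqref{v1}, \eqref{v2}, \eqref{3.1}, \eqref{h02fk} and \eqref{h}, as you do with the combined weight $1+y^2$.

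One term is missing from your list: $\frac{1}{2}\frac{d}{ds}\big(\frac{e^{-s}}{a}\big)\int(1+y^2)f_{yy}^2\,\mathrm{d}y$. It is harmless, since \eqref{eq:c3prime} and \eqref{v2} give $\big|\frac{a'}{ra^2}\big|\le C\frac{e^{-s}}{a}$. It is therefore absorbed into the $C_{\ast\ast}\frac{e^{-s}}{a}\int(1+y^2)f_{yy}^2\,\mathrm{d}y$ part of $\Lambda_1^{(0,1)}$.
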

\begin{proof}
For $n=0, 1$, let
\[
	E_1^{(n)}(s) :=
	\frac{1}{2} \int_{\R} y^{2n} f_y^{2} \,\y
	+ \frac{e^{-s}}{2a} \int_{\R} y^{2n} f_{yy}^{2} \,\y
	+\frac{r^2e^{-s}}{2a} \int_{\R} y^{2n} g^{2} \,\y.
\]
By applying Lemma \ref{LG} with 
$c_1(s)=\dfrac{r^2e^{-s}}{a}$,
$c_2(s)=\dfrac{r'}{a}$,
$c_3(s)=\dfrac{e^{-s}}{a}$,
$l=\theta$, $m=\theta+1$,
and also using \eqref{eq:c1prime} and \eqref{eq:c3prime}, we have
\begin{multline}
	\frac{d}{ds}E_{1}^{(0)} (s)
	=
	-\int_{\R}g^2 \, \y + \left( \theta+\frac14 \right) \int_{\R}f_y^2 \,\y
	+ \left( \theta+\frac{1}{4} \right) \frac{e^{-s}}{a} \int_{\R}f^2_{yy} \,\y\nonumber\\
	+ \left( \theta+\frac{1}{4} \right) \frac{r^2e^{-s}}{a} \int_{\R}g^2 \,\y 
	-\frac{r a'}{2a^2} \int_{\R}g^2 \,\y
	-\frac{a'}{2ra^2} \int_{\R}f_{yy}^2 \,\y
	+ \int_{\R} kg \,\y + \int_{\R} hg \,\y.
\end{multline}
%
%
Using the Cauchy--Schwarz inequality, \eqref{v1},  \eqref{v2}, and  \eqref{3.1}, we show that
\begin{equation}
\label{3.55b}
\frac{d}{ds} E_{1}^{(0)}(t) \le
-\frac{1}{2}\int_{\R}g^2{\mathrm{d}}y
+C\int_{\R}\big(
f^2+f_{y}^2\big)
{\mathrm{d}}y+C\frac{e^{-s}}{a}
 \displaystyle\int_{\R}f^2_{yy}\y
+C\displaystyle\int_{\R}(k^2+h^2)\y.
\end{equation}
Likewise, $E_{1}^{(1)}$ is differentiable in $s \in
[s_{0},s_{0}+S]$ and
\begin{multline}
\frac{d}{ds}E_{1}^{(1)}(s)=-\displaystyle\int_{\R}y^2 g^2\y +
\left( \theta-\frac{1}4 \right) \displaystyle\int_{\R}y^2f_y^2\y
+ \left( \theta-\frac{1}4 \right) \frac{e^{-s}}{a}\displaystyle\int_{\R}y^2f^2_{yy}\y\nonumber\\
+\left( \theta-\frac{1}4 \right) \frac{r^2e^{-s}}{a}\displaystyle\int_{\R}y^2g^2
  \y -
\frac{a'r}{2a^2}
\displaystyle\int_{\R}y^2g^2\y
- \frac{4e^{-s}}{a}\displaystyle\int_{\R}yf_{yy}g_y\y\\
-\frac{2e^{-s}}{a}\displaystyle\int_{\R}f_{yy}
g\y
-2\int_{\R}y
f_{y}g\y
 -\frac{a'}{2a^2r}\displaystyle\int_{\R}y^2f_{yy}^2
\y  +
\int_{\R}
y^2kg\y+
\int_{\R}
y^2hg\y.
\end{multline}
Similarly, we conclude  that there
exists $C>0$
such that,
\begin{multline}
\label{3.58b}
\frac{d}{ds} E_{1}^{(1)}(s) \le
 -\frac{1}{2}\int_{\R}y^2g^{2}
{\mathrm{d}}y
-\frac{4 e^{-s}}{a}\displaystyle\int_{\R}yf_{yy}
g_y\y+C\int_{\R}(1+y^{ 2})\big(f^{2}+
f^2_{y}\big)
{\mathrm{d}}y\\
 +C\frac{e^{-s}}{a}\displaystyle\int_{\R}(1+y^2)f^2_{yy}\y
+C e^{-s}\int_{\R}
g^2{\mathrm{d}}y
+C\displaystyle\int_{\R}y^2(k^2+h^2)\y.
\end{multline}
Consequently, by choosing $s_0$ large enough,   (\ref{3.55b}),
(\ref{3.58b}), \eqref{h02fk}, and \eqref{h},   we get (\ref{3.52b}). 
 This concludes the proof of Lemma \ref{L5}.
\end{proof}

\subsection{Part III: Higher-order energies}
Finally,  to control the bad term $-\dfrac{4 e^{-s}}{a}\displaystyle\int_{\R}yf_{yy}
g_y\y,$ we define  the following functionals:
\begin{equation}
\label{3.2bisb}
\begin{alignedat}{3}
	&E_{2}^{\langle 1 \rangle, (0)} (s) &&=\frac{1}{2} \int_{\R}f_{y}^{2} \,{\mathrm{d}}y
	+ \frac{r^2e^{-s}}{a}\int_{\R}f_{y} g_{y} \,{\mathrm{d}}y,\\
	&E_{1}^{\langle 1 \rangle, (0)} (s) &&=\frac{1}{2} \int_{\R} f_{yy}^{2} \,\y
	+ \frac{e^{-s}}{2a} \int_{\R}f_{yyy}^{2} \,\y
	+ \frac{r^2e^{-s}}{2a} \int_{\R}g_{y}^{2} \,\y,\\
	&\E_{\vartheta, \zeta, \omega  }(s) &&= \mathbb{E}_{\rho_0}(s)
	+ \vartheta E_{1}^{(0,1)}(s) + \zeta E_{2}^{\langle 1 \rangle, (0)} (s) + \omega E_{1}^{\langle 1 \rangle, (0)} (s),
\end{alignedat}
\end{equation}
where $\vartheta, \zeta, \omega  >0$ will be chosen later.

Now, we shall observe that 
by differentiating \eqref{scaling}, we infer that
 $(f_y,g_y)$ satisfy the  following system: 
\begin{equation}\label{SG1}
	\left\{
	\begin{alignedat}{2}
		&g_y=(f_y)_{s}-\frac{y}{2}(f_y)_y- \left( \theta+\frac12 \right)f_y,\\
		&c_1(s)\Big((g_y)_s-\frac{y}2(g_y)_y- \left( \theta+\frac32 \right) g_y \Big)+c_2(s)g_y
			+g_y=-c_3(s)f_{yyyyy}+f_{yyy}+k_y+h_y.
	\end{alignedat}
	\right.
\end{equation}
By applying Lemma \ref{SG} with
$c_1(s)=\dfrac{r^2e^{-s}}{a}$, $c_2(s)=\dfrac{r'}{a}$, $c_3(s)=\dfrac{e^{-s}}{a}$,
$l=\theta+\frac12$ and $m=\theta+\frac32$,
and also using \eqref{eq:c1prime} and \eqref{eq:c3prime}, we have the following.
\begin{lem}\label{L6}
Let
$E_{2}^{\langle 1 \rangle, (0)}(s)$
be defined in \eqref{3.2bisb}.
Then,
$E_{2}^{\langle 1 \rangle, (0)} \in{\mathcal C}^1([s_{0},s_{0}+S])$
and there exists $C_{\sharp} > 0$ such that,
for all $s\in [s_{0},s_{0}+S]$,
\begin{align}\label{3.3b}
	\frac{d}{ds}E_{2}^{\langle 1\rangle, (0)}(s)
	&=-\frac12\int_{\R} f_{yy}^2 \,{\mathrm{d}}y
	-\frac{e^{-s}}{a}\int_{\R}f_{yyy}^2 \,{\mathrm{d}}y
	+ \Lambda_{2}^{\langle 1\rangle, (0)} (s),
\end{align}
where $\Lambda_{2}^{\langle 1\rangle, (0)}$ satisfies
\begin{align}\label{3.300Ab}
	\Lambda_{2}^{\langle 1\rangle, (0)}(s)
	&\le  C_{\sharp} \int_{\R}(1+y^2) f^2 \,{\mathrm{d}}y
		+C_{\sharp} \int_{\R}f_y^2 \, {\mathrm{d}}y 
		+C_{\sharp} e^{-\mu s} \int_{\R}g^2 \,{\mathrm{d}}y
		+C_{\sharp} e^{-\mu s}.
\end{align}
\end{lem}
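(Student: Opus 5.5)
We apply the general energy identity of Lemma \ref{SG} to the differentiated system \eqref{SG1}, viewing $(f_y,g_y)$ as the unknowns with $c_1=\frac{r^2e^{-s}}{a}$, $c_2=\frac{r'}{a}$, $c_3=\frac{e^{-s}}{a}$, $l=\theta+\frac12$, $m=\theta+\frac32$. This is the same computation as for $E_2^{(0)}$ in \eqref{3.7}, with $f$ replaced by $f_y$. Multiplying the first equation by $f_y$ and the second by $f_y$, and integrating by parts in the transport terms $-\frac y2 (f_y)_y$ and $-\frac y2 (g_y)_y$, gives an identity of the form
\begin{align*}
\frac{d}{ds}E_{2}^{\langle 1\rangle,(0)}
&= -c_3\int_{\R} f_{yyy}^2\,\y-\int_{\R} f_{yy}^2\,\y+\Big(\theta+\frac12-\frac14\Big)\int_{\R} f_y^2\,\y
+c_1\int_{\R}\big(g_y^2+(\cdots)f_yg_y\big)\,\y\\
&\quad -\frac{b'}{b^2}\int_{\R} f_yg_y\,\y+\int_{\R} k_yf_y\,\y+\int_{\R} h_yf_y\,\y,
\end{align*}
where \eqref{eq:c1prime} and \eqref{eq:c3prime} are used to express $c_1'$ and $c_3'$. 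The terms $-c_3\int f_{yyy}^2$ and $-\int f_{yy}^2$ are already the good terms in \eqref{3.3b}. We only keep $-\frac12\int f_{yy}^2$, so that half of this term is left over to absorb errors. Everything else is put into $\Lambda_2^{\langle1\rangle,(0)}$.

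The $c_1$-terms and the $b'/b^2$-term are bounded by $Ce^{-\mu s}\big(\|f_y\|^2+\|g_y\|^2\big)$, using \eqref{v1} and \eqref{v2}. The stated bound \eqref{3.300Ab} contains $\int g^2$ but not $\int g_y^2$, so we would integrate by parts: $\int f_yg_y = -\int f_{yy}g$. Then Young's inequality gives $e^{-\mu s}(\|f_{yy}\|^2+\|g\|^2)$, and the $f_{yy}$ part is absorbed into the leftover $-\frac12\int f_{yy}^2$ once $s_0$ is large. The term $c_1\int g_y^2$ has a good sign only if it appears with a minus sign. If it appears with a plus sign, it is an $O(e^{-\mu s})$ multiple of the $g_y$-part of $E_1^{\langle1\rangle,(0)}$ and would be absorbed when the functionals are combined later. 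For the lemma as stated, we would instead integrate by parts in the second equation to avoid producing $g_y^2$ at all.

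For the $h$-term, we write $\int h_yf_y=-\int hf_{yy}\le \frac18\int f_{yy}^2+C\|h\|_{L^2}^2$ and use Lemma \ref{lem:est:h}, which gives $\|h\|^2\le Ce^{-\mu s}$. For the nonlinear term, $k_y=-p|f+\Omega|^{p-1}(f_y+\Omega')+p\Omega^{p-1}\Omega'$. We write $\int k_yf_y=-\int kf_{yy}$. Since $|k|\le C(\Omega^{p-1}|f|+|f|^p)$ and $\Omega$ is bounded, \eqref{3.1} together with the Sobolev embedding $\|f\|_{L^\infty}\le C\kappa\delta_0<1$ gives $|k|\le C|f|$. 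Hence $|\int kf_{yy}|\le \frac18\int f_{yy}^2+C\int f^2$.

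Collecting everything: $\int f_y^2$ carries the constant coefficient $\theta+\frac14$, and $\int f^2\le\int(1+y^2)f^2$. Together with the estimates above, this gives \eqref{3.300Ab}.

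The main obstacle is bookkeeping the exact $c_1$-dependent coefficients in Lemma \ref{SG} so that no $\int g_y^2$ or $\int f_{yy}^2$ term with a non-small coefficient escapes. The crucial point is to trade $f_y$-derivatives via integration by parts onto $f_{yy}$ and absorb them into the half of the dissipation $-\int f_{yy}^2$ that was kept in reserve.
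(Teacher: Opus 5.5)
Your identity (Lemma \ref{LG} applied to \eqref{SG1} with $l=\theta+\frac12$, $m=\theta+\frac32$, and \eqref{eq:c1prime} giving $c_1'-c_2=-c_1-\frac{b'}{b^2}$) is the paper's, and so is your treatment of the $h$, $k$ and $(\theta+\frac14)\int f_y^2$ terms via $-\int hf_{yy}$, $-\int kf_{yy}$ and the reserved $-\frac12\int f_{yy}^2$. Your step $\int f_yg_y=-\int f_{yy}g$, with $e^{-\mu s}\|f_{yy}\|^2$ absorbed, is a correct refinement that the paper leaves implicit.

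\textbf{The gap is the term $+\frac{r^2e^{-s}}{a}\int_{\R} g_y^2\,\y$, which does appear with a plus sign.} It does not come from the second equation of \eqref{SG1}. It arises when $\frac{d}{ds}$ hits $f_y$ in the cross term $c_1\int f_yg_y$, through $(f_y)_s=g_y+\frac y2 f_{yy}+(\theta+\frac12)f_y$ from the first equation. So no manipulation of the second equation avoids it. Integrating by parts only turns it into $-c_1\int g\,g_{yy}$, which is worse. Nothing on the right-hand side of \eqref{3.300Ab} controls $\|g_y\|_{L^2}$, and neither does the bootstrap assumption \eqref{3.1}, which constrains only $f$. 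Hence your plan to prove ``the lemma as stated'' by integrating by parts in the second equation fails.

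The paper's own proof has exactly the same defect. Its $\Lambda_2^{\langle1\rangle,(0)}$ explicitly contains $+\frac{r^2e^{-s}}{a}\int g_y^2$, and the bound is asserted by ``the Schwarz inequality'' alone. What can actually be proved is \eqref{3.300Ab} with an additional $C_\sharp e^{-\mu s}\int_{\R} g_y^2\,\y$ on the right.

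Your first fallback is the right fix, but phrase it correctly. This term is not an $e^{-\mu s}$-small multiple of the energy $E_1^{\langle1\rangle,(0)}$: the $g_y$-part of that energy is $\frac{c_1}{2}\int g_y^2$, so $c_1\int g_y^2$ is twice it. It is an $e^{-\mu s}$-small multiple of the dissipation $-\frac{\omega}{2}\int g_y^2$ supplied by Lemma \ref{L7}. It is absorbed there in Proposition \ref{Prop11}, because $s_0$ is chosen after $\zeta_0$ and $\omega_0$. You should therefore state and prove this modified bound rather than claim \eqref{3.300Ab} verbatim.
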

\begin{proof}
$E_{2}^{\langle 1\rangle, (0)}$ is
a differentiable function for  $ s \in [s_0,s_0+S]$
and  as explained above, by applying Lemma \ref{SG}, we infer
\begin{align*}
	\frac{d}{ds}E_{2}^{\langle 1\rangle, (0)}(s)(s)
	&=
	-\int_{\R}f^2_{yy}\,\y
	+ \left( \theta+\frac{1}4 \right) \int_{\R}f_y^2 \, \y
	+ \left( 2\theta+\frac{1}2 \right) \frac{r^2e^{-s}}{a} \int_{\R}f_{y}g_y\y \\
	&\quad
	- \frac{e^{-s}}{a}\displaystyle\int_{\R}f^2_{yyy} \,\y
	+ \frac{r^2e^{-s}}{a} \int_{\R}g_y^2 \,\y 
	+ \left( \frac{r'}{a}-\frac{a'r}{a^2} \right) \int_{\R}f_{y}g_y \,\y \\
	&\quad
	+\int_{\R} h_yf_y \,\y
	+\int_{\R} k_yf_y \,\y.
\end{align*}
Integrating  by parts, 
we obtain the equality
\eqref{3.3b}, where
$\Lambda_{2}^{\langle 1\rangle, (0)}(s)$ is given by 
\begin{align*}
	\Lambda_{2}^{\langle 1\rangle, (0)}(s)
	&=
	-\frac12\int_{\R}f^2_{yy} \,\y
	+ \left( \theta+\frac{1}4 \right) \int_{\R}f_y^2 \,\y
	+ \left( 2\theta+\frac{1}2 \right) \frac{r^2e^{-s}}{a} \int_{\R}f_{y}g_y \,\y \\
	&\quad
	+ \frac{r^2e^{-s}}{a} \int_{\R}g_y^2 \,\y
	+ \left( \frac{r'}{a}-\frac{a'r}{a^2} \right) \int_{\R}f_{y}g_y \,\y
	-\int_{\R} hf_{yy} \,\y -\int_{\R} kf_{yy} \,\y.
\end{align*}
Applying the Schwarz inequality and
the estimates \eqref{v1}, \eqref{v2}, \eqref{h}, and  
\eqref{h02fk}, we get \eqref{3.300Ab}. 
 This concludes the proof of Lemma \ref{L6}.
\end{proof}

%
%
%
\begin{lem}\label{L7}
Let
$E_1^{\langle 1 \rangle, (0)}(s)$
be defined by \eqref{3.2bisb}.
Then
$E_1^{\langle 1 \rangle, (0)} \in{\mathcal C}^1([s_{0},s_{0}+S])$
and there exists
$C_{\flat}>0$
such that, for all
$s\in [s_{0},s_{0}+S]$,
\begin{equation}\label{A0}
	\frac{d}{ds}  E_1^{\langle 1 \rangle, (0)} (s) =
	- \frac{1}{2}\int_{\R}g_y^{2} \,{\mathrm{d}}y
	+ \Lambda_{1}^{\langle 1\rangle, (0)}(s),
\end{equation}
where,
\begin{equation}\label{01A0}
	\Lambda_{1}^{\langle 1\rangle, (0)}(s)
	\le
	C_{\flat} \int_{\R}(f^2+f_y^2+f^2_{yy}) \,\y
	+ C_{\flat} \frac{ e^{- s}}{a} \int_{\R}f^2_{yyy} \,\y
	+ C_{\flat} e^{-\mu s}.
\end{equation}
\end{lem}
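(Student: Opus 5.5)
We will proceed exactly as for Lemma \ref{L5}, but now for the differentiated system \eqref{SG1} satisfied by $(f_y,g_y)$ and without spatial weights. The quantity $E_1^{\langle 1\rangle,(0)}$ is the unweighted ($n=0$) first-order energy $E_1^{(0)}$ of Lemma \ref{L5}, with $(f,g)$ replaced by $(f_y,g_y)$. So we apply the general identity of Lemma \ref{SG} (the first-order analogue of Lemma \ref{LG}) to \eqref{SG1} with
\[
c_1(s)=\dfrac{r^2e^{-s}}{a},\qquad c_2(s)=\dfrac{r'}{a},\qquad c_3(s)=\dfrac{e^{-s}}{a},\qquad l=\theta+\frac12,\qquad m=\theta+\frac32,
\]
together with \eqref{eq:c1prime} and \eqref{eq:c3prime}. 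Differentiability in $s$ follows as before from the regularity of the solution, via the local theory in Appendix B and a density argument.

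Formally, we multiply the second equation of \eqref{SG1} by $g_y$ and substitute $g_y=(f_y)_s-\frac y2 f_{yy}-(\theta+\frac12)f_y$ in the terms $f_{yyy}g_y$ and $-c_3f_{yyyyy}g_y$. After integrating by parts in $y$, this should produce the identity
\begin{align*}
\frac{d}{ds}E_1^{\langle 1\rangle,(0)}
&=-\int_{\R} g_y^2\,\y+\Big(\theta+\frac34\Big)\int_{\R} f_{yy}^2\,\y+\Big(\theta+\frac34\Big)\frac{e^{-s}}{a}\int_{\R} f_{yyy}^2\,\y\\
&\quad+\Big(\theta+\frac34\Big)c_1\int_{\R} g_y^2\,\y-\frac{ra'}{2a^2}\int_{\R} g_y^2\,\y-\frac{a'}{2ra^2}\int_{\R} f_{yyy}^2\,\y\\
&\quad+\int_{\R} k_yg_y\,\y+\int_{\R} h_yg_y\,\y .
\end{align*}
The constants come from the $l,m$ shifts. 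The precise values are irrelevant, since every such term is either of the allowed form or carries a decaying factor. We then estimate term by term, writing the right-hand side as $-\frac12\int g_y^2$ plus $\Lambda_1^{\langle1\rangle,(0)}$:
\begin{itemize}
\item The $f_{yy}^2$ and $\frac{e^{-s}}{a}f_{yyy}^2$ terms go directly into $\Lambda_1^{\langle1\rangle,(0)}$, with constant $C_\flat$.
\item The factor $\frac{a'}{2ra^2}$ equals $\frac{e^{-s}}{a}$ times a bounded quantity, by \eqref{v1}–\eqref{v2}, \eqref{H1} and the relation between $e^{s}$ and $1+R(t)$. So that term is also bounded by $C\frac{e^{-s}}{a}\int f_{yyy}^2$.
\item The coefficients $c_1$ and $\frac{ra'}{a^2}$ are $O(e^{-\mu s})$, again by \eqref{v1}–\eqref{v2}. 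Choosing $s_0$ large, their contribution is absorbed into $\frac14\int g_y^2$.
\item The source terms are handled by Young's inequality: $\int(k_y+h_y)g_y\le\frac14\int g_y^2+C\int(k_y^2+h_y^2)$.
\end{itemize}

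The main obstacle is controlling $\int k_y^2$ and $\int h_y^2$. For $h_y$ we invoke Lemma \ref{lem:est:h} and \eqref{h}, which give $\|h\|_{Y^1}^2\le Ce^{-\mu s}$; in particular $\|h_y\|_{L^2}^2\le Ce^{-\mu s}$. This is exactly where the right-hand inequality in \textbf{Assumption (III)} is used. For $k$ we write
\[
k_y=-p|f+\f|^{p-1}(f_y+\f')+p\f^{p-1}\f'=-p|f+\f|^{p-1}f_y-p\big(|f+\f|^{p-1}-\f^{p-1}\big)\f'.
\]
By the a priori bound \eqref{3.1} and the embedding $H^1\subset L^\infty$, $\|f\|_{L^\infty}\le C\kappa\delta_0<C$. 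Since $\f$ is bounded, the first piece is bounded by $C|f_y|$. For the second piece we use that $\f'$ is bounded (Lemma \ref{L3bis1}) and the elementary inequality $\big||a|^{p-1}-|b|^{p-1}\big|\le C|a-b|^{\min(p-1,1)}$ for bounded arguments. When $p\ge2$ this gives $C|f|$ directly. When $1<p<2$ we instead use $|f|^{p-1}|\f'|$ together with the decay of $\f'$ relative to $\f$ from Lemma \ref{L3bis1}. Alternatively, we can simply appeal to the estimate \eqref{h02fk} already used in Lemmas \ref{L5} and \ref{L6}, which bounds $\int(1+y^2)(k^2+k_y^2)$ by $C\int(f^2+f_y^2)+Ce^{-\mu s}$. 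Either route gives $\int k_y^2\le C\int(f^2+f_y^2)\,\y+Ce^{-\mu s}$.

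Collecting these bounds yields the identity stated in Lemma \ref{L7} with $\Lambda_1^{\langle1\rangle,(0)}$ satisfying \eqref{01A0}.
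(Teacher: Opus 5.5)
Your overall route is the paper's. You apply the energy identity of Lemma \ref{LG} to the differentiated system \eqref{SG1}, and your identity coincides with the paper's. You then absorb the $O(e^{-\mu s})$ coefficients of $\int g_y^2$, use Young's inequality on the source terms, and bound $\|h_y\|_{L^2}$ and $\|k_y\|_{L^2}$. There is, however, one step that fails as written. It is the bound on $k_2=-p\big(|f+\Omega|^{p-1}-\Omega^{p-1}\big)\Omega'$ when $1<p<2$.

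The H\"older-type inequality you quote yields only $|k_2|\le C|f|^{p-1}|\Omega'|$, and this cannot give $\|k_2\|_{L^2}\le C\|f\|_{L^2}$. For small $f$ one has $|f|^{p-1}\gg |f|$: taking $f=\varepsilon\varphi$ gives a bound of order $\varepsilon^{p-1}$ against $\|f\|_{L^2}$ of order $\varepsilon$. So at best you get $\|k_2\|_{L^2}\lesssim \|f\|_{L^2}^{p-1}$, which does not have the form \eqref{01A0} and would break the exponential-decay argument in Proposition \ref{Prop11}.

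The missing idea is to use $\Omega>0$ to obtain a \emph{linear} bound carrying the factor $\Omega^{p-2}$.
\begin{itemize}
\item Write $|f+\Omega|^{p-1}-\Omega^{p-1}=\Omega^{p-1}N_0(f/\Omega)$ with $N_0(X)=|1+X|^{p-1}-1$.
\item For $p<2$ one has $|N_0(X)|\le C|X|$ for all $X\in\R$. Near $X=0$ this follows from smoothness; for $|X|\ge\frac12$ it follows from $|N_0(X)|\le C(1+|X|^{p-1})\le C|X|$.
\item Hence $|k_2|\le C\,|\Omega'/\Omega|\,\Omega^{p-1}|f|\le C|f|$, because $\Omega'/\Omega$ is bounded by Lemma \ref{L3bis1} (it behaves like $-2\theta/z$ at infinity).
\end{itemize}
Your remark about ``the decay of $\Omega'$ relative to $\Omega$'' points in this direction. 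But the inequality $|N_0(X)|\le C|X|$ must replace the $|f|^{p-1}$ bound, not supplement it.

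A few citations should also be fixed. Lemma \ref{lem:est:h} and \eqref{h} only give $\|h\|_{L^{2,1}}\le Ce^{-\mu s}$. The bound on $h_y$ that you need is \eqref{hbis}. That bound requires only unweighted $L^2$-integrability of $\partial_y h_1$, so it does not use the right-hand inequality of Assumption (III); that inequality is needed for the weight in $\|h_1\|_{L^{2,1}}$. Your ``alternative'' via \eqref{h02fk} is not available either. That estimate is $\|k\|_{L^{2,1}}\le C\|f\|_{L^{2,1}}$ and says nothing about $k_y$. The $k_y$ bound must come from the decomposition \eqref{kprime} together with the argument above.
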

\begin{proof}
The functional
$E_{1}^{\langle 1 \rangle,(0)}$
is of class
${\mathcal C}^1([s_{0},s_{0}+S])$.
Similarly to the proof of Lemma \ref{L6}, by applying Lemma \ref{LG}, we obtain
%
%
\begin{multline}
	\frac{d}{ds} E_{1}^{\langle 1\rangle, (0)}(s)
	= - \int_{\R}g_y^2 \,\y + \left( \theta+\frac34 \right) \int_{\R}f_{yy}^2 \,\y
	 + \left( \theta+\frac{3}4 \right) \frac{e^{-s}}{a} \int_{\R}f^2_{yyy} \,\y\nonumber\\
	+ \left( \theta+\frac{3}4 \right) \frac{r^2e^{-s}}{a} \int_{\R}g_y^2 \,\y 
	- \frac{a'r}{2a^2} \int_{\R}g_y^2 \,\y
	- \frac{a'}{2ra^2} \int_{\R}f_{yyy}^2 \,\y
	+ \int_{\R}h_yg_y \,\y
	+ \int_{\R}k_yg_y \,\y.
\end{multline}
Hence, we deduce the equality \eqref{A0}, where
$\Lambda_{1}^{\langle 1 \rangle, (0)}(s)$
is given by 
\begin{multline}
	\Lambda_{1}^{\langle 1\rangle, (0)} (s)
	=
	- \frac12 \int_{\R}g_y^2 \,\y
	+ \left( \theta+\frac34 \right) \int_{\R}f_{yy}^2 \,\y
	+ \left( \theta+\frac{3}4 \right) \frac{e^{-s}}{a} \int_{\R}f^2_{yyy} \,\y\nonumber\\
	+ \left( \theta+\frac{3}4 \right) \frac{r^2e^{-s}}{a} \int_{\R}g_y^2 \,\y 
	- \frac{a'r}{2a^2} \int_{\R}g_y^2 \,\y
	- \frac{a'}{2ra^2} \int_{\R}f_{yyy}^2 \,\y
	+ \int_{\R}h_yg_y \,\y
	+ \int_{\R}k_yg_y \,\y.
\end{multline}
We then infer from the Schwarz inequality,
the estimates \eqref{v1}, and \eqref{v2}, that
\begin{equation}\label{fg1}
	\Lambda_{1}^{\langle 1\rangle, (0)} (s)
	\le
	C \int_{\R}f_{yy}^2 \,\y
	+ C\frac{e^{-s}}{a} \int_{\R}f^2_{yyy} \,\y
	+ \int_{\R}(h_y^2+k_y^2) \,\y.
\end{equation}

Finally, using  \eqref{fg1}, \eqref{hbis}, \eqref{kprime}, \eqref{k1bb}  and \eqref{k1b},  we deduce 
\eqref{01A0}. 
 This concludes the proof of Lemma \ref{L7}.
 \end{proof}

\subsection{A priori estimate}
Putting all the above energy estimates together,
we reach the following a priori estimate for $(f,g)$.

\begin{prop}\label{Prop11}
We suppose that
$\kappa \delta_{0}$ is small enough (depending only
on $ p$),  $s_0>0$
large enough.
There exist a positive constant
$K_0>0$ and $\mu_0>0$ such that,
for any solution  $ (f,g)\in {\mathcal C}^{0}([s_{0},s_{0}+S],Z^1(\R))$  of (\ref{scaling})
satisfying (\ref{3.1}) with
$(f_{0},g_{0})=(f(s_{0}),g(s_{0})) \in Z^1(\R)$,
chosen
so that
$\Phi (e^{-s_{0}},f_{0},g_{0})<\delta_{0}^2$,
we have, for all
$s\in[s_0,s_{0}+S]$,
\begin{align}\label{P66}
	&\Phi (s,f(s),g(s))
	+ \int_{s_0}^s e^{-\mu_0(s-\tau)}
		\left(
		\|g(\tau)\|^2_{L^{2.1}(\R)} + \|f_{yy}(\tau) \|_{L^2(\R)}^2+\|g_{y}(\tau)\|_{L^2(\R)}^2
		\right) \,d\tau \\
	&\quad \le
	K_0 \left( \Phi (s_0,f(s_0),g(s_0)) +e^{-\mu s_0} \right) e^{-\mu_{0}(s-s_{0})}.
\end{align}
\end{prop}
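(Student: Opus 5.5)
We combine the functional $\E_{\vartheta,\zeta,\omega}(s)$ from \eqref{3.2bisb}, tuning $\vartheta,\zeta,\omega$ so that it is equivalent to $\Phi$ and satisfies a linear differential inequality with a strictly negative rate. The choice goes in descending order of dominance, fixing one parameter at a time. Lemma \ref{L03} gives dissipation $-\frac12(\frac34-\theta)\int(1+y^2)f^2-\int(1+y^2)f_y^2-\frac{e^{-s}}{a}\int(1+y^2)f_{yy}^2$. Take $\vartheta$ small with $\vartheta C_{\ast\ast}\le \frac14\min(\frac12(\frac34-\theta),1)$. Then $\vartheta$ times Lemma \ref{L5} contributes $-\frac{\vartheta}{2}\int(1+y^2)g^2$, and its error terms are absorbed by the $\mathbb{E}_{\rho_0}$ dissipation.

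Next choose $\zeta$ small relative to $\vartheta$ and the constants of Lemma \ref{L03}. Then $\zeta$ times Lemma \ref{L6} yields $-\frac{\zeta}{2}\int f_{yy}^2-\zeta\frac{e^{-s}}{a}\int f_{yyy}^2$, and its errors $\zeta C_\sharp\int((1+y^2)f^2+f_y^2)$ are absorbed. Finally choose $\omega\ll\zeta$. Then $\omega$ times Lemma \ref{L7} gives $-\frac{\omega}{2}\int g_y^2$, and its errors $\omega C_\flat(\int f_{yy}^2+\frac{e^{-s}}{a}\int f_{yyy}^2)$ are absorbed by the $\zeta$-terms.

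The cross term $-\vartheta\frac{4e^{-s}}{a}\int yf_{yy}g_y$ from Lemma \ref{L5} is the main point. By Young it is bounded by
\[
\varepsilon_1\int g_y^2+C\varepsilon_1^{-1}\vartheta^2\frac{e^{-2s}}{a^2}\int y^2f_{yy}^2 .
\]
The factor $\frac{e^{-s}}{a}$ is $O(e^{-\mu s})$ by \eqref{v1}–\eqref{v2}, so for $s_0$ large the second piece is a small multiple of $\frac{e^{-s}}{a}\int y^2f_{yy}^2$, which the $\mathbb{E}_{\rho_0}$ dissipation absorbs. The first piece is absorbed by $\frac{\omega}{2}\int g_y^2$ with $\varepsilon_1=\omega/4$; the required largeness of $s_0$ depends on $\omega,\vartheta$, which is fine since they are fixed first.

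The smallness terms $C_\ast(\kappa\delta_0)^{\bar p-1}\int(1+y^2)f^2$ are absorbed by taking $\kappa\delta_0$ small. The terms $C e^{-\mu s}\int(1+y^2)g^2$ and $C e^{-\mu s}\int g^2$ are absorbed by $\frac{\vartheta}{2}\int(1+y^2)g^2$ for $s_0$ large. The outcome is
\[
\frac{d}{ds}\E_{\vartheta,\zeta,\omega}+c\,D(s)\le Ce^{-\mu s},
\]
where $D$ contains $\int(1+y^2)(f^2+f_y^2+g^2)+\int f_{yy}^2+\int g_y^2+\frac{e^{-s}}{a}\int((1+y^2)f_{yy}^2+f_{yyy}^2)$.

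It remains to show $\E_{\vartheta,\zeta,\omega}\simeq\Phi$. The mixed terms $\frac{r^2e^{-s}}{a}\int(1+y^2+\rho_0 q)fg$ and $\zeta\frac{r^2e^{-s}}{a}\int f_yg_y$ are bounded by Cauchy–Schwarz. Their coefficient is small for large $s_0$ by \eqref{v1}, and $q\lesssim 1+y^2$ by Lemma \ref{L3}, so they are dominated by the positive quadratic parts. The remaining pieces of $\Phi$ are $\frac{r^2e^{-s}}{a}\int(1+y^2)g^2$ and $\frac{r^2e^{-s}}{a}\int g_y^2$. In $\E_{\vartheta,\zeta,\omega}$ these appear with coefficients $\vartheta/2$ and $\omega/2$, together with $\frac{e^{-s}}{a}\int(y^2f_{yy}^2+f_{yyy}^2)$, so the two are equivalent. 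Moreover $\Phi\lesssim D$ because the $e^{-s}$-weighted pieces of $\Phi$ carry a factor bounded by $1$. Hence
\[
\frac{d}{ds}\E_{\vartheta,\zeta,\omega}+2\mu_0\E_{\vartheta,\zeta,\omega}+c'D\le Ce^{-\mu s}
\]
with $2\mu_0<\mu$.

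Multiply by $e^{2\mu_0 s}$ and integrate on $[s_0,s]$. Keeping the $D$-integral, using $D\ge\|g\|^2_{L^{2,1}}+\|f_{yy}\|^2_{L^2}+\|g_y\|^2_{L^2}$, and using $e^{-2\mu_0(s-\tau)}\le e^{-\mu_0(s-\tau)}$, we get
\[
\Phi(s)+\int_{s_0}^s e^{-\mu_0(s-\tau)}D\,d\tau\le K_0\big(\Phi(s_0)+e^{-\mu s_0}\big)e^{-\mu_0(s-s_0)},
\]
which is \eqref{P66}. The main obstacle is the bookkeeping of the parameter hierarchy $\vartheta\gg\zeta\gg\omega$, then $s_0$ large, together with controlling the cross term $\int yf_{yy}g_y$. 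Its weight $y$ rules out absorption into unweighted norms, and this is exactly why the $(1+y^2)f_{yy}^2$ dissipation in Lemma \ref{L03} and the exponential smallness of $e^{-s}/a$ are needed.
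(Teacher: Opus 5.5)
Your argument is essentially the paper's proof. Both sum Lemmas \ref{L03}, \ref{L5}, \ref{L6}, \ref{L7}, absorb the remainders by taking $\kappa\delta_0,\vartheta,\zeta,\omega$ small and then $s_0$ large, and handle the cross term $-\frac{4\vartheta e^{-s}}{a}\int_{\R} y f_{yy}g_y\,\mathrm{d}y$ by Young's inequality and \eqref{v1}. Both then use $\E_{\vartheta,\zeta,\omega}\simeq\Phi$ and integrate. The paper places the small factor $e^{-s}/a$ on the $g_y$ side of Young's inequality rather than the $f_{yy}$ side. Your explicit ordering $\omega\ll\zeta$ makes precise a point the paper leaves implicit.

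There is one slip in the last step. The pointwise bound $e^{-2\mu_0(s-\tau)}\le e^{-\mu_0(s-\tau)}$ goes the wrong way for controlling $\int_{s_0}^s e^{-\mu_0(s-\tau)}D\,\mathrm{d}\tau$. Any of the following fixes it:
\begin{itemize}
\item use $e^{-\mu_0(s-\tau)}\le e^{\mu_0(s-s_0)}e^{-2\mu_0(s-\tau)}$ for $s_0\le\tau\le s$, together with the factor $e^{-2\mu_0(s-s_0)}$ on the right;
\item simply rename $2\mu_0$ as $\mu_0$, since the proposition only asserts that some rate exists;
\item use $2\mu_0\E\ge\mu_0\E\ge0$ and multiply by $e^{\mu_0 s}$, as the paper does.
\end{itemize}
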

\begin{proof}
Clearly 
$\E_{\vartheta, \zeta, \omega} \in{\mathcal C}^1([s_{0},s_{0}+S])$.
Moreover, by combining
Lemmas \ref{L03}, \ref{L5}, \ref{L6}, \ref{L7}, we obtain
\begin{align*}\label{EE0}
	\frac{d}{ds} \E_{\vartheta, \zeta, \omega  }(s)
	&\le
	- \frac{1}{2} \left( \frac{3}{4} - \theta \right) \int_{\R}(1+y^2)f^{2} \,{\mathrm{d}}y
	-\int_{\R}(1+y^2)f_{y}^{2} \,{\mathrm{d}}y
	-\frac{e^{-s}}{a}\int_{\R}(1+y^{2})f_{yy}^2 \,{\mathrm{d}}y \\
	&\quad
	-\frac{\vartheta}{2}\int_{\R} (1+y^{ 2})g^{2} \,{\mathrm{d}}y
	-\frac{\zeta e^{-s}}{a}\int_{\R}f_{yyy}^2 \,{\mathrm{d}}y
	-\frac{\zeta}2 \int_{\R} f_{yy}^2 \,{\mathrm{d}}y
	-\frac{\omega}{2}\int_{\R}g_y^{2} \,{\mathrm{d}}y\nonumber\\
	&\quad
	- \frac{4 \vartheta e^{-s}}{a} \int_{\R}yf_{yy} g_y \,\y
	+ \Lambda_{\rho_0}(s) + \Lambda_1^{(0,1)}
	+ \Lambda_1^{\langle 1\rangle, (0)}(s) + \Lambda_2^{\langle 1\rangle, (0)}(s).
\end{align*}
Here, the remainder terms satisfy
\begin{align*}
	&\Lambda_{\rho_0}(s) + \Lambda_1^{(0,1)}
	+ \Lambda_1^{\langle 1\rangle, (0)}(s) + \Lambda_2^{\langle 1\rangle, (0)}(s) 
	\\
	&\le
	C_{\ast\ast} \vartheta \frac{ e^{-s}}{a} \int_{\R}(1+y^2)f^2_{yy} \,\y
	\\
	&\quad
	+(C_{\ast} (\kappa\delta_{0})^{\overline {p}-1}
		+ C_{\ast\ast} \vartheta + C_{\sharp} \zeta +C_{\flat} \omega )
	\int_{\R} (1+y^2)f^2 \,\y
	\\
	&\quad 
	+ (C_{\ast} + C_{\sharp} \zeta )e^{-\mu s} \int_{\R}(1+y^2)g^2 \,{\mathrm{d}}y
	\\
	&\quad
	+ (C_{\ast\ast} \vartheta + C_{\sharp} \zeta + C_{\flat} \omega )
		\int_{\R}(1+y^{ 2})f^2_{y} \,{\mathrm{d}}y
	+ C_{\flat} \omega  \int_{\R}f^2_{yy} \,\y
	\\
	&\quad
	+ C_{\flat} \omega \frac{ e^{- s}}{a} \int_{\R}f^2_{yyy} \,\y
	+ (C_{\ast} + C_{\ast\ast} \vartheta + C_{\sharp} \zeta + C_{\flat} \omega)e^{-\mu s}.
\end{align*}
We apply the Schwarz inequality to obtain
\[
	-\frac{4 \vartheta e^{-s}}{a} \int_{\R}yf_{yy} g_y \,\y
	\le
	\frac{  e^{-s}}{4a}\displaystyle\int_{\R}y^2f^2_{yy} \,\y
	+ \frac{16 \vartheta^2 e^{-s}}{ a}\int_{\R} g_y^2 \,\y
\]
%
We now choose $\kappa \delta_0$, $\vartheta_0,$ $\omega_0$, and  $\zeta_0$ 
small enough,
and then choose $s_{0}$ large enough to deduce
\begin{align}\label{EE11v1}
	\frac{d}{ds} {\E}_{\vartheta_0,\zeta_0, \omega_0}(s) 
 	&\le
	-\frac{\lambda_0}2\int_{\R}(1+y^2)f^{2} \,{\mathrm{d}}y
	-\frac12\int_{\R}(1+y^2)f_{y}^{2}{\mathrm{d}}y
	\\
	&\quad
	-\frac{e^{-s}}{2a}\int_{\R}y^2f_{yy}^2 \,{\mathrm{d}}y
	-\frac{\vartheta_0}{4}\int_{\R} (1+y^{ 2})g^{2} \,{\mathrm{d}}y
	\\
	&\quad
	-\frac{\zeta_0}4\frac{ e^{-s}}{a}\int_{\R}f_{yyy}^2 \,{\mathrm{d}}y
	-\frac{\zeta_0}4\int_{\R} f_{yy}^2 \,{\mathrm{d}}y
	-\frac{\omega_0}{4}\int_{\R}g_y^{2} \,{\mathrm{d}}y
	+Ce^{-\mu s}
\end{align}
with some positive constants
$\lambda_0, \vartheta_0, \zeta_0, \omega_0$.
Additionally, we prove easily that there exists $C_{0}>1$ such that,
for all $s\in[s_{0},s_{0}+S]$, where $s_{0}$ is large enough,
\begin{equation}
\label{3.60}
\frac{1}{C_{0}} \Phi(s,f(s),g(s)) \le {\E}_{\vartheta_0,\zeta_0, \omega_0}(s) 
 \le
C_{0} \Phi(s,f(s),g(s)).
\end{equation}
We then conclude from  \eqref{EE11v1},
and (\ref{3.60}) that, there exist  $\mu_0<\mu$, small enough such that
\begin{equation}
\label{EE111}
\frac{d}{ds}  {\E}_{\vartheta_0,\zeta_0, \omega_0}(s)  +
\mu_0 {\E}_{\vartheta_0,\zeta_0, \omega_0}(s) + 
\mu_0\Big(\|g\|^2_{L^{2.1}(\R)}
+ \|f_{yy}\|_{L^2(\R)}^2+\|g_{y}\|_{L^2(\R)}^2\Big)
\le Ce^{-\mu s}.
\end{equation}
Integrating (\ref{EE111}) over $[s_0,s]$, we obtain,
for all $s\in [s_0,s_{0}+S]$,
\begin{eqnarray}
\label{EEE}
 {\E}_{\vartheta_0,\zeta_0, \omega_0}(s) +\mu_0
\int_{s_0}^s e^{-\mu_0(s-\tau)}\Big(\|g\|^2_{L^{2.1}(\R)}
+ \|f_{yy}\|_{L^2(\R)}^2+\|g_{y}\|_{L^2(\R)}^2\Big) d\tau
\\
\le
  \Big( {\E}_{\vartheta_0,\zeta_0, \omega_0}(s_0) +\frac{A_4}{\mu-\mu_0} e^{-\mu s_0}
\Big)\
e^{-\mu_{0}(s-s_{0})}.
\end{eqnarray}
Now (\ref{P66}) is a direct consequence of (\ref{3.60})
and (\ref{EEE}).  This concludes the proof of Proposition \ref{Prop11}.
\end{proof}
\subsection{Proof of Theorem \ref{T1} }
 We choose  $s_0>0$
large enough, and
$\delta_0$ small enough,
 if $(f_0,g_0)\in Z^{1}(\R)$ satisfies
$\|(f_0,g_0)\|_{Z^{1}(\R)} \le{\delta}_{0} $,
 then (\ref{scaling})
has a unique local solution
$(f,g)\in {  C}^0([s_0,S_{max}),Z^{1}(\R) )$
satisfying $ (f(s_{0}),g(s_{0}))=(f_{0},g_{0}) $.

To prove that this solution is global,
we argue by contradiction.
Assume that
there exists $\widetilde{S}>0$
such that
\begin{equation}
\label{A.mmm}
\|f(s)\|^2_{H^{1.1}(\R)}
+\|f_{yy}(s)\|^2_{L^2(\R)}
<\kappa^2 \delta^2_{0},\qquad \
\forall s \in [s_0,s_0+\widetilde{S}),
\end{equation}
and
\begin{equation}
\label{3.11m}
\|f(\widetilde{S})\|^2_{H^{1.1}(\R)}
+\|f_{yy}(\widetilde{S})\|^2_{L^2(\R)}
=\kappa^2 \delta_{0}^2.
\end{equation}
If   $s_0$ is large enough,  so that
$e^{-\mu s_{0}}\le
\delta_0^2$, we have by
(\ref{P66}) 
\begin{equation}
\label{P660}
 \Phi (s,f(s),g(s))  \le
K_0  \Big( \Phi (s_0,f(s_0),g(s_0)) + \delta_0^2
\Big), \qquad \forall s\in [s_0,s_0+\widetilde{S}].
\end{equation}

By using the definition of $\Phi (s,f(s),g(s)) $ given by \eqref{defphi} we infer 
$\Phi (s_0,f(s_0),g(s_0)) \le{\delta}_{0}^2 $. Therefore,   if $\kappa>\sqrt{8K_{0}}$, we get
\begin{equation}
\label{3.1s}
\|f(s)\|^2_{H^{1.1}(\R)}
+\|f_{yy}(s)\|^2_{L^2(\R)}
\le 2K_0 \delta_{0}^2\le \frac{{\kappa}^2
\delta_{0}^2}{4}, \qquad \forall s\in [s_0,s_0+\widetilde{S}],
\end{equation}
which  contradicts (\ref{3.11m}).
Thus, we have,
\begin{equation}
\|f(s)\|^2_{H^{1.1}(\R)}
+\|f_{yy}(s)\|^2_{L^2(\R)}
\le 
 {\kappa}^2 \delta^2_{0},\quad
\qquad \forall ~
t\in [s_0,s_{0}+S_{max}).
\end{equation}
By Proposition \ref{Prop11}, we conclude that
\begin{equation}\label{contra}
\Phi(s,f(s),g(s))
\le \frac{{\kappa}^2\delta^2_0}{4}
\le \frac{1}{4},\qquad \forall ~ s\in [s_0,s_{0}+S_{max}).
\end{equation}
Then, the solution can be continued
to $[s_0,\infty)$.  By Proposition \ref{Prop11},
the property (\ref{P123}) holds for any $s \in [s_0, \infty)$.  This concludes the proof of Theorem \ref{T1}.

\subsection{Proof of Theorem \ref{T2}}
In the  original variables $x$ and $t$, we first set
\begin{equation}\label{eq:utilde}
	\tilde u(t, x)=u(t,x)-\g(t, x),
\end{equation}
where $\g$ is defined by \eqref{def:gamma}.
The function $\tilde u$ is a solution of the following  equation:
\begin{equation}
\label{ori0}
	\partial_{tt}\tilde u +b(t) \partial_{t} \tilde u -a(t) \partial_{xx}\tilde u + \partial_{xxxx}\tilde u
		=-\left( |\tilde u+\g|^{p-1}(\tilde u+\g)-|\g|^{p-1}\g \right) + \mathcal{R} (t, x)
\end{equation}
for $t\in (t_0,\infty),  x\in \R$,
where
\begin{equation}\label{def:R}
	\mathcal{R}(t, x)
	= -\partial_{tt}\g -\big(b(t)-b_0(1+t )^{\beta} \big) \partial_{t} \g +\left( a(t)-(1+t )^{\alpha} \right)
		\partial_{xx}\g - \partial_{xxxx}\g.
\end{equation}
Moreover, the initial data at  $t=t_0$  is given by
\begin{equation}\label{ori0data}
	\tilde u_0=\tilde u(t_0, x)=u(t_0,x)-\g(t_0, x), \quad
	\tilde u_1=\tilde u_t(t_0, x)=u_t(t_0, x)-\g_t(t_0, x), \quad
	x\in\R.
\end{equation}
Assume that these initial data are sufficiently small in
$Z^{1}(\R)$,
that is,
$\| ( \tilde{u}_0, \tilde{u}_1 ) \|_{Z^{1}(\R)} \le \tilde{\delta}_{0}$
with $\delta_0$ determined in Theorem \ref{T1}.
Then, after the change of variables \eqref{scaling0},
the initial data $(f_0(y), g_0(y)) := (f(s_0,y), g(s_0,y))$
at the time
$s=s_{0} :=\log (1+R(t_0))$
are given by
\begin{align*}\label{1.16}
	f_0(y)
	&:=
	v(s_0,y) - \Omega(y) \\
	&=
	\frac{e^{\theta  s_{0}}}{A_0}u(R^{-1}(e^{s_0}-1), ye^{\frac{s_{0}}{2}})-\f(y)
\end{align*}
and
\begin{align*}
	g_0(y)
	&:=
	w(s_0,y) + \theta \Omega (y) + \frac{y}{2} \Omega'(y) \\ 
	&=\frac{e^{(\theta+1)s_0}}{A_0 r\left( R^{-1}(e^{s_0}-1) \right)}
		\partial_tu \left( R^{-1}(e^{s_0}-1), ye^{\frac{s_{0}}{2}} \right)
	+\theta  \f(y)+{\frac{y}{2}}\f'(y).
\end{align*}
From the assumption of Theorem \ref{T2},
we can apply Theorem \ref{T1} and obtain
\begin{align}
	\left\| u(t) - \Gamma(t) \right\|_{L^{\infty}}
	&\le
	C (R(t)+1)^{-\theta} 
	\left\| f ( \log (R(t)+1), y) \right\|_{L^{\infty}} \\
	&\le C (R(t)+1)^{-\theta} \left\| f ( \log (R(t)+1), y) \right\|_{H^{1}_y} \\
	&\le C (R(t)+1)^{-\theta-\mu_0/2}.
\end{align}
This completes the proof of Theorem \ref{T2}.

\appendix
\section{Auxiliary lemmas for energy estimates}

\subsection{Preliminary weighted energy identities}
Let  $l, m \in \R$, $n \in \mathbb{N}\cup \{0\}$,
and for functions $c_1, c_2, c_3$,
 we consider a system for two functions $f=f(s, y), g=g(s, y)$ given by
\begin{equation}\label{SG}
\left\{
	\begin{alignedat}{2}
	&g=f_{s}-\frac{y}{2}f_y-l  f,\\
	&c_1(s)\Big(g_s-\frac{y}2g_y-mg \Big)+c_2(s)g
	+g=-c_3(s)f_{yyyy}+f_{yy}+h
	\end{alignedat}
\right.
\end{equation}
in $(s, y) \in I \times \mathbb{R}$ with an interval $I \subset \mathbb{R}$,
where
$h$  is a given function belonging to ${\mathcal C}^0( I, H^{0,n}(\R))$.

We introduce the following
functionals:
\begin{equation}\label{energ0}
	\begin{alignedat}{2}
	E^{(n)}_{1}(s) &= \frac{1}{2} \int_{\R}y^{2n}
	\left( f_y^{2} +
	c_3(s)f_{yy}^{2} +c_1(s)g^{2} \right)  \y,\\
	E^{(n)}_{2}(s) &= \frac{1}{2} \int_{\R} y^{2n} \left( f^{2}
	+ c_1(s) fg \right) \y.
	\end{alignedat}
\end{equation}
\begin{lem}\label{LG}
Assume that
$c_1, c_3 \in \mathcal{C}^1(I)$, $c_2 \in \mathcal{C}(I)$, and 
$(f, g) \in \mathcal{C}(I, H^{2,n}(\R) \times H^{0,n}(\R))$
satisfy \eqref{SG} in the sense of distribution.
Then, we have
\begin{align*}
	\frac{d}{ds}E^{(n)}_{1}(s)
	&=
	- \int_{\R}y^{2n}g^2\y + \left( l-\frac{2n-1}4 \right) \int_{\R}y^{2n}f_y^2 \,\y
		+ \left( l-\frac{2n-3}4 \right) c_3(s) \int_{\R}y^{2n}f^2_{yy} \,\y \\
	&\quad
		+\left( m-\frac{2n+1}4 \right) c_1(s) \int_{\R}y^{2n}g^2 \,\y
		+\left( \frac{c'_1(s)}2-c_2(s) \right) \int_{\R}y^{2n}g^2 \,\y \\
	&\quad
		-2n\int_{\R} y^{2n-1} f_{y}g \,\y
		-4n c_3(s) \int_{\R}y^{2n-1}f_{yy}g_y \,\y \\
	&\quad
		-2n (2n-1)c_3(s) \int_{\R}y^{2n-2}f_{yy} g \,\y
		+\frac{c_3'(s)}2 \int_{\R}y^{2n}f_{yy}^2 \,\y
		+ \int_{\R}y^{2n} h g \,\y
\end{align*}
and
\begin{align*}
	\frac{d}{ds}E^{(n)}_{2}(s)
	&=
		- \int_{\R}y^{2n}f^2_{y} \,\y + \left( l-\frac{2n+1}4 \right) \int_{\R}y^{2n}f^2 \,\y
		+ \left( m+l-\frac{2n+1}2 \right) c_1(s) \int_{\R}y^{2n} fg \,\y\\
	&\quad - c_3(s) \int_{\R}y^{2n}f^2_{yy} \,\y + c_1(s) \int_{\R}y^{2n}g^2 \,\y
		-c_2(s) \int_{\R}y^{2n}fg \,\y \\
	&\quad -2n(2n-1)c_3(s)\int_{\R}y^{2n-2} f_{yy}f \,\y
		+2n (2n-1)c_3(s) \int_{\R}y^{2n-2}f^2_{y} \,\y\\
	&\quad +n(2n-1)\int_{\R} y^{2n-2}f^2 \,\y +c_1'(s) \int_{\R}y^{2n}fg \,\y  
		+\int_{\R} y^{2n} hf \, \y.
\end{align*}
\end{lem}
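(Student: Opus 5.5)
We obtain both identities by differentiating under the integral sign and substituting \eqref{SG}. Everything is then reduced to standard forms by integration by parts against the weight $y^{2n}$. We first argue formally for smooth, rapidly decaying $(f,g)$, and pass to general $(f,g)\in\mathcal C(I,H^{2,n}\times H^{0,n})$ at the end by a density/mollification argument. The tool used throughout is the identity
\[
\int_{\R} y^{2n}\, u\, \tfrac{y}{2}u_y\,\y=-\tfrac{2n+1}{4}\int_{\R} y^{2n}u^2\,\y,
\]
valid for $u\in\{f,f_y,f_{yy},g\}$. The same computation with different weights is what generates all the numerical coefficients $\frac{2n\pm1}{4}$, $\frac{2n\pm3}{4}$.

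For $E_2^{(n)}$, we write $\frac{d}{ds}E_2^{(n)}=\int y^{2n}ff_s+\frac{c_1'}{2}\int y^{2n}fg+\frac{c_1}{2}\int y^{2n}(f_sg+fg_s)$. We replace $f_s=g+\frac y2f_y+lf$. From the second equation of \eqref{SG} we have
\[
c_1 g_s=c_1\bigl(\tfrac y2 g_y+mg\bigr)-c_2g-g-c_3f_{yyyy}+f_{yy}+h.
\]
The $-g$ from this and the $+g$ from $f_s$ cancel in the combination $\int y^{2n}f(f_s+\ldots)$, so that no $\int y^{2n}fg$ term with coefficient of order one survives. The term $\int y^{2n}ff_{yy}$ is integrated by parts to $-\int y^{2n}f_y^2+n(2n-1)\int y^{2n-2}f^2$. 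The term $-c_3\int y^{2n}ff_{yyyy}$ is integrated by parts twice, producing $-c_3\int y^{2n}f_{yy}^2$ and the lower-weight terms $-2n(2n-1)c_3\int y^{2n-2}f_{yy}f+2n(2n-1)c_3\int y^{2n-2}f_y^2$. The pieces $c_1\int y^{2n}(\frac y2 f_y g+f\frac y2 g_y)$ combine to $-\frac{2n+1}{2}c_1\int y^{2n}fg$. This accounts for the coefficient $m+l-\frac{2n+1}{2}$. I would track carefully how the factor $\frac12$ in $E_2$ interacts with the $c_1g^2$ and $c_1'fg$ terms; that is where a factor-of-two bookkeeping error is likely.

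For $E_1^{(n)}$, the key step is to differentiate $\frac12\int y^{2n}f_y^2$ using $\partial_y$ of the first equation, $f_{ys}=g_y+\frac y2 f_{yy}+(l+\frac12)f_y$. The term $\int y^{2n}f_yg_y$ is integrated by parts to $-\int y^{2n}f_{yy}g-2n\int y^{2n-1}f_yg$. Similarly, $c_3\int y^{2n}f_{yy}f_{yys}$ contains $c_3\int y^{2n}f_{yy}g_{yy}$. Integrating by parts twice gives $c_3\int y^{2n}f_{yyyy}g$ plus the cross terms $-4nc_3\int y^{2n-1}f_{yy}g_y$ and $-2n(2n-1)c_3\int y^{2n-2}f_{yy}g$. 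Finally, $c_1\int y^{2n}gg_s$ is replaced via the second equation of \eqref{SG}. Its contributions $\int y^{2n}(f_{yy}-c_3f_{yyyy})g$ cancel exactly the two terms just produced; this cancellation is the structural heart of the energy identity. What remains is $-\int y^{2n}g^2$, $-c_2\int y^{2n}g^2$, $\int y^{2n}hg$, the transport terms (giving $m-\frac{2n+1}{4}$, $l-\frac{2n-1}{4}$, and $l-\frac{2n-3}{4}$ after using that $f_{yys}$ carries $l+1$), and the $c_1'/2$, $c_3'/2$ terms.

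The main obstacle is not algebraic but justification: $(f,g)$ is only in $H^{2,n}\times H^{0,n}$, so terms like $f_{yyyy}$, $g_y$ and $\int y^{2n}f_{yy}g_{yy}$ make no sense directly. I would regularize by mollifying in $y$ and truncating the weight (replacing $y^{2n}$ by $y^{2n}\chi(y/R)$). I would then verify that the identity holds for the regularized functionals with commutator errors that vanish as the mollification parameter tends to zero and $R\to\infty$. Finally, I would integrate in $s$ to obtain the integrated identity, which gives $E_j^{(n)}\in\mathcal C^1$ with the stated derivative, since the right-hand sides are continuous in $s$. The term $c_3\int y^{2n-1}f_{yy}g_y$ requires $g_y$ weighted, so in the applications one uses the extra regularity; I would state the identity for the class where all appearing integrals are finite and obtain the rest by density.
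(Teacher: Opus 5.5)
Your overall route is the paper's. First establish the identity for regular functions: the paper simply quotes Lemma 3.1 of \cite{our1}, while you redo the algebra. Then pass to $(f,g)\in\mathcal C(I,H^{2,n}\times H^{0,n})$ by mollifying in $y$ and controlling the commutator $[\rho_\varepsilon\ast,\frac y2\partial_y]$ via Friedrichs' lemma. Your $E_1^{(n)}$ computation is correct, including the cancellation of $\int y^{2n}(f_{yy}-c_3f_{yyyy})g$ and the coefficients $l-\frac{2n-1}{4}$, $l-\frac{2n-3}{4}$. Your $E_2^{(n)}$ computation, however, is internally inconsistent, and this is exactly the factor of two you were worried about.

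\textbf{The $E_2^{(n)}$ computation.} Your first line differentiates $\frac12\int y^{2n}(f^2+c_1fg)$ as in \eqref{energ0}. The only $fg$-term coming from $\frac{c_1}{2}\int y^{2n}fg_s$ is then $-\frac12\int y^{2n}fg$. It does \emph{not} cancel the $+\int y^{2n}fg$ produced by $\int y^{2n}ff_s$. From the next sentence on you silently switch to coefficient $c_1$, e.g.\ in $c_1\int y^{2n}(\frac y2f_yg+f\frac y2g_y)$.

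In fact the right-hand side in the statement is exactly $\frac{d}{ds}\int y^{2n}\bigl(\frac12f^2+c_1fg\bigr)\,dy$. With \eqref{energ0} taken literally, the derivative is that right-hand side minus $\frac12\frac{d}{ds}\int y^{2n}c_1fg\,dy$. This leaves an uncancelled $+\frac12\int y^{2n}fg$ and halves $-\int y^{2n}f_y^2$, $n(2n-1)\int y^{2n-2}f^2$, all $c_3$-terms and $\int y^{2n}hf$, so the claimed identity is false for that functional. The normalization with coefficient $c_1$ (not $\frac{c_1}{2}$) is also the one actually used for $E_2^{(0,1)}$ and $E_q$ in \eqref{3.2}. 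So run the argument with $E_2^{(n)}:=\int y^{2n}(\frac12f^2+c_1fg)$; then your cancellation, the coefficient $m+l-\frac{2n+1}{2}$, and the $c_3$-terms all come out as stated.

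\textbf{The justification step.} Here your plan is more careful than the paper's. The paper only mollifies and asserts $f_\varepsilon\in\mathcal C^1(I,H^{2,n})$ and $g_\varepsilon\in\mathcal C^1(I,H^{0,n})$. But the transport term $\frac y2\partial_y$ costs one power of the weight, and mollification does not restore it. Your cutoff $y^{2n}\chi(y/R)$ handles this. The extra terms it creates are supported in $\{R\le|y|\le2R\}$ with coefficients $O(y^{2n})$, so they vanish as $R\to\infty$ by dominated convergence. Letting $\varepsilon\to0$ at fixed $R$ needs only the unweighted Friedrichs lemma. Working with the $s$-integrated identity before taking limits is the right way to conclude $E_j^{(n)}\in\mathcal C^1$.

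Your remark on $-4nc_3\int y^{2n-1}f_{yy}g_y$ is also correct. For $n\ge1$ this term needs $g_y\in L^2$, which holds in $Z^1$, where Lemma \ref{L5} uses the identity with $n=1$. The convergence $g_\varepsilon\to g$ in $H^{0,n}$ invoked in the paper does not supply it, so stating the identity under that extra regularity is the appropriate fix.
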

\begin{proof}
When $f, g$ are smooth, the proof is given by Lemma 3.1 in \cite{our1}.
Hence, we give a justification under the regularity assumption
$(f, g) \in \mathcal{C}(I, H^{2,n}(\R) \times H^{0,n}(\R))$.
We note that from the equation \eqref{SG}, we have
$f \in \mathcal{C}^1(I, L^2(\mathbb{R}))$
and $g \in \mathcal{C}^1(I, H^{-2}(\mathbb{R}))$.

Let $\rho_{\varepsilon}$ be a usual mollifier, that is,
$\rho_{\varepsilon}(y) := \varepsilon^{-1} \rho (y / \varepsilon )$
with a small parameter
$\varepsilon > 0$
and a function
$\rho \in \mathcal{C}_0^{\infty}(\mathbb{R})$
satisfying
$\rho \ge 0$, $\int_{\mathbb{R}} \rho(y) \,dy = 1$, and $\mathrm{supp\,} \rho \in [-1, 1]$.
Taking the convolution in the $y$-variable, we define
$f_{\varepsilon} := \rho_{\varepsilon} \ast f$,
$g_{\varepsilon} := \rho_{\varepsilon} \ast g$
and
$h_{\varepsilon} := \rho_{\varepsilon} \ast h$.
Then, we have
\begin{align*}
	\left\| (1+|y|^n) \int_{\mathbb{R}} \rho_{\varepsilon} (y-z) f (z) \,dz \right\|_{L^2}
	&\le
	C \left(
	\left\| \int_{\mathbb{R}} (1+|y-z|^n) \rho_{\varepsilon} (y-z) f (z) \,dz \right\|_{L^2} \right. \\
	&\quad
	\left.
	+ \left\| \int_{\mathbb{R}} \rho_{\varepsilon} (y-z) (1+|z|^n) f (z) \,dz \right\|_{L^2}
	\right) \\
	&\le
	C \| (1+|y|^n) \rho_{\varepsilon} \|_{L^1}
		\| (1+|y|^n) f \|_{L^2} \\
	&\le
	C \| (1+|\varepsilon y|^n ) \rho \|_{L^1} \| (1+|y|^n) f \|_{L^2} \\
	&\le 
	C \| f \|_{H^{0,n}},
\end{align*}
where we note that the constant $C$ can be taken uniformly in $\varepsilon > 0$.
Therefore, we see that $f_{\varepsilon}$ has the same weighted integrability as $f$, and hence,
$f_{\varepsilon} \in \mathcal{C}(I, H^{4,n}(\mathbb{R}))$
and
$g_{\varepsilon} \in \mathcal{C}(I, H^{2,n}(\mathbb{R}))$.
Moreover,
$(f_{\varepsilon}, g_{\varepsilon})$ satisfy the equations
\begin{equation*}
\left\{
	\begin{alignedat}{2}
	&g_{\varepsilon}=f_{\varepsilon,s}-\frac{y}{2}f_{\varepsilon,y} - l f_{\varepsilon}
	- \left[ \rho_{\varepsilon} \ast, \frac{y}{2}\partial_y \right] f, \\
	&c_1(s)\Big(g_{\varepsilon,s} - \frac{y}2 g_{\varepsilon,y} - mg_{\varepsilon} \Big)+c_2(s)g_{\varepsilon}
		+ g_{\varepsilon} =-c_3(s)f_{\varepsilon, yyyy}+f_{\varepsilon, yy}+ h_{\varepsilon}
		+ \left[ \rho_{\varepsilon} \ast, \frac{y}{2}\partial_y \right] g,
	\end{alignedat}
\right.
\end{equation*}
where
$[\rho_{\varepsilon} \ast, \frac{y}{2}\partial_y] f$
denotes the commutator
\[
	\left[ \rho_{\varepsilon} \ast, \frac{y}{2}\partial_y \right] f
	= \rho_{\varepsilon} \ast \left( \frac{y}{2} \partial_y f \right)
		- \frac{y}{2} \partial_y \left( \rho_{\varepsilon} \ast f \right).
\]
Here, we remark that
$\left[ \rho_{\varepsilon} \ast, \frac{y}{2}\partial_y \right] f \in H^{2,n}(\mathbb{R})$
and
$\left[ \rho_{\varepsilon} \ast, \frac{y}{2}\partial_y \right] g \in H^{0,n}(\mathbb{R})$
can be proved in the same way as in the unweighted case
(see, e.g., Mizohata \cite[Chapter 6]{Mi}).
Thus, from the equations, we also have
$f_{\varepsilon} \in \mathcal{C}^1(I, H^{2,n}(\mathbb{R}))$
and
$g_{\varepsilon} \in \mathcal{C}^1(I, H^{0,n}(\mathbb{R}))$.
Since $(f_{\varepsilon}, g_{\varepsilon})$ have enough regularity,
we can apply the conclusion of Lemma 3.1 of \cite{our1}.
For $\frac{d}{ds} E_1^{(n)}(s)$, we obtain the same identity as in the statement of Lemma \ref{LG}
with additional terms
\begin{align*}
	\int_{\mathbb{R}} y^{2n} \left( \left[ \rho_{\varepsilon}\ast, \frac{y}{2} \partial_y \right] f \right)_y f_y \,\mathrm{d}y
	+ c_3(s) \int_{\mathbb{R}} y^{2n} \left( \left[ \rho_{\varepsilon}\ast, \frac{y}{2} \partial_y \right] f \right)_{yy} f_{yy} \,\mathrm{d}y
	\\
	+ c_1(s) \int_{\mathbb{R}} y^{2n} \left( \left[ \rho_{\varepsilon}\ast, \frac{y}{2} \partial_y \right] g \right) g \,\mathrm{d}y
\end{align*}
in the right-hand side.
Now, we let $\varepsilon \to 0$ in the identity of $\frac{d}{ds} E_1^{(n)}(s)$.
We note that
$f_{\varepsilon} \to f$ in $H^{2,n}(\mathbb{R})$,
$g_{\varepsilon} \to g$ in $H^{0,n}(\mathbb{R})$,
$h_{\varepsilon} \to h$ in $H^{0,n}(\mathbb{R})$
(they can be proved in completely the same way as the unweighted case, see e.g., Racke \cite[Lemma 4.2]{Ra}),
and 
$\left[ \rho_{\varepsilon} \ast, \frac{y}{2}\partial_y \right] f \to 0$
in $H^{2,n}(\mathbb{R})$,
$\left[ \rho_{\varepsilon} \ast, \frac{y}{2}\partial_y \right] g \to 0$ in $H^{0,n}(\mathbb{R})$
(see e.g., Mizohata \cite[Chapter 6]{Mi})
hold.
Consequently, letting $\varepsilon \to 0$, we obtain the desired identity for $\frac{d}{ds} E_1^{(n)}(s)$.
The identity for $\frac{d}{ds} E_2^{(n)}(s)$ can be obtained in the same way.
\end{proof}

\subsection{Asymptotic behavior of the self-similar solution $\Omega$}
%
%
In this subsection, we recall the existence of self-similar solution $\Omega$ for the equation \eqref{1.4}:
\[
	\Omega''(z) + \frac{z}{2} \Omega'(z) + \theta \Omega(z) - |\Omega(z)|^{p-1} \Omega(z) = 0, \quad z \in \mathbb{R},
\]
and prove the asymptotic behavior of its higher order derivatives.
\begin{lem}\label{L3bis1}
For any $c_0 > 0$, there exists a smooth, even, positive solution $\Omega$ for \eqref{1.4}
satisfying the following.
\begin{align}\label{1.5}
	\f (z) &= c_0{|z|}^{-2\theta } + c_1 |z|^{-2\theta-2 \varsigma} + \smallO{|z|^{-2\theta-2 \varsigma} }, \\
\label{1.5000}
	\theta \f(z) + \frac{z}2\f'(z) &= c_2|z|^{-2\theta-2\varsigma}
	+\smallO{ |z|^{-2\theta-2\varsigma}},
\end{align}
with some constants $c_1, c_2$, where
\begin{equation}\label{1.5bis}
	\varsigma= \min (1,\theta (p-1)).
\end{equation}
Moreover, with some constants $c_6, c_7, c_8, c_9$, we have
\begin{align}\label{28oc2bis}
	\f^{''}(z) &= c_6 |z|^{-2\theta-2} +\smallO{|z|^{-2\theta-2}}, \\
\label{28oc2bisbis}
	\f^{'''}(z) &= c_8|z|^{-2\theta-3} +\smallO{|z|^{-2\theta-3}}, \\
\label{28oc2L}
	\left( \theta+\frac12 \right)  \f'(z)+\frac{z}{2}\f^{''}(z) &= c_7|z|^{-2\theta-2\varsigma-1}
	+\smallO{ |z|^{-2\theta-2\varsigma-1}}, \\
\label{28oc2kk}
	\f^{''''}(z) &= c_9|z|^{-2\theta-4}
	+\smallO{|z|^{-2\theta-4}}.
\end{align}

\end{lem}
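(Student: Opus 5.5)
We take existence from \cite[Theorems 3 and 4]{BPT}. For every $c_0>0$ they give a smooth, even, positive solution of \eqref{1.4} with $\f'(0)=0$, $\f'<0$ on $(0,\infty)$, and $|z|^{2\theta}\f(z)\to c_0$. By evenness it suffices to work for $z\to+\infty$. The natural device is the quantity $\psi(z):=\theta\f(z)+\frac z2\f'(z)$. Writing \eqref{1.4} as $\f''+\psi=\f^p$ and differentiating gives the first-order structure
\[
	\psi'(z) = \Big(\theta+\tfrac12\Big)\f' + \tfrac z2\f'' .
\]
This is exactly the left-hand side of \eqref{28oc2L}. The plan is to first get the decay of $\psi$, which yields \eqref{1.5000} and, after integrating, \eqref{1.5}. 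The derivative estimates then follow by differentiating the equation.

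\textbf{Step 1 (rough decay of derivatives).} From $\f\sim c_0z^{-2\theta}$ we show $\f'=O(z^{-2\theta-1})$ and $\f''=O(z^{-2\theta-2})$. Set $F=z^{2\theta}\f$, so $F\to c_0$. The equation becomes $F''+(\frac z2-\frac{4\theta}{z})F'+(\text{terms }O(z^{-2}))F=z^{2\theta}\f^p$, and the right-hand side is $O(z^{-2\theta(p-1)})$. Multiply by the integrating factor $e^{z^2/4}z^{-4\theta}$ and integrate from a large $z_0$. Standard Laplace-type estimates of $\int^z e^{\tau^2/4}\tau^k\,d\tau\sim 2e^{z^2/4}z^{k-1}$ then give
\[
	F'(z)=O\big(z^{-1-2\varsigma}\big),
\]
because the forcing is $O(z^{-2}+z^{-2\theta(p-1)})=O(z^{-2\varsigma})$ with $\varsigma$ as in \eqref{1.5bis}. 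Integrating $F'$ from $z$ to $\infty$ gives $F=c_0+O(z^{-2\varsigma})$.

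\textbf{Step 2 (sharp expansions \eqref{1.5}, \eqref{1.5000}).} Note that $\psi=\frac12 z^{1-2\theta}F'$. To identify the leading coefficient we redo the integration in Step 1 keeping leading terms instead of just bounds. The forcing equals $c_0^p z^{-2\theta(p-1)}$ if $\theta(p-1)<1$, equals $-\theta(2\theta+1)c_0z^{-2}$-type terms if $\theta(p-1)>1$, and is the sum of both in the equality case. The Laplace asymptotics give $F'=2\,(\text{forcing})\,z^{-1}(1+o(1))$, which is $F'=\kappa z^{-1-2\varsigma}+o(z^{-1-2\varsigma})$ with an explicit $\kappa$. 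Integrating gives \eqref{1.5} with $c_1=-\kappa/(2\varsigma)$, and $\psi=\frac\kappa2 z^{-2\theta-2\varsigma}(1+o(1))$ gives \eqref{1.5000}. The main obstacle is here: bookkeeping the lower-order terms so that the remainder really is $o(z^{-2\varsigma})$ uniformly, and treating the three regimes of $\theta(p-1)$ relative to $1$. We carry $F$ as $c_0+O(z^{-2\varsigma})$ inside the forcing and iterate once; the error then improves to $O(z^{-4\varsigma})+O(z^{-2})$ in the forcing. This leaves only $o(\cdot)$ after integration.

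\textbf{Step 3 (higher derivatives).} From $\f'=\frac2z(\psi-\theta\f)$ we get $\f'=-2\theta c_0z^{-2\theta-1}+o(z^{-2\theta-1})$. The equation gives
\[
	\f''=\f^p-\psi=O\big(z^{-2\theta-2\varsigma}\big).
\]
For \eqref{28oc2bis} we instead differentiate $\f'$ via $\f''=\frac{2}{z}(\psi'-(\theta+\frac12)\f')$, which needs $\psi'$. Differentiating the equation gives $\psi'=p\f^{p-1}\f'-\f'''$. Alternatively, from $\psi=\frac12z^{1-2\theta}F'$, the first-order ODE for $F'$ obtained in Step 1 expresses $F''$ through $F'$ and the forcing. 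This yields $\psi'=O(z^{-2\theta-2\varsigma-1})$ with a leading coefficient, which is \eqref{28oc2L}. Then $\f''=\frac2z(\psi'-(\theta+\frac12)\f')\sim 2\theta(2\theta+1)c_0z^{-2\theta-2}$, giving \eqref{28oc2bis}. Repeated differentiation of \eqref{1.4}, namely $\f'''=-\frac12\f'-\frac z2\f''-\theta\f'+p\f^{p-1}\f'$ and similarly for $\f''''$, gives \eqref{28oc2bisbis} and \eqref{28oc2kk}. In each case the known expansions of the lower derivatives are substituted, using the identity $(\theta+\frac12)\f'+\frac z2\f''=\psi'$ to see the leading cancellation. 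This produces the stated powers with constants $c_8,c_9$.
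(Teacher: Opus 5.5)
Your Steps 1 and 2 are sound. They even prove \eqref{1.5} and \eqref{1.5000}, which the paper only quotes. Step 3, however, has a genuine gap. Each derivative estimate there is obtained by inserting leading-order asymptotics with $o(\cdot)$ remainders into an identity whose leading terms cancel. Only the remainder survives, and it is larger than the order you claim.

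First, with $R$ the forcing of Step 1, the ODE for $F'$ gives
\[
	\psi'=\tfrac12(1+2\theta)z^{-2\theta}F'+\tfrac14 z^{2-2\theta}\bigl(2z^{-1}R-F'\bigr),
\]
while Step 2 only yields $F'-2z^{-1}R=o(z^{-1-2\varsigma})$. So you obtain $\psi'=o(z^{1-2\theta-2\varsigma})$, two powers of $z$ short of \eqref{28oc2L}. For $\varsigma<1$ you do not even get the bound $\psi'=o(z^{-2\theta-1})$ on which your derivation of \eqref{28oc2bis} rests.

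Second, and more seriously, you derive \eqref{28oc2bisbis} from $\Omega'''=p\Omega^{p-1}\Omega'-\psi'$. When $\theta(p-1)<1$, both terms on the right are exactly of order $z^{-2\theta-2\varsigma-1}$, and their leading parts cancel (indeed $c_7=-2\theta p c_0^{p}$ in this regime). The target order $z^{-2\theta-3}$ lies below the available remainder by the factor $z^{2-2\varsigma}$, which is unbounded as $\varsigma\to 0$. So this cannot be closed by substituting the expansions you have.

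Third, for \eqref{28oc2kk} the relevant combination is $(\theta+1)\Omega''+\frac z2\Omega'''=\psi''$, for which you have no expansion at all. Substituting \eqref{28oc2bis} and \eqref{28oc2bisbis} yields only $o(z^{-2\theta-2})$, even when $\varsigma=1$. Moreover, for $\theta(p-1)<1$ the terms $p\Omega^{p-1}\Omega''$ and $p(p-1)\Omega^{p-2}(\Omega')^2$ are of order $z^{-2\theta-2-2\theta(p-1)}\gg z^{-2\theta-4}$.

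The repair is to stay with $F=z^{2\theta}\Omega$ and never return to identities at the level of $\Omega$. Differentiating the $F$-equation $k-1$ times gives
\[
	\bigl(F^{(k)}\bigr)'+\Bigl(\frac z2-\frac{4\theta}{z}\Bigr)F^{(k)}
	=R^{(k-1)}-(k-1)\Bigl(\frac12+\frac{4\theta}{z^2}\Bigr)F^{(k-1)}+\dots,
\]
where the right-hand side involves only $F,\dots,F^{(k-1)}$ and is $O(z^{-(k-1)-2\varsigma})$ by induction. Your integrating-factor bound then yields $F^{(k)}=O(z^{-k-2\varsigma})$ for $k\le 4$. For $k=2$ you also get an explicit leading coefficient, which gives \eqref{28oc2L} through $\psi'=\frac12(1-2\theta)z^{-2\theta}F'+\frac12 z^{1-2\theta}F''$. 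Leibniz' rule then gives $\Omega^{(k)}=c_0\,(z^{-2\theta})^{(k)}+O(z^{-2\theta-k-2\varsigma})$, which is \eqref{28oc2bis}, \eqref{28oc2bisbis} and \eqref{28oc2kk}.

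The paper sidesteps the cancellation differently. It keeps the nonlinear potential $p\Omega^{p-1}$ inside the coefficient of the linear ODE satisfied by $\Omega'$ (and $\Omega''$). It then uses the Riccati quotients $\Omega'/\Omega$, $\Omega''/\Omega'$, $\Omega'''/\Omega''$ together with L'H\^opital's rule to show, for example, $z^2\Omega'''=-G_1\Omega'$ with $G_1$ convergent. In that way the large nonlinear contributions never have to be cancelled by hand.
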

\begin{proof}
The existence of a smooth, even, positive solution $\Omega$ to \eqref{1.4}
satisfying \eqref{1.5} and \eqref{1.5000}
was already proved by
Brezis, Peletier, and Terman \cite{BPT}
and Escobedo, Kavian, and Matano \cite{EKM}.
Therefore, it suffices to show \eqref{28oc2bis}--\eqref{28oc2kk}.
Consider now, for all $z\ge0,$
\begin{equation}\label{F0}
F_0(z)=\frac{\Omega'(z)}{\Omega(z)}.
\end{equation}
Note that $F_0$ is well-defined and differentiable. Moreover,  by 
using \eqref{1.4}, we get
\begin{equation}\label{28oc10}
F_0'(z)+\frac{z}2F_0(z)=-\theta+\Omega(z)^{p-1}-F_0(z)^2 :=g_0(z).
\end{equation}
Multiplying \eqref{28oc10} by $e^{\frac{z^2}4}$, and integrating over $(0,z),$  
we obtain
\begin{equation}\label{g0}
	F_0(z)=e^{-\frac{z^2}4}\int_0^zg_0(t)e^{\frac{t^2}4} \,\mathrm{d}t.
\end{equation}
Let us introduce the function $G_0$ defined by: 
\begin{equation}\label{G000}
G_0(z)= z^2\Big(\frac{z}2F_0(z)+\theta-\Omega^{p-1}(z)\Big).
\end{equation}
Thanks to  \eqref{g0}, we write
\begin{equation}G_0(z)
=\frac{\int_0^zg_0(t)e^{\frac{t^2}4}dt+\frac{2\theta}{z}{e^{\frac{z^2}4}}-\frac{2}{z}{e^{\frac{z^2}4}}\Omega^{p-1}(z)}{\frac2{z^3}e^{\frac{z^2}4}}
.\end{equation}
By L'Hopital's rule, we get 
$$\lim_{z\to \infty}G_0(z)=\lim_{z\to \infty}\frac{g_0(z)-\frac{2\theta}{z^2}+\theta+\frac{2}{z^2}\Omega^{p-1}(z)-\Omega^{p-1}(z)
-\frac{2p-2}{z}\Omega^{p-2}(z)\Omega'(z)}{\frac1{z^2}-\frac2{z^4}}.$$
Remember the definition of $g_0$ given by \eqref{28oc10}, we observe that
$$g_0(z)+\theta-\Omega^{p-1}(z)=-F_0^2(z).$$ Therefore,
\begin{equation}\label{G0}
\lim_{z\to \infty}G_0(z)=-2\theta+\lim_{z\to \infty}(-z^2F^2_0(z)+2\Omega^{p-1}(z)
-(2p-2)z\Omega^{p-2}(z)\Omega'(z)).
\end{equation}
Employing   \eqref{1.5}, and
\eqref{1.5000},  we deduce 
  $$ \lim_{z\to \infty}zF_0(z)=-2\theta,\ \   \textrm{and} \ 
 \lim_{z\to \infty}\Omega^{p-1}(z)=\lim_{z\to \infty}
z\Omega^{p-2}(z)\Omega'(z)=0.$$
  Consequently, we obtain that
\begin{equation}\label{G0b}
\lim_{z\to \infty}G_0(z)=-2\theta-4\theta^2.
\end{equation}
By using the expression $G_0$ given  by 
\eqref{G000} and the fact that $\f$ satisfy \eqref{1.4}, 
we  infer 
\begin{equation}\label{omega2}
z^2\f^{''}(z)
=-G_0(z)\Omega(z).
\end{equation}
 Thus, by  \eqref{1.5},  \eqref{G0b},  and \eqref{omega2}, we have
for $|z|$ large enough,
\begin{equation}\label{3.29tt}
 \f^{''}(z)=(2\theta+4\theta^2)|z|^{-2\theta-2}
+\smallO{
|z|^{-2\theta-2}}.\quad\qquad
\end{equation}
This proves \eqref{28oc2bis}.

Let $F_1(z)=\frac{\Omega''(z)}{\Omega'(z)}$, for $z>0$.
  In view of \eqref{1.4}, it is easy to see that
\begin{equation}\label{28oc1}
	F_1'(z)+\frac{z}2F_1(z)
	=
	- \left(\theta+\frac12 \right) + p\Omega^{p-1}(z)-F_1^2(z) := g_1(z).
\end{equation}
Multiplying \eqref{28oc1} by $e^{\frac{z^2}4}$, and integrating over $(1,z),$  
we obtain
\begin{equation}\label{e1}
	F_1(z) =
	F_1(1)e^{-\frac{z^2}4}+e^{-\frac{z^2}4}\int_1^zg_1(t)e^{\frac{t^2}4} \,\mathrm{d}t.
\end{equation}
We now introduce the following function:
\begin{equation}\label{Gf1}
	G_1(z)
	= \left( \frac{z}2F_1(z)+\theta+\frac12-p\f^{p-1}(z) \right) z^{2}.
\end{equation}
Thanks to  L'Hopital's rule, and the identity \eqref{e1}, we conclude 
\begin{multline}
	\lim_{z\to \infty}G_1(z)
	=
	\lim_{z\to \infty} \frac12 z^{3} e^{-\frac{z^2}4}
	\left( \int_{z_0}^zg_1(t)e^{\frac{t^2}4} \,\mathrm{d}t
		+(2\theta +1)z^{-1}e^{\frac{z^2}4} - 2p\f^{p-1}(z) z^{-1}e^{\frac{z^2}4} \right)
	\\
	=\lim_{z\to \infty}
	\frac{g_1(z)e^{\frac{z^2}4}
	+(2\theta+1)(z^{-1}e^{\frac{z^2}4})'
	-2p(\f^{p-1}(z) z^{-1}e^{\frac{z^2}4})'}{(\frac2{z^{3}}e^{\frac{z^2}4})'}.
\end{multline}
Hence, by
recalling the definition of $g_1$ given by \eqref{28oc1}, we have
\begin{multline}
\lim_{z\to \infty}G_1(z)
=\lim_{z\to \infty}
\frac{-F^2_1(z)-\frac{2\theta+1}{z^2}
-2p(p-1)\f^{p-2}(z)\f'(z)
z^{-1}
+2p\f^{p-1}(z)
z^{-2}}{
-\frac6{z^4}+\frac1{z^2}}\\
=\lim_{z\to \infty}
\big(-z^2F^2_1(z)-(2\theta+1)-
2p(p-1)z\f^{p-2}(z)\f'(z)
+2p\f^{p-1}(z)\big).
\end{multline}
Employing   \eqref{1.5}, 
\eqref{1.5000},  and \eqref{28oc2bis} we deduce 
  $$ \lim_{z\to \infty}zF_1(z)=c_{10},\ \   \textrm{and} \ 
 \lim_{z\to \infty}\Omega^{p-1}(z)=\lim_{z\to \infty}
z\Omega^{p-2}(z)\Omega'(z)=0,$$
for some $c_{10}\in \R$.  Consequently, we obtain that
\begin{equation}\label{l1}
\lim_{z\to \infty}G_1(z)
=c_{11},
\end{equation}
for some $c_{11}\in \R$.
Also,  by differentiating  the identity   \eqref{1.4},  we infer that
\begin{equation}\label{defo}
	\f^{'''}(z) =
	-\left( \left( \theta+\frac12 \right) \f'(z)+\frac{z}{2}\f^{''}(z) \right)
	+ p\f^{p-1}(z)\f'(z).
\end{equation}
By using the expression $G_1$ given  by 
\eqref{Gf1} and the identity \eqref{defo}, 
we  infer 
\begin{equation}\label{omega2}
z^2\f^{'''}(z)
=-G_1(z)\Omega'(z).
\end{equation}
Thus, by \eqref{1.5},  \eqref{1.5000}, \eqref{l1}, and \eqref{omega2}, we have
for $|z|$ large enough,
\begin{equation}\label{3.29ttr}
 \f^{'''}(z)=c_{12}|z|^{-2\theta-3}
+\smallO{
|z|^{-2\theta-3}}.\quad\qquad
\end{equation}
Thus \eqref{28oc2bisbis} holds.

We are now ready to prove the estimate \eqref{28oc2L}. In fact, by \eqref{1.4} we obtain
\begin{equation}\label{n1}
(\theta+\frac12)  \f'(z)+\frac{z}{2}\f^{''}(z)=-
\f^{'''}(z)
+p\f^{p-1}(z)\f'(z).
\end{equation}

The result \eqref{28oc2L}  follows immediately from 
 \eqref{1.5},
 \eqref{1.5000},  and  \eqref{3.29ttr}, and the definition of $\upsilon$ given by \eqref{1.5bis}.

Now, we focus on the remaining estimate.
Let us first mention that  by  differentiating  the identity   \eqref{defo},  we infer that
\begin{equation}\label{dds}
\f^{''''}(z)=-\big((\theta+1)  \f''(z)+\frac{z}{2}\f^{'''}(z)\big)
+p|\f(z)|^{p-1}\f''(z) +p(p-1)\f(z)^{p-2}(\f'(z))^2.
\end{equation}
Since  $\f''$ satisfies  \eqref{28oc2bis}, it follows that  there  exists $z_0>0$, such that $ \Omega''(z)>0$, for all $z\ge z_0$.
Let 
\begin{equation}
F_2(z)=\frac{\Omega'''(z)}{\Omega''(z)},\quad  \forall z\ge z_0.
\end{equation}
We clearly have   
\begin{equation}\label{28oc1b}
F_2'(z)+\frac{z}2F_2(z)=-(\theta+1)+p\Omega^{p-1}(z) +p(p-1)\f(z)^{p-2}\frac{(\f'(z))^2}{\f''(z)}
-F_2^2(z):=g_2(z).
\end{equation}
Furthermore, by multiplying \eqref{28oc1b} by $e^{\frac{z^2}4}$, and integrating over $(z_0,z),$  
we obtain
\begin{equation}\label{reF2}
F_2(z)=F_2(z_0)e^{-\frac{z^2}4}+e^{-\frac{z^2}4}\int_{z_0}^zg_2(t)e^{\frac{t^2}4}dt.
\end{equation}
Finally, we introduce 
\begin{equation}\label{28oc1bg2}
G_2(z)= z^2\Big(\frac{z}2F_2(z)+\theta+1-p\Omega^{p-1}(z)
-p(p-1)\f^{p-2}(z)\frac{(\f'(z))^2}{\f''(z)}\Big).
\end{equation}
By exploiting \eqref{reF2},  we trivially write the following:
\begin{equation}\label{refF22}
	\lim_{z\to \infty}G_2(z)
	=
	\frac12\lim_{z\to \infty}
	\frac{\Big(\int_{z_0}^zg_2(t)e^{\frac{t^2}4} \,\mathrm{d}t+\frac2{z}e^{\frac{z^2}4}\big(\theta +1
	-p\Omega^{p-1}(z)
	-p(p-1)\f^{p-2}(z)\frac{(\f'(z))^2}{\f''(z)}\big) \Big)}{z^{-3}e^{\frac{z^2}4}}.
\end{equation}
By L'Hopital's rule and     
  the definition of $g_2$ given by \eqref{28oc1b}, we have
\begin{multline}
\lim_{z\to \infty}G_2(z)
=\lim_{z\to \infty}
\frac{-F_2^2(z)-2(\theta+1)\frac1{z^2}
+2p\Omega^{p-1}(z)\frac1{z^2}
-2p(p-1)\Omega^{p-2}(z)\f'(z)\frac1{z}
}{
-\frac{6}{z^4}+\frac1{
z^2}}\\
-p(p-1)\frac{(\frac1{z}\f^{p-2}(z)(\f'(z))^2\frac1{\f''(z)})'}{
-\frac{3}{z^4}+\frac1{2
z^2}}.
\end{multline}
Clearly, by using the above, we obtain
\begin{equation}\label{l1bis}
\lim_{z\to \infty}G_2(z)=-2\theta-2-\lim_{z\to \infty}
z^2F^2_2(z)=c_{13}.
\end{equation}
  By using the expression $G_2$ given  by 
\eqref{28oc1bg2} and the identity \eqref{dds}, 
we  infer 
\begin{equation}\label{omega2bis}
z^2\f^{''''}(z)
=-G_2(z)\Omega''(z).
\end{equation}
 Therefore, by  \eqref{28oc2bis}, \eqref{l1bis},  and \eqref{omega2bis}, we have
for $|z|$ large enough,
\begin{equation}\label{3.29ttrbb}
 \f^{''''}(z)=c_{12}|z|^{-2\theta-4}
+\smallO{
|z|^{-2\theta-4}}.\quad\qquad
\end{equation}
Thus  \eqref{28oc2kk} holds.
This concludes the proof of lemma \ref{L3bis1}.
\end{proof}

\subsection{Weight function}
As mentioned before, an appropriate choice for the weight function $q(y)$ can be, for example,
as follows
\begin{equation}\label{3.28}
	q(y)=
	\begin{dcases}
		e^{\frac{y^2}{4}} \int_{y}^{+\infty} e^{-\frac{r^2}{4}}
		\left( \f ^{-2}(r) - \f^{-2}(0) \right) \,{\mathrm{d}}r
		&(y \ge 0), \\
		e^{\frac{y^2}{4}} \int^{y}_{-\infty} e^{-\frac{r^2}{4}}
		\left( \f ^{-2}(r) - \f^{-2}(0) \right) \,{\mathrm{d}}r
		&( y \le 0 ).
	\end{dcases}
\end{equation}
%
%

We are now going to prove  the  above
listed properties of $q$.
\begin{lem}\label{L3}
The function $q$ defined by (\ref{3.28})
is  a positive even function of class
${\mathcal C}^2(\R)$ satisfying the following differential inequality:
\begin{equation}\label{3.25}
	q''(y) + \left( \frac{2 \f' }{\ \f }-\frac{y}{2} \right) q'(y)
	- \left( \frac{y \f' }{\ \f }+\frac{1}{2} \right) q(y)
	\le 0 \qquad  \text{in} \quad \R.
\end{equation}
Moreover, the function
$q$ satisfies (\ref{3.34}) and (\ref{3.35})
as well as the property
\begin{equation}
\label{3.26}
\qquad\qquad\qquad q(y)=C
|y|^{4\theta -1}+{\mathcal   O}
(|y|^{4\theta -3}),\qquad\qquad |y|\to +\infty.
\end{equation}
Furthermore,  $q$ 
 satisfying the bounds
\begin{equation}\label{3.34}
C^{-1}(1+|y|^{4\theta -1})
\le q(y) \le
C (1+|y|^{4\theta -1}), \quad
y\in \R,
\end{equation}
\begin{equation}
\label{3.35}
|yq'(y)| \le C (1+|y|^{4\theta -1}),
\quad y\in \R,
\end{equation}
\begin{equation}
\label{3.35bis}
|q''(y)| \le C (1+|y|^{4\theta -1}),
\quad y\in \R,
\end{equation}
 where $C>1$.
\end{lem}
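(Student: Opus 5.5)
The plan is to work on $y\ge 0$ and recover $y\le 0$ by symmetry. Write $\varphi(y):=\f^{-2}(y)-\f^{-2}(0)$. Since $\f$ is even, positive and nonincreasing on $(0,\infty)$, we have $\varphi\ge0$ there, with $\varphi>0$ for $y>0$ unless $\f$ is constant, which is impossible because $\f(z)\to0$. Hence $q>0$. The two branches of \eqref{3.28} are mirror images of each other, because the integrand is even, so $q$ is even.

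Differentiating \eqref{3.28} for $y>0$ gives the first-order identity
\[
q'(y)-\frac y2 q(y)=-\varphi(y),
\]
and the mirrored identity holds for $y<0$. Since $\varphi(0)=0$, both one-sided derivatives at $0$ vanish, so $q\in\mathcal C^1$. Differentiating once more,
\[
q''=\frac q2+\frac y2 q'+2\f'\f^{-3},
\]
whose right-hand side is continuous across $0$ because $\f'(0)=0$. Hence $q\in\mathcal C^2(\R)$.

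For the differential inequality \eqref{3.25}, the key observation is that the first-order identity collapses the operator. On $y>0$,
\[
q''-\frac y2q'-\frac12 q=\frac{d}{dy}\Big(q'-\frac y2 q\Big)=-\varphi'=2\f'\f^{-3}.
\]
Adding the remaining terms,
\[
q''+\Big(\frac{2\f'}{\f}-\frac y2\Big)q'-\Big(\frac{y\f'}{\f}+\frac12\Big)q
=\frac{2\f'}{\f}\Big(\f^{-2}+q'-\frac y2 q\Big)=\frac{2\f'(y)}{\f(y)}\,\f^{-2}(0)\le 0,
\]
because $\f'\le0$ on $(0,\infty)$. The left-hand side is even in $y$: $q$ is even, $q'$ and $\f'$ are odd, and $y\f'/\f$ is even. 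Therefore the inequality also holds on $y<0$. I will cite the monotonicity of $\f$ on $(0,\infty)$ from \cite{BPT}, as was already used in Lemma \ref{L03}.

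For the asymptotics I plan a Laplace-type integration by parts. We have
\[
\int_y^\infty e^{-r^2/4}\varphi\,dr=\frac{2\varphi(y)}{y}e^{-y^2/4}+\int_y^\infty e^{-r^2/4}\Big(\frac{2\varphi}{r}\Big)'dr .
\]
Repeating the step once more, and using $\varphi(r)=c_0^{-2}r^{4\theta}(1+o(1))$ from \eqref{1.5} together with $\varphi'=-2\f'\f^{-3}=\mathcal O(r^{4\theta-1})$ from \eqref{1.5000}, gives
\[
q(y)=\frac{2\varphi(y)}{y}+\mathcal O(y^{4\theta-3}),
\]
hence \eqref{3.26} with $C=2c_0^{-2}$, up to the lower-order corrections coming from \eqref{1.5}. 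Combined with positivity and continuity on compact sets, this yields \eqref{3.34}.

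The main obstacle is \eqref{3.35}. Computing $yq'=\frac{y^2}{2}q-y\varphi$ naively gives two terms of size $y^{4\theta+1}$, so the bound depends on their exact cancellation. The expansion above gives
\[
\frac{y^2}{2}q=y\varphi+\mathcal O(y^{4\theta-1}),
\]
so the leading terms cancel exactly and $yq'=\mathcal O(y^{4\theta-1})$. This requires controlling the second integration-by-parts remainder, which needs $\varphi''=\mathcal O(r^{4\theta-2})$; that follows from \eqref{28oc2bis} and \eqref{1.5000}. Finally, \eqref{3.35bis} follows from
\[
q''=\frac q2+\frac y2q'+2\f'\f^{-3}:
\]
each term is $\mathcal O(1+|y|^{4\theta-1})$ by \eqref{3.34}, \eqref{3.35}, and $\f'\f^{-3}\sim y^{-2\theta-1}y^{6\theta}$.
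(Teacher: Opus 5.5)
Your proposal is correct and follows the paper's proof. You use the same identity $q'=\frac{y}{2}q-\varphi$ on $y\ge 0$, and the same integration by parts writing $q$ as $2\varphi/y$ plus a remainder, so that the leading terms cancel exactly in $yq'=\frac{y^2}{2}q-y\varphi$. This is the paper's \eqref{3.32bbb}, where the first remainder is bounded directly, so your second integration by parts using $\f''$ is optional. You also use the same formula for $q''$ to get \eqref{3.35bis}, and your evaluation of the left side of \eqref{3.25} as $\frac{2\f'}{\f}\,\f^{-2}(0)\le 0$ just makes explicit what the paper calls easy to see.

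As in the paper's \eqref{3.33}, the expansion gives the error in \eqref{3.26} only as $\mathcal O(|y|^{4\theta-1-2\varsigma})$, not $\mathcal O(|y|^{4\theta-3})$, when $\varsigma<1$. This is harmless, since only \eqref{3.34}--\eqref{3.35bis} are used later.
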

\begin{proof}
Using \eqref{1.5},
we can write, for $|z|$ large enough,
\begin{equation}\label{3.29}
 \f^{-2}(z)=\frac{1}{c_0^2}
{|z|}^{4\theta }
+{\mathcal O}(|z|^{4\theta -2\upsilon}),
\end{equation}
 which implies that the function $q$,
given by   (\ref{3.28}), is
well defined.
Moreover, by using the fact that, we have
$y \f' (y)<0$, for all $y\in \R$,
it is easy to see that $q(y)$ is a solution
of the differential
 inequality (\ref{3.25}). Since
$\ \f $ is an even function,  $q$
 is an even function
of class ${\mathcal C}^2(\R)$.  Therefore, to  prove that  the estimates 
 \eqref{3.34}, \eqref{3.35},  
 \eqref{3.35bis} hold, 
we just need to obtain   similar  estimates  for $x$ large enough.

Now, let us show the proof in the case $x \geq 0$, because the proof in $x \leq 0$ follows in a similar manner.
Multiplying (\ref{3.29})
by $e^{-\frac{z^2}{4}}$, integrating over
$[x,\infty)$ and finally multiplying
by $e^{\frac{x^2}{4}}$ we have,
 for $x$ large enough,
\begin{align}\label{3.30}
	q(x)
	&=
	e^{\frac{x^2}{4}}\int_{x}^{+\infty} e^{\frac{-z^2}{4}}
		\left( \f ^{-2}(z)-\f^{-2}(0) \right) \,{\mathrm{d}} z
\nonumber \\
	&=
	\frac1{c_0^2}e^{\frac{x^2}{4}}\int_{x}^{+\infty} e^{\frac{-z^2}{4}}
	z^{4\theta } \,{\mathrm{d}}z
	+{\mathcal O} \left( e^{\frac{x^2}{4}} \int_{x}^{+\infty}
		e^{\frac{-z^2}{4}} z^{4\theta -2\upsilon}\,{\mathrm{d}}z \right).
\end{align}
Integrating by parts, we obtain, for $x$ large enough,
\begin{align}\label{3.32}
	e^{\frac{x^2}{4}} \int_{x}^{+\infty} ze^{\frac{-z^2}{4}} z^{4\theta -1} \,{\mathrm{d}}z
	&=
	2x^{4\theta -1}+{\mathcal O}
	\left( e^{\frac{x^2}{4}}
		\int_{x}^{+\infty} e^{\frac{-z^2}{4}} z^{4\theta -2\upsilon} \,{\mathrm{d}}z \right).
\end{align}
Hence, for $x$ large enough, we have
\begin{equation}\label{3.33}
	q(x) = \frac2{c_0^2}x^{4\theta -1}+{\mathcal O} \left( \frac{q(x)}{x^{2\upsilon}} \right)
	= \frac2{c_0^2}x^{4\theta -1}
	+{\mathcal O} \left( x^{4\theta -1-2\upsilon} \right).
\end{equation}
By exploiting the fact $q$ is even and the fact $q$ is positive over $\R$, we deduce that the estimate \eqref{3.34} holds. \\
Next,  by differentiating  the function $q$ given by \eqref{3.28}, we infer that
\begin{equation}\label{qdiff}
	q'(x)=\frac{x}2q(x)- (\f ^{-2}(x)-\f^{-2}(0)),
	\qquad\forall x \ge 0. 
\end{equation}
Furthermore, by writing 
$q(x)=-2e^{\frac{x^2}{4}}\int_{x}^{\infty}
-\frac{z}2e^{\frac{-z^2}{4}}
\frac{\ \f ^{-2}(z)-\f^{-2}(0)}{z}{\mathrm{d}}z$,
and integrating by parts, we obtain 
\begin{equation}\label{3.32bb}
	q(x) = \frac{2( \f ^{-2}(x)-\f^{-2}(0))}{x}-2e^{\frac{x^2}{4}}\int_{x}^{\infty} e^{\frac{-z^2}{4}}
	\frac{2z \f ^{-3}(z)\f'(z)+\f^{-2}(z)-\f^{-2}(0)}{z^2}{\mathrm{d}}z.
\end{equation}
Inserting \eqref{3.32bb} into
\eqref{qdiff}, yields
\begin{equation}\label{3.32bbb}
q'(x) = - x e^{\frac{x^2}{4}}\int_{x}^{\infty}
e^{\frac{-z^2}{4}}
\frac{2z \f ^{-3}(z)\f'(z)+\f^{-2}(z)-\f^{-2}(0)}{z^2}{\mathrm{d}}z.
\end{equation}
Hence, by \eqref{1.5}, and  \eqref{1.5000}, we write
for $x$ large enough
\begin{equation}\label{3.33bv}
q'(x)= {\mathcal O}(x^{4\theta -2}).
\end{equation}
Clearly \eqref{3.35} directly follows from \eqref{3.33bv}.
Also,  by differentiating  the identity   \eqref{qdiff},  we infer that
\begin{equation}
\label{qdiff1}
q''(x)=\frac{x}2q'(x)+\frac12 q(x)+2\f ^{-3}(x)\f'(x),
\qquad\forall x \ge 0. 
\end{equation}
Gathering all the above results, namely  \eqref{qdiff1}, \eqref{3.34}, \eqref{3.35},  \eqref{1.5}, and  \eqref{1.5000},  we end up with the
estimate \eqref{3.35bis}.
This concludes the proof of lemma \ref{L3}.
\end{proof}


\subsection{Estimates of remainder terms}
In this subsection, we collect the estimates of remainder terms appearing in the energy estimates
discussed in Section 2.

First, we prepare the following estimates for the functions involving
$a$ and $b$ that are frequently used in the energy estimates.
Noting that
\begin{align}
    \frac{d}{ds} r(t(s))
    = \frac{d}{ds} r(R^{-1}(e^s-1))
    = \frac{r'(t(s))}{r(t(s))} e^s,\label{rp}\\
    \frac{d}{ds} a(t(s))
    = \frac{d}{ds} a(R^{-1}(e^s-1))
    = \frac{a'(t(s))}{r(t(s))} e^s, \label{ap}
\end{align}
and using $a=br$, we have
\begin{align}\label{eq:c3prime}
	\frac{d}{ds} \left( \frac{e^{-s}}{a} \right)
	&= -\frac{a'}{ra^2}-\frac{e^{-s}}{a}
\end{align}
and
\begin{align}
\label{eq:c1prime}
    \frac{d}{ds} \left( \frac{r^2e^{-s}}{a} \right)
    &=
    \frac{1}{a^2}
    \left( 2 r \frac{dr}{ds} a e^{-s}
    -r^2 a e^{-s}
    -r^2 \frac{da}{ds} e^{-s}
    \right) \\
    &= \frac{2r'}{a} - \frac{r^2 e^{-s}}{a}- \frac{ra'}{a^2}
     = \frac{r'}{a} - \frac{r^2 e^{-s}}{a}- \frac{b'}{b^2}.
\end{align}
The following lemma gives the estimate for the function \eqref{eq:c1prime}.
\begin{lem}\label{lem:remainder}
Set
\begin{align}
    \mu &:=
    \min \left\{ \frac{\beta + 1}{\alpha-\beta + 1}, \frac{1}{\alpha-\beta + 1},
    \frac{2\alpha-\beta + 1}{\alpha-\beta+1} \right\},
\end{align}
which is positive if
$(\alpha,\beta) \in \mathcal{A}$.
Then, we have
\begin{align}
    \frac{r^2 e^{-s}}{a}
    \sim 
        e^{-\frac{\beta+1}{\alpha-\beta+1}s}
    \le
    C e^{-\mu s},
    \quad
    \frac{e^{-s}}{a}
    \sim
        e^{-\frac{2\alpha-\beta+1}{\alpha-\beta+1}s}
    \le
    C e^{-\mu s},\label{v1}
\end{align}
and
\begin{align} 
    \left| \frac{ra'}{a^2} \right|+
  \left| \frac{b'}{b^2} \right|+    \left| \frac{r'}{a} \right|
    \le C \frac{r^2e^{-s}}{a}.\label{v2}
\end{align}
\end{lem}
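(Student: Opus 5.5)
The plan is to turn every quantity into a power of $1+t$, using \eqref{ab} and \eqref{H1}, and then convert powers of $1+t$ into powers of $e^s$ through $e^s = 1+R(t)$.

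First I will establish the asymptotics of $r$ and $R$. Condition \eqref{ab} gives $a(t)\sim(1+t)^\alpha$ and $b(t)\sim b_0(1+t)^\beta$; these are uniform two-sided bounds for all $t\ge0$, since $a,b$ are positive and continuous and the asymptotic equivalence holds for large $t$. Hence $r=a/b\sim(1+t)^{\alpha-\beta}$.

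Because $\alpha-\beta+1>0$ on $\mathcal A$, integration gives $R(t)\sim(1+t)^{\alpha-\beta+1}$ for $t\ge1$, and so
\[
	e^s=1+R(t)\sim(1+t)^{\alpha-\beta+1},
\]
that is, $1+t\sim e^{s/(\alpha-\beta+1)}$. (Indeed $\alpha-\beta+1>0$ follows from $\beta<\alpha+1$.)

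Next come the two bounds in \eqref{v1}. Substituting,
\[
	\frac{r^2e^{-s}}{a}\sim(1+t)^{2\alpha-2\beta-\alpha-(\alpha-\beta+1)}=(1+t)^{-(\beta+1)}\sim e^{-\frac{\beta+1}{\alpha-\beta+1}s}.
\]
Similarly,
\[
	\frac{e^{-s}}{a}\sim(1+t)^{-(2\alpha-\beta+1)}\sim e^{-\frac{2\alpha-\beta+1}{\alpha-\beta+1}s}.
\]
Both exponents are at least $\mu$, which gives the bound $Ce^{-\mu s}$. The positivity of $\mu$ is immediate from Assumption (I), since $\beta>-1$ and $\beta<2\alpha+1$.

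For \eqref{v2}, I will use \eqref{H1} to get $|a'|\lesssim(1+t)^{\alpha-1}$ and $|b'|\lesssim(1+t)^{\beta-1}$. Then:
\begin{itemize}
	\item $|ra'/a^2|\lesssim(1+t)^{\alpha-\beta+\alpha-1-2\alpha}=(1+t)^{-\beta-1}$;
	\item $|b'/b^2|\lesssim(1+t)^{-\beta-1}$;
	\item $r'=(a'b-ab')/b^2$ gives $|r'|\lesssim(1+t)^{\alpha-\beta-1}$, hence $|r'/a|\lesssim(1+t)^{-\beta-1}$.
\end{itemize}
Each of these is comparable to $r^2e^{-s}/a$ by the computation above, which proves \eqref{v2}.

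The third exponent $1/(\alpha-\beta+1)$ in $\mu$ is not needed for these two estimates. It is recorded because $\varepsilon(t)=O((1+t)^{-1})=O(e^{-s/(\alpha-\beta+1)})$ is used elsewhere. The only mildly delicate step is the uniform two-sided comparison $1+R(t)\sim(1+t)^{\alpha-\beta+1}$ for all $t\ge0$, not only for large $t$. This follows because $R$ is continuous and positive for $t>0$ and $1+R\ge1$, so the ratio is bounded above and below on compact intervals.
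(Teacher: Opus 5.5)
Your proof is correct. The paper gives no argument of its own for this lemma and simply cites \cite{YW1}. Your computation supplies the routine verification that citation stands for: you rewrite every coefficient as a power of $1+t$ via \eqref{ab} and \eqref{H1}, and use the uniform comparison $e^{s}=1+R(t)\sim(1+t)^{\alpha-\beta+1}$, valid for all $t\ge 0$ because $\alpha-\beta+1>0$. Your side remark is also accurate: the exponent $\frac{1}{\alpha-\beta+1}$ in $\mu$ is needed only to absorb $\varepsilon(t)=\mathcal{O}((1+t)^{-1})$ in the bound \eqref{h02f}.
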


\begin{proof} See \cite{YW1}.
\end{proof}

Next, we give the estimates for the remainder term $h(s,y)$ defined in \eqref{1.17}.
\begin{lem}\label{lem:est:h}
Let $h(s,y)$ be defined in \eqref{1.17}.
Under \textbf{Assumptions (I), (II), (III)}, and \eqref{3.1}, we have
\[
	\| h \|_{H^{0,1}} \le C e^{-\mu s}.
\]
\end{lem}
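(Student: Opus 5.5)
We bound $h_1$ and $h_2$ separately in $H^{0,1}$, that is, in $L^2$ with weight $(1+y^2)$.

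\emph{The term $h_1$.} It is a sum of three time-dependent coefficients, $\frac{r^2e^{-s}}{a}$, $\frac{r'}{a}$ and $\frac{e^{-s}}{a}$, each multiplying a fixed profile built from $\Omega$. By \eqref{v1} and \eqref{v2} all three coefficients are $\mathcal{O}(e^{-\mu s})$. It therefore suffices to show that the profiles
\[
(\theta+1)\Bigl(\theta\Omega+\tfrac{y}{2}\Omega'\Bigr)+\tfrac{y}{2}\Bigl(\bigl(\theta+\tfrac12\bigr)\Omega'+\tfrac{y}{2}\Omega''\Bigr),\qquad \theta\Omega+\tfrac{y}{2}\Omega',\qquad \Omega''''
\]
lie in $L^{2,1}$. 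These are smooth and hence bounded on compact sets, so only the tails matter, and Lemma \ref{L3bis1} supplies them:
\begin{itemize}
\item by \eqref{1.5000}, $\theta\Omega+\frac{y}{2}\Omega'=\mathcal{O}(|y|^{-2\theta-2\varsigma})$;
\item by \eqref{28oc2L}, $\frac{y}{2}\bigl((\theta+\frac12)\Omega'+\frac{y}{2}\Omega''\bigr)=\mathcal{O}(|y|^{-2\theta-2\varsigma})$;
\item by \eqref{28oc2kk}, $\Omega''''=\mathcal{O}(|y|^{-2\theta-4})$.
\end{itemize}
A function that is $\mathcal{O}(|y|^{-\gamma})$ at infinity belongs to $L^{2,1}$ exactly when $2\gamma-2>1$, i.e.\ $\gamma>3/2$.

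\emph{The exponent condition.} The worst case is $\gamma=2\theta+2\varsigma$ with $\varsigma=\min(1,\theta(p-1))$, so we need $\theta+\varsigma>3/4$. I expect this to be the main obstacle, and the step where Assumption (III) is essential.
\begin{itemize}
\item If $\varsigma=1$, the inequality is trivial.
\item If $\theta(p-1)<1$, we need $\theta p>3/4$.
\end{itemize}
For the second case I will use $\theta(p-1)=\frac{1-\beta}{\alpha-\beta+1}$; write $\sigma$ for this quantity. Then $\theta p=\sigma p/(p-1)$, and $\theta p>3/4$ becomes $\sigma p>\frac34(p-1)$.
\begin{itemize}
\item If $\sigma\ge 3/4$, this holds automatically.
\item If $\sigma<3/4$, it is equivalent to $p<\frac{3}{3-4\sigma}=\frac{1}{1-4\sigma/3}$, which is the right-hand inequality in Assumption (III).
\end{itemize}
I must check that the case split $\theta(p-1)<\frac14$ in Assumption (III) really covers the regime where this is needed; presumably it does after accounting for the other assumptions. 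If the split turns out to be stated at $\frac14$ for a different reason, the intended requirement is still precisely $\theta+\varsigma>\frac34$, and I would verify that requirement directly. The $\Omega''''$ term decays faster, so it needs only $2\theta+4>3/2$, which always holds.

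\emph{The term $h_2$.} Here $h_2=\varepsilon(t)\,|f+\Omega|^{p-1}(f+\Omega)$. Since $\varepsilon(t)=\mathcal{O}((1+t)^{-1})$ and $e^{s}=1+R(t)\sim(1+t)^{\alpha-\beta+1}$, we get $|\varepsilon|\le Ce^{-s/(\alpha-\beta+1)}\le Ce^{-\mu s}$. Next,
\[
\bigl||f+\Omega|^{p-1}(f+\Omega)\bigr|\le C\bigl(|f|^p+\Omega^p\bigr).
\]
By Sobolev embedding and \eqref{3.1}, $\|f\|_{L^\infty}\le C\|f\|_{H^1}\le 1$, so $\|(1+|y|)|f|^p\|_{L^2}\le\|f\|_{L^\infty}^{p-1}\|f\|_{H^{0,1}}\le C$. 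Finally $\Omega^p=\mathcal{O}(|y|^{-2\theta p})$ lies in $L^{2,1}$ under the same condition $2\theta p>3/2$ used above; when $\varsigma=1$ it follows from $\theta p\ge\theta+1$. Combining the two parts gives $\|h\|_{H^{0,1}}\le Ce^{-\mu s}$.
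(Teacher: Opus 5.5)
Your route is the same as the paper's. The paper also bounds $(1+y^2)^{\theta+\varsigma}|h_1|$ in $L^\infty$ by $Ce^{-\mu s}$, using the tail expansions of Lemma \ref{L3bis1} together with \eqref{v1}--\eqref{v2}, and it reduces $h_1\in L^{2,1}$ to the condition $\theta+\varsigma>\frac34$. It handles $h_2$ as you do, via $|\varepsilon(t)|\le Ce^{-\mu s}$, the Sobolev bound on $\|f\|_{L^\infty}$, and $\Omega^p\in L^{2,1}$.

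The one step you left open is a real, if small, hole. You need $\theta p>\frac34$ when $\frac14\le\theta(p-1)<\frac34$. In that range \textbf{Assumption (III)} no longer imposes the upper bound on $p$, so your reduction to that bound does not apply there. The paper closes this with \textbf{Assumption (II)}. Since $\theta(p-1)=\sigma=\frac{1-\beta}{\alpha-\beta+1}$, the subcriticality condition $p-1<2\sigma$ is exactly $\theta>\frac12$. Hence, whenever $\theta(p-1)\ge\frac14$,
\[
\theta p=\theta(p-1)+\theta>\tfrac14+\tfrac12=\tfrac34 .
\]
Equivalently, $(1+2\sigma)(3-4\sigma)-3=2\sigma(1-4\sigma)\le0$ for $\sigma\ge\frac14$, so (II) already forces your condition $p<\frac{3}{3-4\sigma}$ there. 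For $\sigma<\frac14$ it does not, which is why (III) imposes it only in that case.

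With this observation added, your case analysis matches the paper's three cases: (III) for $\theta(p-1)<\frac14$, (II) for $\frac14\le\theta(p-1)\le1$, and $\varsigma=1$ with $\theta>\frac12$ otherwise. The same fact also gives $\Omega^p\in L^{2,1}$ in the $h_2$ estimate.
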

\begin{proof}
From \eqref{1.17}, we have $h = h_1 + h_2$.
Exploiting \eqref{1.5}, \eqref{1.5bis}, \eqref{1.5000}, \eqref{v1}, and \eqref{v2}, we infer
\begin{equation}\label{L200}
	\|(1+y^2)^{\theta +\upsilon}h_1\|_{L^{\infty}(\R)} \le C e^{- \mu s}.
\end{equation}
Now, we  should prove that
\begin{equation}\label{h01}
\| h_1 \|_{L^{2,1}(\R)} \le C e^{- \mu s}.
\end{equation}
We distinguish three cases:

\noindent
\textbf{(Case 1):}
First case ($\theta (p-1)\le \frac14$):
 Remembering the fact that, by \textbf{Assumption (III)}, we have
 $p <\frac1{1-\frac{4(1-\beta )}{3(\alpha-\beta+1)}},$   namely   
$\theta p>\frac3{4}$. By
using \eqref{L200}, we conclude estimate 
 \eqref{h01} holds in this case.

\noindent
\textbf{(Case 2):}
Second case ($\frac14<\theta (p-1)\le 1$):
Observing $\theta>\frac12$ from \textbf{Assumption (II)}, we have
\[
	\theta p=\theta (p-1)+\theta>\frac14+\frac12=\frac34.
\]
Then, the estimate  \eqref{h01} holds in this case.

\noindent
\textbf{(Case 3):}
Third case ($\theta (p-1)> 1$):
Observe that,  $\theta>\frac12$. Therefore, by the use of \eqref{L200}, the estimate 
\eqref{h01} holds in this case.

Next,   by using the expression of $h_2$ given by \eqref{1.17}, we conclude easily that
\begin{align}\label{h002}
	\|h_2\|_{L^{2,1}(\R)}
	&\le
		C |\varepsilon (t)| \big(\||f(s)|^p\|_{{L^{2,1}(\R)}}+\|\f^p\|_{ L^{2,1}(\R)}\big) \\
\nonumber
	&\le
		C |\varepsilon (t)| \big(  \|f(s)\|_{ L^{\infty}}^{p-1}\|f(s)\|_{{L^{2,1}(\R)}}+\|\f^p\|_{ L^{2,1}(\R)}\big).
\end{align}
By the Sobolev inequalities and the assumption (\ref{3.1}), we have
\begin{equation}\label{kjo1}
	\|f(s)\|_{ L^{\infty}(\R)}\le C \|f(s)\|_{H^{1,0}(\R)}\le C.
\end{equation}
Proceeding similarly as for the term $h_2$,  and using
\textbf{Assumption (III)}, we get 
\begin{equation}\label{h0021}
	\|\f^p\|_{ L^{2,1}(\R)} \le C.
\end{equation}
In view of \eqref{h002}, (\ref{3.1}), \eqref{kjo1}, \eqref{h0021}, we obtain
\begin{equation}\label{h02f}
	\|h_2\|_{L^{2,1}(\R)} \le C |\varepsilon (t)|\le C e^{- \mu s}
\end{equation}
Therefore, by using the fact that  $h=h_1+h_2$,  the estimates  \eqref{h01} and \eqref{h02f}, we infer
\begin{equation}\label{h}
	\|h\|_{L^{2,1}} \le C e^{- \mu s}.
\end{equation}
This completes the proof.
\end{proof}

In the next two lemmas, we estimate the nonlinear terms of (\ref{scaling}).
\begin{lem}\label{lem:est:k}
Let $k(s,y)$ be defined in \eqref{defk}.
Under \textbf{Assumptions (I), (II), (III)} and \eqref{3.1}, $k$ satisfies
\begin{equation}\label{h02fk}
	\|k\|_{L^{2,1}(\R)} \le C \| f\|_{L^{2,1}(\R)}.
\end{equation}
\end{lem}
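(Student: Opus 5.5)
We will prove \eqref{h02fk} by a pointwise bound on $k$, followed by integration against the weight $1+y^2$. Recall $k=-\big(|f+\f|^{p-1}(f+\f)-\f^p\big)$ with $\f>0$. The function $F(u)=|u|^{p-1}u$ is $C^1$ with $F'(u)=p|u|^{p-1}$. By the mean value theorem, for each $(s,y)$ there is $\tau=\tau(s,y)\in(0,1)$ such that
\[
	k(s,y)=-p\,|\f(y)+\tau f(s,y)|^{p-1} f(s,y).
\]
Hence
\[
	|k(s,y)|\le p\,\big(\f(y)+|f(s,y)|\big)^{p-1}|f(s,y)|
	\le p\,\big(\|\f\|_{L^\infty}+\|f(s)\|_{L^\infty}\big)^{p-1}|f(s,y)|.
\]

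Next we bound the two sup-norms. Since $\f$ is smooth, even, positive and decays like $c_0|y|^{-2\theta}$ by Lemma \ref{L3bis1}, we have $\|\f\|_{L^\infty}=\f(0)<\infty$. For $f$, the one-dimensional Sobolev embedding $H^1(\R)\hookrightarrow L^\infty(\R)$ combined with the bootstrap assumption \eqref{3.1} gives $\|f(s)\|_{L^\infty}\le C\|f(s)\|_{H^{1}}\le C\kappa\delta_0\le C$, exactly as in \eqref{kjo1}. Therefore $|k(s,y)|\le C|f(s,y)|$ pointwise, with $C$ depending only on $p$ and $\f(0)$. Multiplying the squared bound by $(1+y^2)$ and integrating over $\R$ yields $\|k\|_{L^{2,1}}\le C\|f\|_{L^{2,1}}$.

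The only point that needs care is the case $1<p<2$. There $F'(u)=p|u|^{p-1}$ is merely Hölder continuous, but this causes no trouble: we use only the boundedness of $F'$ on bounded sets, never its Lipschitz continuity. We also do not need the sharper splitting $k=-p\f^{p-1}f+\mathcal N(f)$ here, since only a linear bound is required; that refinement matters only for the smallness estimate of Lemma \ref{LN}. Thus there is no real obstacle, and Assumptions (I)--(III) enter only through \eqref{3.1} and the existence and boundedness of $\f$.
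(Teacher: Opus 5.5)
Your proof is correct and follows essentially the same route as the paper. Both arguments rest on the pointwise bound $|k|\le C\bigl(\|\Omega\|_{L^{\infty}}^{p-1}+\|f\|_{L^{\infty}}^{p-1}\bigr)|f|$, which the paper quotes as an elementary inequality and you derive from the mean value theorem; combining it with the Sobolev bound \eqref{kjo1} coming from \eqref{3.1} and the boundedness of $\Omega$ gives the estimate. One small imprecision does not affect the argument: only $\|\Omega\|_{L^{\infty}}<\infty$ is needed, whereas your identification $\|\Omega\|_{L^{\infty}}=\Omega(0)$ would additionally require that $\Omega$ be non-increasing on $(0,\infty)$.
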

\begin{proof}
Using the basic inequality 
\[
	\big||a+b|^{p-1}(a+b)-a^p\big| \le C \big( |a|^{p-1}+|b|^{p-1}\big) |b|,\quad
	\forall  a\ge 0, \quad b\in \R,
\]
we conclude easily that
\begin{equation*}\label{h002k}
	\|k\|_{L^{2,1}(\R)} \le C
	\big(  \|f(s)\|_{ L^{\infty}}^{p-1}+\|\f\|^{p-1}_{ L^{\infty}(\R)}\big)\|f(s)\|_{{L^{2,1}(\R)}}.
\end{equation*}
Gathering the above two estimates with \eqref{kjo1}, and (\ref{3.1}),
and the fact that $\f\in L^{\infty}(\R)$, we arrive at the conclusion.
\end{proof}


\begin{lem}\label{LN}
 Assume that $(f,g)\in {\mathcal  C}^0([s_{0},s_{0}+S],Z^{1}(\R))$
is a solution of (\ref{scaling}) satisfying
(\ref{3.1}).
Then, there exist
$\delta_{1}=(\kappa\delta_{0})^{\overline {p}-1}$
and $C_0>0$ such that 
\begin{equation}\label{3.37}
	\int_{\R}(1+y^{ 2})|f{\mathcal N}(f)| \,{\mathrm{d}}y
	\le C_0 \delta_{1}
	\int_{\R}(1+y^{ 2})f^2\,{\mathrm{d}}y,
\end{equation}
where $\overline{p}=\min (2,p)$.
\end{lem}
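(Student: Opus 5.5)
The plan is to prove a pointwise bound $|\mathcal N(f)| \le C |f|^{\bar p}$ with a constant independent of $y$, and then combine it with the uniform smallness of $f$ coming from \eqref{3.1}. Recall
\[
\mathcal N(f) = -\bigl( |f+\f|^{p-1}(f+\f) - \f^p - p\f^{p-1} f \bigr),
\]
where $\f>0$ is bounded, since it is smooth, even and decays like $|y|^{-2\theta}$ by Lemma \ref{L3bis1}. Put $F(u)=|u|^{p-1}u$, so that $\mathcal N(f) = -\bigl(F(\f+f)-F(\f)-F'(\f)f\bigr)$. I will treat the two cases $p\ge 2$ and $1<p<2$ separately.

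\textbf{Case $p\ge 2$.} Here $F\in C^2$ and $|F''(u)|\le C|u|^{p-2}$. Taylor's formula gives
\[
|\mathcal N(f)| \le C\bigl(\|\f\|_{L^\infty}+\|f\|_{L^\infty}\bigr)^{p-2} f^2 .
\]
By the Sobolev embedding \eqref{kjo1} and \eqref{3.1}, $\|f\|_{L^\infty}\le C\|f\|_{H^1}\le C\kappa\delta_0<C$. Hence $|f\,\mathcal N(f)|\le C|f|^3\le C\kappa\delta_0 f^2$, which gives the claim with $\bar p-1=1$.

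\textbf{Case $1<p<2$.} $F'$ is only Hölder continuous of order $p-1$, namely $|F'(a)-F'(b)|\le C|a-b|^{p-1}$. Writing
\[
\mathcal N(f) = -\int_0^1\bigl(F'(\f+\tau f)-F'(\f)\bigr)f\,d\tau,
\]
we get $|\mathcal N(f)|\le C|f|^p$, with a constant independent of how small $\f(y)$ is. This independence matters because $\f\to 0$ at infinity, so a bound of the type $\f^{p-2}f^2$ would blow up there. It follows that $|f\,\mathcal N(f)|\le C|f|^{p-1}f^2\le C\|f\|_{L^\infty}^{p-1}f^2\le C(\kappa\delta_0)^{p-1}f^2$.

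In both cases I then multiply by $(1+y^2)$ and integrate over $\R$, which yields \eqref{3.37} with $\delta_1=(\kappa\delta_0)^{\bar p-1}$.

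The only real subtlety is to keep the pointwise constant uniform in $y$ despite the decay of $\f$. The Hölder estimate for $F'$ in the case $p<2$ and the $L^\infty$ bound on $\f$ in the case $p\ge2$ take care of this.
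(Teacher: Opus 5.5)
Your proof is correct and follows the paper's argument. The paper also reduces \eqref{3.37} to the pointwise bound $|f\,\mathcal{N}(f)|\le C|f|^{\bar p+1}$, which it simply asserts from \eqref{1.17} and which you justify via the Taylor formula and, for $p<2$, the $(p-1)$-H\"older continuity of $F'$; it then uses $\|f\|_{L^\infty}\le C\kappa\delta_0$ from the Sobolev embedding and \eqref{3.1}. One minor point: at $p=2$ the function $F(u)=|u|u$ is only $C^{1,1}$, not $C^2$, but your Taylor estimate still holds because $F'$ is Lipschitz.
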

\begin{proof}
By using (\ref{1.17}), we obtain
\begin{equation}\label{3.38}
	\int_{\R}(1+y^{2})|f{\mathcal N}(f)|\,{\mathrm{d}}y\le
	C
 	\int_{\R}(1+y^{2})|f|^{\overline{p}+1}\,{\mathrm{d}}y.
\end{equation}
 On the other hand, by the Sobolev inequalities and (\ref{3.1})
we have, for all $s\in [s_{0},s_{0}+S]$
\begin{equation}\label{kjo}
	\|f(s)\|_{ L^{\infty}(\R)} \le C \|f(s)\|_{H^{1,0}(\R)} \le C \kappa\delta_{0}.
\end{equation}
The estimate (\ref{kjo}) together with (\ref{3.38}) implies that
\begin{equation}
\label{ee}
\int_{\R}(1+y^{ 2})|f{\mathcal N}(f)| \,{\mathrm{d}}y\le C
 \delta_{1}\int_{\R}(1+y^{ 2})f^2 \,{\mathrm{d}}y.
\end{equation}
This concludes the proof of  Lemma \ref{LN}.
\end{proof}

Finally, we prove the estimates for the derivatives $h_y$ and $k_y$.
\begin{lem}
Let $h$ and $k$ be defined by \eqref{1.17} and \eqref{defk}, respectively.
Then, we have
\begin{align}
\label{hbis}
	\left\| \frac{\partial h}{\partial y} \right\|_{L^2(\R)}
	&\le
	C e^{- \mu s},
	\qquad
	\left\| \frac{\partial k}{\partial y} \right\|_{L^2(\R)}
	\le C \| f \|_{H^{1,0}}.
\end{align}
\end{lem}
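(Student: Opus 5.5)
The plan is to differentiate $h = h_1 + h_2$ and $k$ in $y$ term by term. We then use the pointwise asymptotics of $\Omega$ and its derivatives from Lemma \ref{L3bis1}, the coefficient bounds \eqref{v1}--\eqref{v2}, the decay $\varepsilon(t) = \mathcal{O}(e^{-\mu s})$, and the a priori bound \eqref{3.1} through the Sobolev embedding \eqref{kjo1}.

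\emph{Estimate for $h_y$.} Differentiating $h_1$ produces terms of three kinds, each multiplied by $r^2e^{-s}/a$, $r'/a$ or $e^{-s}/a$, all $\le Ce^{-\mu s}$ by \eqref{v1}--\eqref{v2}:
\begin{itemize}
\item $(\theta\Omega + \tfrac{y}{2}\Omega')_y = (\theta+\tfrac12)\Omega' + \tfrac{y}{2}\Omega''$, which is $\mathcal{O}(|y|^{-2\theta-2\varsigma-1})$ by \eqref{28oc2L};
\item $\big(\tfrac{y}{2}((\theta+\tfrac12)\Omega' + \tfrac{y}{2}\Omega'')\big)_y$, which is handled via the identity \eqref{defo} rewritten as $(\theta+\tfrac12)\Omega' + \tfrac{y}{2}\Omega'' = -\Omega''' + p\Omega^{p-1}\Omega'$;
\item $\Omega'''''$.
\end{itemize}

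For the second kind, differentiating this identity gives terms controlled by $\Omega''''$, $\Omega'''$, $\Omega^{p-2}\Omega'^2$ and $\Omega^{p-1}\Omega''$. For the third kind, we differentiate \eqref{dds} once more, which expresses $\Omega'''''$ through $(\theta+\tfrac32)\Omega''' + \tfrac{y}{2}\Omega''''$ plus nonlinear terms in lower derivatives. The asymptotics \eqref{28oc2bis}--\eqref{28oc2kk} show that all of these decay at least like $|y|^{-2\theta-2}$, possibly times a bounded factor.

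This is the main obstacle: extracting enough decay of $\tfrac{y}{2}\Omega''''$ and of $\Omega'''''$. If a direct bound on $\Omega'''''$ is not available, I would repeat the $F_j / G_j$ argument from the proof of Lemma \ref{L3bis1} one more step to obtain $\Omega''''' = \mathcal{O}(|y|^{-2\theta-5})$. Since $\theta > \tfrac12 > 0$, every term is then bounded and decays at least like $|y|^{-2\theta-1}$, hence lies in $L^2(\R)$; smoothness of $\Omega$ handles bounded $y$. Only an unweighted $L^2$ bound is needed here, which is weaker than what Lemma \ref{lem:est:h} required, so no case split in $\theta(p-1)$ should be necessary.

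For $h_2$ we have
\[
\partial_y h_2 = p\,\varepsilon(t)\,|f+\Omega|^{p-1}(f_y + \Omega').
\]
Since $\|f\|_{L^\infty} \le C$ and $\Omega \in L^\infty$, this gives
\[
\|\partial_y h_2\|_{L^2} \le C|\varepsilon(t)|\big(\|f_y\|_{L^2} + \|\Omega'\|_{L^2}\big) \le Ce^{-\mu s},
\]
using \eqref{3.1} and $\Omega' = \mathcal{O}(|y|^{-2\theta-1})$.

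\emph{Estimate for $k_y$.} We compute
\[
k_y = -p\big(|f+\Omega|^{p-1}(f_y + \Omega') - \Omega^{p-1}\Omega'\big) = -p\,|f+\Omega|^{p-1}f_y - p\big(|f+\Omega|^{p-1} - \Omega^{p-1}\big)\Omega'.
\]
The first term is bounded in $L^2$ by $C\|f_y\|_{L^2}$ via \eqref{kjo1}.

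For the second term we split into two cases.
\begin{itemize}
\item If $p \ge 2$, the mean value theorem gives $\big||f+\Omega|^{p-1} - \Omega^{p-1}\big| \le C|f|$, so the term is bounded by $C\|\Omega'\|_{L^\infty}\|f\|_{L^2}$.
\item If $1 < p < 2$, we use $\big||a+b|^{p-1} - a^{p-1}\big| \le C|b|\,a^{p-2}$ when $|b| \le a/2$, and $\le C|b|^{p-1} \le C a^{p-2}|b|$ otherwise, since then $a \le 2|b|$. In either case the term is bounded by $C\,\Omega^{p-2}|\Omega'|\,|f|$. Because $\Omega^{p-2}|\Omega'| \sim |y|^{-2\theta(p-2) - 2\theta - 1} = |y|^{-2\theta(p-1)-1}$ is bounded, this gives $\le C\|f\|_{L^2}$.
\end{itemize}
Together these yield $\|k_y\|_{L^2} \le C\|f\|_{H^{1,0}}$.
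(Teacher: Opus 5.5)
Your proposal is correct and follows the paper's proof essentially step by step: the same splitting $h=h_1+h_2$, the same decomposition $k_y=-p|f+\Omega|^{p-1}f_y-p\Omega'\bigl(|f+\Omega|^{p-1}-\Omega^{p-1}\bigr)$, and the same case split $p\ge 2$ versus $1<p<2$ with the relative bound $C\Omega^{p-2}|\Omega'|\,|f|$ (the paper phrases this through $N_0(X)=|1+X|^{p-1}-1$). Your treatment of $\partial_y h_1$ is more explicit than the paper's, which never mentions $\Omega'''''$; differentiating \eqref{dds} once already gives $\Omega'''''=\mathcal{O}(|y|^{-2\theta-3})$, which suffices for the $L^2$ bound, so the fallback of repeating the $F_j/G_j$ argument is unnecessary.
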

\begin{proof}
Now, by
 recalling the definition  of
$h_1$ given by  \eqref{1.17}, and
 exploiting \eqref{1.5},
\eqref{1.5bis}, \eqref{1.5000},
\eqref{v1}, and \eqref{v2},
we infer
\begin{equation*}\label{L200prime}
	\left\| (1+y^2)^{\theta +\upsilon}\frac{\partial {h_1}}{\partial y} \right\|_{L^{\infty}(\R)}
	\le
	C e^{- \mu s}.
\end{equation*}
Therefore
\begin{equation}\label{h01primek}
	\left\| \frac{\partial h_1}{\partial y} \right\|_{L^2(\R)} \le C e^{- \mu s}.
\end{equation}
Next, by using the expression of $h_2$ given by \eqref{1.17}, we conclude easily that
\begin{align*}
	\left\| \frac{\partial h_2}{\partial y} \right\|_{L^2(\R)} 
	&\le
	C |\varepsilon (t)|
	\left( \|f(s)\|_{ L^{\infty}}^{p-1} +\|\f\|_{ L^{\infty}}^{p-1} \right)
	\left( \|f_y(s)\|_{{L^2(\R)}}+\|\f'\|_{ L^2(\R)} \right).
\end{align*}
By  \eqref{kjo1} and (\ref{3.1}) and the fact that
$\|\f\|_{ L^{\infty}(\R)}+ \|\f'\|_{ L^2(\R)}\le C$,
we have
\begin{equation}\label{h02fbis}
	\left\| \frac{\partial h_2}{\partial y} \right\|_{L^2(\R)} 
	\le C |\varepsilon (t)|\le C e^{- \mu s}.
\end{equation}
Therefore, by using the fact that  $h=h_1+h_2$,  the estimates  \eqref{h01primek} and \eqref{h02fbis},
we infer \eqref{hbis}.

Next, differentiating \eqref{defk} yields
\begin{equation}\label{kprime}
k_y=\underbrace{-p|f+\ \f |^{p-1}f_y}_{k_1}
\underbrace{-p\f'\big(|f+\ \f |^{p-1}
-\f^{p-1}\big)}_{k_2}.
\end{equation}
Adding the fact that
$\|\f\|_{ L^{\infty}(\R)}\le C$,
and \eqref{kjo1}, we write
\begin{equation}\label{k1}
\|k_1 \|_{L^2(\R)} 
 \le C 
 \big(  \|f\|_{ L^{\infty}}^{p-1}
 +\|\f\|_{ L^{\infty}}^{p-1}\big)
 \|f_y\|_{{L^2(\R)}}\le C \|f_y\|_{{L^2(\R)}}.
\end{equation}
To control the term $\|k_2\|_{L^2}$, namely to obtain
\begin{equation}\label{k1bb}
\|k_2 \|_{L^2(\R)} 
 \le C  \|f\|_{{L^2(\R)}},
\end{equation}
we distinguish two cases:

\noindent
\textbf{(Case 1):} $p\ge 2$

By using the fundamental theorem of calculus  we  obtain
\begin{equation}\label{k1b}
\|k_2 \|_{L^2(\R)} 
 \le C 
 \big(  \|f\|_{ L^{\infty}}^{p-2}
 +\|\f\|_{ L^{\infty}}^{p-2}\big)
 \|f\|_{{L^2(\R)}}.
\end{equation}
Proceeding similarly as for the term $k_1$, namely, 
by combining    the fact that  $\|\f\|_{ L^{\infty}(\R)}\le C$,  and  \eqref{kjo1},  we deduce  \eqref{k1bb} holds.

\noindent
\textbf{(Case 2)}: $p< 2$

First,  Since $ \f (y) >0$, for all $y\in \R$,  we  write
\begin{equation}\label{k2}
	k_2
	=
	-p \f' \f^{p-1} \left( \left| 1+\frac{f}{\f} \right|^{p-1}-1\right)
	=
	-p\f' \f^{p-1} N_0 \left( \frac{f}{\f} \right),
\end{equation}
where $N_0:\R\to \R$ is 
 given by $N_0(X)=|1+X|^{p-1}-1$. Clearly  $N_0(X)= {\mathcal O}(|X|)$ as $X\to 0, $
 and  $N_0(X)= {\mathcal O}(|X|^{p-1})$ as $|X|\to \infty.$ Therefore,
\begin{equation}\label{N0}
	|N_0(X)|
	\le 
	\begin{dcases}
	C|X| 
	&(|X| \le 1),\\
	C|X|^{p-1}
	&(|X| \ge 1).
	\end{dcases}
\end{equation}
Estimate \eqref{N0}, and the fact $p<2,$   yield
\begin{equation}
\label{N00}
|N_0(X)|\le C|X|, 
\qquad\forall X\in \R.
\end{equation}
Hence, combining \eqref{N00} and \eqref{k2}, we infer
\begin{equation}\label{k2b}
\|k_2 \|_{L^2(\R)} 
 \le C   \|\f' \f^{p-2}f\|_{{L^2(\R)}}\le  C
  \|\f' \f^{-1}\|_{{L^{\infty}(\R)}}
   \| \f\|^{p-1}_{{L^{\infty}(\R)}}
    \|f\|_{{L^2(\R)}}.
\end{equation}
The result  \eqref{k1bb} follows immediately from  \eqref{1.5} and \eqref{1.5000},  and \eqref{k2b}.
Hence, the estimate \eqref{k1bb}  holds true, for the two cases $p\ge2$ and $p< 2.$
\end{proof}

\section{Local Existence of Solutions}
In this section, we prove the local existence of the solution to
the equation \eqref{ori0} for the perturbation $\tilde{u}$
with the initial condition \eqref{ori0data}
in the weighted space $Z^1(\mathbb{R})$.
Let $t_0 > 0$.
We define $U={}^t( \tilde{u}, \partial_t \tilde{u})$ and we reformulate  \eqref{ori0} as 
an abstract Cauchy problem in the Hilbert space
$\dot{\mathcal{H}}^2(\mathbb{R})\times L^2(\mathbb{R})$,
where $\dot{\mathcal{H}}^2(\mathbb{R})$ denotes the complesion of $C_0^{\infty}(\mathbb{R})$
with respect to the norm $\| u_{xx} \|_{L^2}$:
\begin{equation}\label{syst}
\frac{d}{dt}U(t)=AU(t)+ B(t,U(t)),
\quad
U(t_0) = U_0 = {}^t (\tilde{u}_0, \tilde{u}_1),
\end{equation}
where
\[
	A=\begin{pmatrix} 0 & I \\ -\partial_{xxxx} &  0 \end{pmatrix}, \quad
	B(t, U(t))= \begin{pmatrix} 0  \\ a(t)\partial_{xx} \tilde{u}-b(t) \partial_{t} \tilde{u}
			+ \mathcal{G}(\tilde{u}) + \mathcal{R} \end{pmatrix}
\]
with the domain
$D(A) = \{ (u, v) \in \dot{\mathcal{H}}^2(\mathbb{R}) \times L^2(\mathbb{R}) ;\,
v \in \dot{\mathcal{H}}^2(\mathbb{R}), u_{xxxx} \in L^2(\mathbb{R}) \}$,
where
\[
	\mathcal{G}(\tilde{u}) := - \left( |\tilde{u}+\Gamma|^{p-1} (\tilde{u}+\Gamma) - |\Gamma|^{p-1}\Gamma \right)
\]
and $\mathcal{R}$ is defined by \eqref{def:R}.
By a standard argument, we can see that
$A$ is skew-adjoint and generates a unitary group
$\mathcal{T}(t)=\exp ((t-t_0)A)$
on $\dot{\mathcal{H}}^2(\mathbb{R})\times L^2(\mathbb{R})$
(see e.g. \cite{CaHa} and \cite{Pa}).
This and a simple $L^2$-estimate for $t > t_0$
\[
	\| u(t) \|_{L^2} \le
	\| \tilde{u}_0 \|_{L^2} + \int_{t_0}^t \| v(\tau) \|_{L^2} \, \mathrm{d}\tau
	 \le C(1+(t-t_0)) \left( \| \tilde{u}_0 \|_{H^2} + \| \tilde{u}_1 \|_{L^2} \right)
\]
imply that
$\mathcal{T}(t)$ restricted on $C_0^{\infty}(\mathbb{R}) \times C_0^{\infty}(\mathbb{R})$
is extended to a continuous semigroup on $H^2(\mathbb{R}) \times L^2(\mathbb{R})$
with the estimate
$\| \mathcal{T}(t) U_0 \|_{H^2 \times L^2} \le C(1+(t-t_0)) \| U_0 \|_{L^2}$
for $t > t_0$.
Now, we introduce the definition of the mild solution.
\begin{Def}\label{def:sol}
Let $I = [t_0,T]$ with some $T>t_0$
or $I = [t_0,\infty)$.
We say that
a function $u$ 
is a mild solution to \eqref{ori0} on $I$,
if  $U= {}^t (\tilde{u}, \partial_t \tilde{u})$ satisfies
$U \in {\mathcal C}(I, H^2(\mathbb{R})\times L^2(\mathbb{R}))$ and
\begin{equation}\label{eq:sol}
    U(t) = \mathcal{T}(t) U_0 + \int_{t_0}^t \mathcal{T}(t-s) B(s; U(s)) \,\mathrm{d}s
    \quad \text{in} \ {\mathcal {C}}(I, H^2(\mathbb{R})\times L^2(\mathbb{R})).
\end{equation}
\end{Def}
 We start from the following local existence result in the original variables
\begin{lem}\label{local1}
Suppose \textbf{Assumptions (I), (II), (III)}.
\begin{itemize}
\item[(i)] \textup{(Existence, uniqueness and continuous dependence)}
For any  $U_0 = {}^t(\tilde{u}_{0}, \tilde{u}_{1}) \in  H^3(\R) \times H^1(\R) $,
there exist $T > 0$ and
a unique mild solution $\tilde{u}$ to the equation \eqref{ori0}
such that
$U= {}^t (\tilde{u}, \partial_t \tilde{u}) \in {\mathcal C}(I, H^3(\R) \times H^1(\R))$,
where $I = [t_0, T]$.
The solution $U$
depends continuously on the initial data in 
$\mathcal{C}( I, H^2(\R)\times L^2(\R))$.
\item[(ii)] \textup{(Blow-up alternative)}
Let $T_{\max}$ be the lifespan of the local solution defined by
\[
	T_{\max} := \sup \{ T \in (t_0, \infty) ;\,
	\exists !  U \in \mathcal{C}([t_0, T], H^3(\R)\times H^1(\R)) \text{ \textup{:} mild solution to \eqref{ori0}} \}.
\]
If $T_{\max} < + \infty$, then we have
\[
	\lim_{t \to T_{\max}-0} \| U (t) \|_{H^3 \times H^1} = + \infty. 
\]
\item[(iii)] \textup{(Boundedness of weighted norm)}
If $U_0=(\tilde{u}_0, \tilde{u}_1) \in Z^1(\R)$
and $U \in \mathcal{C}(I, H^3(\R) \times H^1(\R))$
is the corresponding mild solution to \eqref{ori0},
then, $U \in \mathcal{C} (I, Z^1(\R))$.
\end{itemize}
\end{lem}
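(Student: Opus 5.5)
The plan is a standard contraction argument for the Duhamel formula \eqref{eq:sol}, followed by a separate weighted estimate for part (iii). For (i), I will work in the complete metric space
\[
X_T=\{U\in\mathcal{C}([t_0,T],H^3\times H^1):\ \sup_{t\in[t_0,T]}\|U(t)\|_{H^3\times H^1}\le 2M\},
\]
with $M$ a multiple of $\|U_0\|_{H^3\times H^1}$. The contraction will be measured in the weaker distance of $\mathcal{C}([t_0,T],H^2\times L^2)$, which makes $X_T$ complete by weak lower semicontinuity.

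First I check the linear part. The group $\mathcal{T}$ commutes with $\partial_x$, so the bound $\|\mathcal{T}(t)U_0\|_{H^2\times L^2}\le C(1+(t-t_0))\|U_0\|$ carries over to $H^3\times H^1$.

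Next I estimate $B$:
\begin{itemize}
\item The term $a(t)\partial_{xx}\tilde u-b(t)\partial_t\tilde u$ is linear. It maps $H^3\times H^1$ into $H^1$, with constants bounded on $[t_0,T]$ because $a$ and $b$ are smooth.
\item The term $\mathcal{G}(\tilde u)$ satisfies
\[
\|\mathcal{G}(\tilde u)\|_{H^1}\le C(\|\tilde u\|_{L^\infty}+\|\Gamma\|_{L^\infty})^{p-1}\|\tilde u\|_{H^1}.
\]
This follows from the pointwise inequality already used in Lemma \ref{lem:est:k}, from $\Gamma,\partial_x\Gamma\in L^\infty$, and from the splitting of $\partial_x\mathcal{G}$ into $k_1,k_2$-type terms as in \eqref{kprime}. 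For $p<2$ the $k_2$ term is handled as in \eqref{k2b}, using $\Omega'/\Omega\in L^\infty$.
\item $\mathcal{G}$ is locally Lipschitz from $H^1$ to $L^2$. This gives contraction in the weak norm $\mathcal{C}(I,H^2\times L^2)$.
\item The forcing $\mathcal{R}$ lies in $\mathcal{C}([t_0,T],H^1)$. This follows from Lemma \ref{L3bis1}: $\partial_t^2\Gamma$, $\partial_x^2\Gamma$, $\partial_x^4\Gamma$ and their $x$-derivatives decay like $|x|^{-2\theta-2}$ or faster, hence are square integrable since $\theta>0$.
\end{itemize}

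With these bounds, a small $T$ gives a contraction. Uniqueness and continuous dependence in $\mathcal{C}(I,H^2\times L^2)$ follow from Gronwall's inequality applied to the difference of two Duhamel formulas. For (ii), the existence time depends only on $\|U(t)\|_{H^3\times H^1}$, because the estimates above are uniform on bounded sets and for $t$ in compact intervals. If the norm stayed bounded along a sequence $t\to T_{\max}$, I could restart the solution and extend it past $T_{\max}$, which is a contradiction.

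The main obstacle is (iii), propagating the weight $(1+|x|^2)$. I would regularize the data, and also truncate the weight by $\langle x\rangle_\varepsilon^2=1+x^2/(1+\varepsilon x^2)$, which is bounded and has bounded derivatives uniformly in $\varepsilon$. I would then derive energy identities in the original variables for
\[
\int\langle x\rangle_\varepsilon^2\big(\tilde u^2+\tilde u_x^2+\tilde u_{xx}^2+\tilde u_t^2\big)\,dx
\]
and the analogous quantity with $\langle x\rangle_\varepsilon^{0}$ on the third-order terms $\tilde u_{xxx}^2,\ \tilde u_{tx}^2$. Integration by parts makes the weight derivatives appear only as lower-order terms, such as $\int \partial_x(\langle x\rangle_\varepsilon^2)\,\tilde u_{xxx}\tilde u_t$. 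Each of these is bounded by the same weighted energy plus the unweighted $H^3\times H^1$ norm, which is already controlled by (i).

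For the sources, $\mathcal{R}\in L^{2,1}$ requires $\Gamma$-derivatives of decay $|x|^{-2\theta-2}$ to satisfy $x^{-2\theta-1}\in L^2$ at infinity, which holds since $\theta>0$. The nonlinear term obeys the bound of \eqref{h02fk}. Gronwall's inequality on $[t_0,T]$ then gives a bound uniform in $\varepsilon$, and letting $\varepsilon\to0$ together with the regularization gives $U\in L^\infty(I,Z^1)$. Continuity in time follows by weak continuity combined with continuity of the norm, which comes from the energy identity. To justify the energy identities I would reuse the mollifier and commutator argument from the proof of Lemma \ref{LG}.
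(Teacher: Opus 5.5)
Your overall architecture is the paper's. For (i)--(ii) you run a contraction on an $H^3\times H^1$-ball with the $H^2\times L^2$ distance. For (iii) you use an energy identity with a truncated weight, closed by Gronwall with constants uniform in the truncation; your $1+x^2/(1+\varepsilon x^2)$ and $x$-mollification play the role of the paper's $x^2\chi_n^2$ and $H^1$--$H^{-1}$ pairing.

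The step that fails is the claim that $\mathcal{R}\in L^{2,1}$ ``holds since $\theta>0$''. The leading part $A_0c_0|x|^{-2\theta}$ of $\Gamma(t,x)$ is independent of $t$, so the time derivatives in \eqref{def:R} only see the correction term in \eqref{1.5}. The estimates \eqref{1.5000} and \eqref{28oc2L} describe exactly the combinations $\theta\Omega+\frac{z}{2}\Omega'$ and $(\theta+\frac12)\Omega'+\frac{z}{2}\Omega''$ produced by $\partial_t$. By them, $\partial_t\Gamma$ and $\partial_{tt}\Gamma$ are of size $|x|^{-2\theta-2\varsigma}$ with $\varsigma=\min(1,\theta(p-1))$, which is slower than $|x|^{-2\theta-2}$ whenever $\theta(p-1)<1$. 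You also dropped the term $(b(t)-b_0(1+t)^{\beta})\partial_t\Gamma$.

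Putting $x\,\partial_t\Gamma$ and $x\,\partial_{tt}\Gamma$ in $L^2$ near infinity therefore requires $\theta+\varsigma>\frac34$. For $\theta(p-1)<\frac14$ this reads $\theta p>\frac34$. That follows neither from $\theta>0$ nor from $\theta>\frac12$; it is precisely the right-hand inequality of Assumption (III). So the source term in your weighted Gronwall argument is not justified as written. The repair is the case analysis in the proof of Lemma \ref{lem:est:h}, which the paper invokes at exactly this point; this is where Assumption (III) actually enters the lemma. For (i) only $\mathcal{R}\in H^1$ is needed, and $|x|^{-2\theta-2\varsigma}\in L^2$ near infinity since $\theta>\frac12$, so that conclusion stands despite the wrong decay rate.

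Two smaller corrections. First, the weight $w_\varepsilon=1+x^2/(1+\varepsilon x^2)$ does not have a first derivative bounded uniformly in $\varepsilon$, since $w_\varepsilon'=2x(1+\varepsilon x^2)^{-2}$. What you need, and what holds, is $|w_\varepsilon'|\le 2\sqrt{w_\varepsilon}$ and $|w_\varepsilon''|\le 10$ uniformly in $\varepsilon$, the analogues of \eqref{eq:lwp:dxchi1}--\eqref{eq:lwp:dxchi2}. With these, terms such as $\int w_\varepsilon'\,\tilde u_{xxx}\tilde u_t\,\mathrm{d}x$ are controlled as you say.

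Second, weak lower semicontinuity only puts the $H^2\times L^2$-limit of a sequence in $X_T$ into $L^\infty(I;H^3\times H^1)$, not into $\mathcal{C}(I;H^3\times H^1)$. So $X_T$ is not obviously complete. Run the contraction in $L^\infty(I;H^3\times H^1)\cap\mathcal{C}(I;H^2\times L^2)$ and recover continuity with values in $H^3\times H^1$ from the Duhamel formula afterwards. The paper is equally terse on this point.
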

\begin{proof}{}
\noindent
\textbf{(Step 1)}
Let $T_0 > t_0$ be arbitrary fixed.
Let $T \in (t_0, T_0)$ and $I = [t_0, T]$. 
Let us first show the existence of the solution in the space
$\mathcal{C}(I ; H^3(\R) \times H^1(\R))$.
Let $\delta$ be determined later and we define
\[
	\mathcal{M}_{\delta} := \{ U \in \mathcal{C}(I, H^3(\R) \times H^1(\R)) \,;\,
						\sup_{t\in I} \| U(t) \|_{H^3 \times H^1} \le 2 \delta \}.
\]
Then, $\mathcal{M}_{\delta}$ is a complete metric space with the metric
$d (U, V) = \sup_{t\in I} \| U(t) - V(t) \|_{H^2 \times L^2}$.
\par
For $U \in \mathcal{M}_{\delta}$, we define the mapping
\[
	\Psi[U] (t) := \mathcal{T}(t) U_0 + \int_{t_0}^t \mathcal{T}(t-s) B(s; U(s)) \,\mathrm{d}s,
	\quad t \in [t_0,T].
\]
Then, we will see that $\Psi$ is a contraction mapping on $\mathcal{M}_{\delta}$,
provided that $T$ is sufficiently close to $t_0$.
Indeed, since
$\mathcal{T}(t)$
is continuous on $H^2(\R) \times L^2(\R)$ and commutes with the derivative $\partial_x$,
it is continuous on $H^3(\R) \times H^1(\R)$
with the estimate
$\| \mathcal{T}(t) U_0 \|_{H^3 \times H^1} \le C_{T_0} \| U_0 \|_{H^3 \times H^1}$.
Thus, we have
\begin{align}\label{eq:Phi}
	\| \Psi[U] (t) \|_{H^3 \times H^1}
	&\le
	C_{T_0} \| U_0 \|_{H^3 \times H^1} \\
	&\quad
	+ C_{T_0} \int_{t_0}^t \| a(\tau) \partial_{xx} \tilde{u}(\tau) - b(\tau) \partial_t \tilde{u}(\tau)
				+\mathcal{G}(\tilde{u}) + \mathcal{R} \|_{H^1} \,\mathrm{d}\tau.
\end{align}
Now, we define $\delta := C_{T_0} \| U_0 \|_{H^3\times H^1}$.
In order to estimate the right-hand side, we first apply the Taylor expansion to obtain
\begin{align}\label{eq:Gtildeu}
	\mathcal{G}(\tilde{u})
	&=
	- \left( | \tilde{u} + \Gamma |^{p-1} (\tilde{u} + \Gamma) - |\Gamma|^{p-1} \Gamma \right)
	=
	- \tilde{u} \int_0^1 p | \eta \tilde{u} + \Gamma|^{p-1} \,\mathrm{d} \eta.
\end{align}
Then, noting
$| \eta \tilde{u} + \Gamma|^{p-1} 
\le (\| \tilde{u} \|_{L^{\infty}} + \| \Gamma \|_{L^{\infty}})^{p-1}
\le C_T ( \| \tilde{u} \|_{H^1} + 1)^{p-1}$
for $t \in I$,
we have
$\| \mathcal{G}(\tilde{u}) \|_{L^2} \le C_T \| \tilde{u} \|_{L^2} ( \| \tilde{u} \|_{H^1} + 1)^{p-1}
\le C_T \delta (1+\delta)^{p-1}$,
where, the constant
$C_T$
depends on
$\sup_{t \in I} \| \Gamma(t) \|_{L^{\infty}}$.
Here, we also remark that $\Gamma(t)$ does not belong to $L^2(\R)$ in general
(see \eqref{1.5}),
and this is the reason to consider the difference in the definition of $\mathcal{G}(\tilde{u})$.
Next, we estimate the derivative
$\partial_x \mathcal{G}(\tilde{u})$.
Noting
\begin{align*}
	\partial_x \mathcal{G}(\tilde{u})
	&= - p | \tilde{u} + \Gamma|^{p-1} (\partial_x \tilde{u} + \partial_x \Gamma)
	+ p |\Gamma|^{p-1} \partial_x \Gamma.
\end{align*}
and
$ \| \partial_x \Gamma (t) \|_{L^2} < \infty$ for $t \in [t_0, T]$,
which follows from Lemma \ref{L3bis1},
one can see that 
\begin{align*}
	\| \partial_x \mathcal{G}(\tilde{u}) \|_{L^2}
	&\le
	C \| \tilde{u} + \Gamma \|_{L^{\infty}}^{p-1}
		\left( \| \partial_x \tilde{u} \|_{L^2} + \| \partial_x \Gamma \|_{L^2} \right)
	+ C \| \Gamma \|_{L^{\infty}}^{p-1} \| \partial_x \Gamma \|_{L^2} \\ 
	&\le
	C_T \left( (1+\delta)^{p} + 1 \right).
\end{align*}
Moreover, by Lemma \ref{L3bis1},
$\| \mathcal{R} \|_{H^1}$
is easily estimated by a constant $C_T$.
Applying the above inequalities to \eqref{eq:Phi}, we have
\begin{align*}
	&\int_{t_0}^t \| a(\tau) \partial_{xx} \tilde{u}(\tau) - b(\tau) \partial_t \tilde{u}(\tau)
				+\mathcal{G}(\tilde{u}) + \mathcal{R} \|_{H^1} \,\mathrm{d}\tau \\
	&\le
	C_T \cdot (T-t_0) \left( \delta + \delta(1+\delta)^{p-1} + (1+\delta)^{p} + 1 \right),
\end{align*}
where $C_T$ depends on
$\sup_{t \in I} (\| \Gamma(t) \|_{L^{\infty}} + \| \partial_x \Gamma(t) \|_{L^{2}})$
and
$\sup_{t\in I} ( a(t) + b(t) )$.
Thus, the mapping $\Psi$ maps $\mathcal{M}_{\delta}$ into itself
if $T$ is sufficiently close to $t_0$ so that
$C_T \cdot (T-t_0) \left( \delta + \delta(1+\delta)^{p-1} + (1+\delta)^{p} + 1 \right) < \delta$
holds.

Next, we estimate
$d ( \Psi[U], \Psi[V] )$
for 
$U = {}^t (\tilde{u}, \partial_t \tilde{u}), V={}^t (\tilde{v}, \partial_t \tilde{v}) \in \mathcal{M}_{\delta}$.
In the same way as \eqref{eq:Phi}, we first have
\[
	\| \Psi[U](t) - \Psi[V](t) \|_{H^2 \times L^2}
	\le
	C_0 \int_{t_0}^t
		\| a(\tau) (\partial_{xx} \tilde{u} - \partial_{xx} \tilde{v})
			+ b(\tau) (\partial_{t} \tilde{u} - \partial_{t} \tilde{v})
			+ \mathcal{G}(\tilde{u}) - \mathcal{G}(\tilde{v}) \|_{L^2} \,\mathrm{d}\tau.
\]
The definition of $\mathcal{G}(\tilde{u})$ leads to
\[
	\mathcal{G}(\tilde{u}) - \mathcal{G}(\tilde{v})
	= - | \tilde{u} + \Gamma |^{p-1} (\tilde{u} + \Gamma)
	+  | \tilde{v} + \Gamma |^{p-1} (\tilde{v} + \Gamma)
	= p (\tilde{v}-\tilde{u}) \int_0^1
	\left| (1-\eta) \tilde{u} + \eta \tilde{v} + \Gamma \right|^{p-1} \,\mathrm{d}\eta.
\]
From this, we can show that for $U, V \in \mathcal{M}_{\delta}$,
\begin{equation}\label{eq:lwp:diff}
	\| \Psi[U] (t) - \Psi[V] (t) \|_{H^2 \times L^2}
	\le C_T T (1+\delta)^{p-1} \sup_{\tau\in I} \| U(\tau) - V(\tau) \|_{H^2 \times L^2},
\end{equation}
where $C_T$ depends on
$\sup_{t \in I} \| \Gamma(t) \|_{L^{\infty}}$
and
$\sup_{t\in I} ( a(t) + b(t) )$.
Therefore, if $T$ is sufficiently close to $t_0$ depending on $\delta$ and
$\sup_{t \in I} \| \Gamma(t) \|_{L^{\infty}}$, $\sup_{I} ( a(t) + b(t) )$,
then the mapping $\Psi$ is contractive on $\mathcal{M}_{\delta}$.
Here, we remark that if $p$ is close to $1$, then
the nonlinear function $u \mapsto |u|^{p-1}u$ is not of $\mathcal{C}^2$ and 
we cannot expect a good estimate for
$\| |u|^{p-1}u - |v|^{p-1}v \|_{H^1}$.
This is the reason why we used the $H^2 \times L^2$ norm for the metric of $\mathcal{M}_{\delta}$.
Consequently, the Banach fixed point theorem implies that there exists a mild solution
$U$ in
$\mathcal{C}(I ; H^3(\R) \times H^1(\R))$.

The uniqueness in
$\mathcal{C}(I ; H^3(\R) \times H^1(\R))$
immediately follows from
the estimate \eqref{eq:lwp:diff} above.
The continuous dependence on the initial data in
$\mathcal{C}(I ; H^2(\R) \times L^2(\R))$
is obtained in the same way as the uniqueness, and we omit the detail.
Moreover, the blow-up alternative follows from a standard contradiction argument.
\par
\noindent
\textbf{(Step 2)}
We prove the boundedness of weighted norm.
Let  $(\tilde{u}_0, \tilde{u}_1) \in Z^1(\R)$,
and
let $U$ be the corresponding mild solution on $[t_0,T]$
to the initial data $U_0$.
Let
$M := \sup_{t\in I} \| U(t) \|_{H^{3}(\R)\times H^{1}(\R)}$.
Note that, differentiating \eqref{eq:sol}, we see that
$U(t)$ satisfies the differential equation \eqref{syst} in
$\mathcal{C}(I, H^1(\R) \times H^{-1}(\R))$.

Now, we set 
\begin{align*}
    &\chi \in \mathcal{C}_0^{\infty} (\mathbb{R}),\quad
    0\le \chi \le 1, \quad
    \chi(x) = \begin{cases} 1 & \textrm {if} \ |x|\le 1,\\ 0 & \textrm {if}\ |x| \ge 2, \end{cases}\\
    &\chi_n(x) := \chi \left( \frac{x}{n} \right) \quad
    (n \in \mathbb{N}).
\end{align*}
By
$\mathrm{supp}\, \chi_n \subset [-2n, 2n]$,
we easily see that
\begin{align}
\label{eq:lwp:dxchi1}
    &| \partial_x (x^2 \chi_n(x)^2 ) |
    =
    \left| 2 x \chi_n(x)^2 + 2 \frac{x^2}{n} \chi' \left( \frac{x}{n} \right) \chi_n(x) \right|
    \le C |x| \chi_n(x),\\
\label{eq:lwp:dxchi2}
    &| \partial_{xx} (x^2 \chi_n(x)^2 ) | \\
\nonumber
    &\quad =
    \left|
    2 \chi_n(x)^2 + 4 \frac{x}{n} \chi' \left( \frac{x}{n} \right) \chi_n(x)
    + 2 \frac{x^2}{n^2} \left( \left( \chi'\left(\frac{x}{n}\right)\right)^2 + \chi''\left( \frac{x}{n} \right) \chi_n(x) \right)
    \right| \\
\nonumber
    &\quad \le C, 
\end{align}
with some constant $C>0$.
Consider
\begin{align*}
	E_n (t; \tilde{u})
	&:= \int_{\mathbb{R}} x^2 \chi_n(x)^2
	\left( |\partial_t \tilde{u}(t,x)|^2
		+ a(t) |\partial_x \tilde{u}(t,x)|^2
		+ |\partial_{xx} \tilde{u}(t,x)|^2
		+ |\tilde{u}(t,x)|^2 \right) \,\mathrm{d}x,\\
	E (t; \tilde{u})
	&:= \int_{\mathbb{R}} x^2
		\left( |\partial_t \tilde{u}(t,x)|^2
			+ a(t) |\partial_x \tilde{u}(t,x)|^2
			+ |\partial_{xx} \tilde{u}(t,x)|^2
			+ | \tilde{u}(t,x)|^2 \right) \,\mathrm{d}x.
\end{align*}
Note that $E_n(t; \tilde{u})$ is finite thanks to
$\chi_n$.
Differentiating it, we have
\begin{align*}
    \frac{d}{dt} E_n (t; \tilde{u})
    &= 2 \int_{\mathbb{R}} x^2 \chi_n(x)^2
    \left(  \partial_t \tilde{u} \partial_{tt} \tilde{u}
    + a(t) \partial_x \tilde{u} \partial_t \partial_x \tilde{u}
    + \partial_{xx} \tilde{u} \partial_t \partial_{xx} \tilde{u}
    \right) \,\mathrm{d}x \\
    &    + 2 \int_{\mathbb{R}} x^2 \chi_n(x)^2 \tilde{u} \partial_t \tilde{u} \,\mathrm{d}x
    	+ \int_{\mathbb{R}} x^2 \chi_n(x)^2 a'(t) |\partial_x \tilde{u} |^2 \,\mathrm{d}x.
\end{align*}
By the integration by parts and  using the equation \eqref{syst},
the right-hand side can be written as
\begin{align*}
    &2 \int_{\mathbb{R}} x^2 \chi_n(x)^2 \partial_t \tilde{u}
    \left( -b(t) \partial_t \tilde{u} + \mathcal{G}(\tilde{u}) + \mathcal{R} \right) \,\mathrm{d}x \\
    &\quad
    	-2 \int_{\mathbb{R}} \partial_x (x^2 \chi_n(x)^2 )
		a(t) \partial_x \tilde{u} \partial_t \tilde{u} \,\mathrm{d}x \\
    &\quad
    	+ 4 \int_{\mathbb{R}}  \partial_x (x^2 \chi_n(x)^2 )
		\partial_x^3 \tilde{u} \partial_t \tilde{u} \,\mathrm{d}x
    +2 \int_{\mathbb{R}}  \partial_{xx} (x^2 \chi_n(x)^2 )
    	\partial_x^2 \tilde{u} \partial_t \tilde{u} \,\mathrm{d}x \\
    &\quad    + 2 \int_{\mathbb{R}} x^2 \chi_n(x)^2 \tilde{u} \partial_t \tilde{u} \,\mathrm{d}x
    	+ \int_{\mathbb{R}} x^2 \chi_n(x)^2 a'(t) |\partial_x \tilde{u} |^2 \,\mathrm{d}x.
\end{align*}
Here, we remark that the integral including $\partial_x^4 u$ makes sense as
$\langle x^2 \chi_n(x)^2 \partial_t u, \partial_x^4 u \rangle_{H^1, H^{-1}}$
and the above computation is justified.
By using \eqref{eq:lwp:dxchi1} and \eqref{eq:lwp:dxchi2}
and applying the Schwarz inequality,
the last five terms and the term including $-b(t)\partial_t \tilde{u}$
can be further estimated by
\[
	C (2M)^2 + C_{T,a,b} E_n (t; \tilde{u})
\]
with some constant $C, C_{T,a,b} > 0$.
Moreover, thanks to the expression \eqref{eq:Gtildeu} and the Schwarz inequality,
the term including
$\mathcal{G}(\tilde{u})$
can be estimated by
\begin{align*}
	&C \left( \int_{\mathbb{R}} x^2 \chi_n(x)^2 |\partial_t \tilde{u}|^2 \,\mathrm{d}x \right)^{1/2}
	\left( \int_{\mathbb{R}} x^2 \chi_n(x)^2 \left( \| \tilde{u} \|_{L^{\infty}} + \| \Gamma \|_{L^{\infty}} \right)^{p-1}
		| \tilde{u} |^2 \,\mathrm{d}x \right)^{1/2} \\
	&\le C_{T, M, \Gamma} E_n(t; \tilde{u})
\end{align*}
with some
$C_{T, M, \Gamma} >0$.
Finally, in the same way as the derivation of the estimate \eqref{h},
which will be proved later,
it can be seen that the term including $\mathcal{R}$ is estimated by
\begin{align*}
	C \int_{\mathbb{R}} x^2 \chi_n(x)^2 |\partial_t \tilde{u}|^2 \,\mathrm{d}x
	+ C \int_{\mathbb{R}} x^2 \chi_n(x)^2 |\mathcal{R}|^2 \,\mathrm{d}x
	\le C_{T, \Gamma} (1+ E_n(t; \tilde{u}))
\end{align*}
with some $C_{T, \Gamma}>0$ independent of $n$.

Consequently, we obtain the estimate for $t \in I$,
\[
	\frac{d}{dt} E_n (t; \tilde{u})
	\le C_{T, a, b, M, \Gamma} (1+ E_n(t; \tilde{u}))
\]
and hence, the Gronwall inequality implies
\[
    E_n (t; \tilde{u} ) \le \tilde{C}_{T,a,b,M, \Gamma} E_n(t_0; \tilde{u})
\]
for $t \in I$,
where the constant $\tilde{C}_{T,a,b,M,\Gamma}$
is independent of $n$.
Then, letting $n \to \infty$, we conclude
\[
    E (t; \tilde{u} ) \le \tilde{C}_{T,a,b,M,\Gamma} E (t_0; \tilde{u}),
\]
which shows the boundedness of the norm $\| \tilde{u}(t), \partial_t \tilde{u}(t)) \|_{Z^1}$ 
for any $t \in I$.
The continuity of $( \tilde{u}(t), \partial_t \tilde{u}(t))$
with respect to the variable $t$ in the topology of $Z^1(\R)$
follows from the estimate
\begin{align}
    | E_n(t; \tilde{u} ) - E_n (s; \tilde{u}) |
    &\le
    \int_s^t \left| \frac{d}{d\sigma} E_n(\sigma ; \tilde{u}) \right| \,d\sigma 
    \le
    C_{T,a,b,M,\Gamma} (t-s)
\end{align}
for $s < t$
and taking the limit $n\to \infty$.
This proves
$( \tilde{u}, \partial_t \tilde{u}) \in \mathcal{C}(I, Z^1(\R))$
and concludes the proof of Lemma \ref{local1}.
\end{proof}

In the new variables $(s, y)$, Lemma \ref{local1} becomes:
\begin{lem}\label{local2}
Suppose \textbf{Assumptions (I), (II), (III)}.
\begin{itemize}
\item[(i)] \textup{(Existence, uniqueness and continuous dependence)}
 Let 
$\delta>0 $ be given.
There exists $ s_{\max}>0 $ such that, for all  $
(f_0,g_0) \in  H^{3}(\R)\times H^{1}(\R) $, with $ \|(f_{0}, g_{0})\|_{H^{3}\times H^{1}}<\delta $,
the equation \eqref{scaling} has a unique (mild) solution
 $ (f(s),g(s))
\in {\mathcal C}([s_0,s_{\max}),H^{3}(\R) \times H^{1}(\R)) $ with $
(f(s_0),g(s_0))=(f_0,g_0) $. The solution $
(f(s), g(s)) $ depends continuously on
the initial data in 
$\mathcal{C}([s_0, s_{\max}) , H^{2}(\R) \times L^2(\R))$.

\item[(ii)] \textup{(Blow-up alternative)}
Let $s_{\max}$ be the lifespan of the local solution defined by
\[
	s_{\max} := \sup \{ s \in [s_0, \infty) ;\,
	\exists ! (f, g) \in \mathcal{C}([s_0, s], H^3(\R)\times H^1(\R)) \ \text{: mild solution to \eqref{scaling}} \}.
\]
If $s_{\max} < + \infty$, then we have
\[
	\lim_{s \to s_{\max}-0} \| (f, g)(s) \|_{H^3 \times H^1} = + \infty. 
\]

\item[(iii)] \textup{(Boundedness of the weighted norm)}
If $(f_0, g_1) \in Z^1(\R)$ and
$(f,g) \in \mathcal{C}([s_0, s_{\max}), H^3(\R) \times H^1(\R))$
is the corresponding mild solution to \eqref{scaling}.
Then,
$(f,g) \in \mathcal{C} ([s_0, s_{\max}), Z^1(\R))$.

\end{itemize}
\end{lem}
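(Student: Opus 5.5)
Lemma \ref{local2} follows by transporting Lemma \ref{local1} through the change of variables \eqref{scaling0}, \eqref{scaling1}, \eqref{1.14}. No new fixed-point argument is needed. The map
\[
	t \mapsto s = \log(1+R(t))
\]
is a $\mathcal{C}^1$ diffeomorphism of $[t_0,\infty)$ onto $[s_0,\infty)$, because $R' = r = a/b > 0$. So every statement on a time interval $[t_0,T]$ corresponds to one on $[s_0, \log(1+R(T))]$.

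\textbf{Step 1: the correspondence $(\tilde u,\partial_t\tilde u) \leftrightarrow (f,g)$.} Since $u = \tilde u + \Gamma$, and $\Gamma$ corresponds exactly to $v = \Omega$ and $w = -\theta\Omega - \frac{y}{2}\Omega'$ up to the factors $(1+R)/R_0$, we get
\[
	f(s,y) = \frac{e^{\theta s}}{A_0}\,\tilde u\bigl(t, y e^{s/2}\bigr) + \bigl(\text{explicit term from } \Gamma - \tfrac{A_0}{(1+R)^\theta}\Omega\bigr),
\]
and similarly
\[
	g = \frac{e^{(\theta+1)s}}{A_0 r}\,\partial_t\tilde u + (\text{explicit term}).
\]
These explicit correction terms are smooth in $s$. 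By Lemma \ref{L3bis1} and \eqref{ab}, they are small perturbations lying in $H^3 \times H^1$ and in $Z^1$, and they are continuous in $s$. (In the paper's normalization $h$ absorbs them, so one may take $\Gamma$ consistent with $R$.)

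For fixed $s$, the map $\phi \mapsto \phi(\,\cdot\, e^{s/2})$ is an isomorphism of $H^k$ and of the weighted spaces $H^{k,\ell}$. Its norms are bounded by powers of $e^{\pm s/2}$, hence uniformly on compact $s$-intervals. It is also strongly continuous in $s$ (check on $C_0^\infty$, then use density). Multiplication by the continuous positive factors $e^{\theta s}/A_0$ and $e^{(\theta+1)s}/(A_0 r)$ preserves all of this.

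\textbf{Step 2: transfer each part.}
\begin{itemize}
\item Part (i): smallness of $(f_0,g_0)$ in $H^3\times H^1$ gives $(\tilde u_0,\tilde u_1)\in H^3\times H^1$. Lemma \ref{local1}(i) then produces the mild solution. Its image $(f,g)$ lies in $\mathcal{C}([s_0,s_{\max}),H^3\times H^1)$ by the strong continuity in Step 1. A direct computation, the same one that derived \eqref{vw}--\eqref{scaling}, shows that $(f,g)$ is a mild solution of \eqref{scaling}; more precisely, the mild formulation is defined as the pullback of \eqref{eq:sol}. Uniqueness and continuous dependence in $H^2\times L^2$ transfer because the map is bi-continuous, uniformly on compact intervals.
\item Part (ii): if $s_{\max}<\infty$, then $T_{\max}=R^{-1}(e^{s_{\max}}-1)<\infty$. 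Because the transformation norms stay bounded near $s_{\max}$, the blow-up of $\|U\|_{H^3\times H^1}$ in Lemma \ref{local1}(ii) is equivalent to the blow-up of $\|(f,g)\|_{H^3\times H^1}$.
\item Part (iii): this follows from Lemma \ref{local1}(iii) in the same way, since the weighted norms transform equivalently.
\end{itemize}

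\textbf{Main obstacle.} The only delicate point is the strong continuity in $s$ of the dilation $\phi\mapsto\phi(\,\cdot\, e^{s/2})$ on the weighted spaces $Z^1$. I will prove it by a density argument combined with the uniform operator bound on compact $s$-intervals. A secondary point is checking that the explicit $\Gamma$-correction terms belong to $Z^1$. This uses the tail asymptotics of Lemma \ref{L3bis1}, under the same \textbf{Assumption (III)} that places $h$ in $Y^1$.
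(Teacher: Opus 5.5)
Your proposal is correct and follows the paper's route: the paper gives no separate argument and obtains Lemma~\ref{local2} simply by rewriting Lemma~\ref{local1} in the variables \eqref{scaling0}, \eqref{scaling1}, \eqref{1.14}, which is exactly your transfer argument. Your care about the mismatch between $R_0(t)$ in the definition of $\Gamma$ and $1+R(t)$ in the scaling addresses a point the paper glosses over, and the correction term does lie in $Z^1$: writing $\lambda^2=(1+R)/R_0$, one has $\lambda^{2\theta}\Omega(\lambda y)-\Omega(y)=2\int_1^{\lambda}\mu^{2\theta-1}\bigl(\theta\Omega+\tfrac{z}{2}\Omega'\bigr)(\mu y)\,\mathrm{d}\mu$, so the leading tails $c_0|y|^{-2\theta}$ (which are not in $L^{2,1}$ since $\theta<\frac34$) cancel, and \eqref{1.5000} together with \textbf{Assumption (III)}, which gives $\theta+\varsigma>\frac34$, finishes the argument.
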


\bibliographystyle{plain}

\end{document}